\documentclass[10pt]{amsart}

\usepackage{amsmath}
\usepackage{amsfonts}
\usepackage{amssymb}
\usepackage{amsthm}
\usepackage{comment}
\usepackage{epsfig}
\usepackage{psfrag}
\usepackage{mathrsfs}
\usepackage{amscd}
\usepackage[all]{xy}
\usepackage{rotating}
\usepackage{lscape}
\usepackage{amsbsy}
\usepackage{verbatim}
\usepackage{moreverb}
\usepackage{sseq}
\usepackage{pdfpages}
\usepackage{enumerate}
\usepackage{tikz-cd}
\usetikzlibrary{decorations.pathmorphing}
\usepackage{float}
\restylefloat{table}
\usepackage{centernot, mathtools, stmaryrd}

\usepackage{hyperref}
\hypersetup{%
  bookmarksnumbered=true,%
  bookmarks=true,%
  colorlinks=true,%
  linkcolor=black,%
  citecolor=black,%
  filecolor=black,%
  menucolor=black,%
  pagecolor=black,%
  urlcolor=black,%
  pdfnewwindow=black,%
  pdfstartview=FitBH}

\usepackage{url}
\usepackage{todonotes}
\newcommand{\wt}[1]{\widetilde{#1}}

\newcommand{\BPCn}{BP^{(\!(C_{2^n})\!)}}
\newcommand{\BPCnone}{BP^{(\!(C_{2^{n-1}})\!)}}
\newcommand{\MUCn}{MU^{(\!(C_{2^n})\!)}}

\newcommand{\MUG}{MU^{(\!(G)\!)}}
\newcommand{\BPG}{BP^{(\!(G)\!)}}
\newcommand{\BPCeight}{BP^{(\!(C_8)\!)}}
\newcommand{\MUCeight}{MU^{(\!(C_8)\!)}}
\newcommand{\MUR}{MU_\mathbb{R}}
\newcommand{\BPR}{BP_\mathbb{R}}
\newcommand{\MU}{MU}

\DeclareMathOperator{\SliceSS}{\text{SliceSS}}
\DeclareMathOperator{\HFPSS}{\text{HFPSS}}

\newtheorem{theorem}{Theorem}[section]
\newtheorem*{theorem*}{Theorem}
\newtheorem*{claim*}{Claim}
\newtheorem{proposition}[theorem]{Proposition}
\newtheorem{lemma}[theorem]{Lemma}
\newtheorem{cor}[theorem]{Corollary}

\newcommand{\Rc}{\mathcal{R}}
\newcommand{\Rn}{\mathcal{R}_n}
\newcommand{\Fc}{\mathcal{F}}

\newcommand{\Rnone}{\mathcal{R}_{n-1}}
\newcommand{\Rone}{\mathcal{R}_1}

\newcommand{\Rr}{\mathcal{R}_r}
\newcommand{\Rnper}{\mathcal{R}_n^{\per}}
\newcommand{\Roneper}{\mathcal{R}_1^{\per}}

\theoremstyle{definition}
\newtheorem{definition}[theorem]{Definition}

\newtheorem{notation}[theorem]{Notation}

\newtheorem{remark}[theorem]{Remark}

\newcommand{\id}{\mathrm{id}}

\newcommand{\Tor}{\mathrm{Tor}}

\newcommand{\Gal}{\mathrm{Gal}}

\DeclareMathOperator{\smsh}{\wedge}
\newcommand{\G}{\mathbb{G}}

\DeclareMathOperator{\Aut}{Aut}

\newcommand{\Z}{\mathbb{Z}}

\newcommand{\R}{\mathbb{R}}
\newcommand{\F}{\mathbb{F}}



\newcommand{\cM}{\mathcal{M}}





\newcommand{\Q}{{\mathbb{Q}}}
\renewcommand{\to}{\longrightarrow}

\newcommand{\per}{\mathrm{per}}

\newcommand{\Ckm}{C(k,m)}
\newcommand{\Gkm}{G(k,m)}
\newcommand{\Gk}{G(k)}
\newcommand{\Rkm}{R(k,m)}
\newcommand{\Rk}{R(k)}
\newcommand{\Rnm}{\mathcal{R}_n\langle m\rangle}

\usepackage{soul}

\numberwithin{equation}{section}

\author[Beaudry]{Agn\`es Beaudry}
\address{Department of Mathematics, University of Colorado Boulder, Campus Box 395, Boulder, CO 80309-0395}
\author[Hill]{Michael A. Hill}
\address{Department of Mathematics, University of California Los Angeles, 520 Portola Plaza, Los Angeles, CA 90095}
\author[Shi]{XiaoLin Danny Shi}
\address{Department of Mathematics, University of Chicago, 5734 S University Ave, Chicago, IL 60637}
\author[Zeng]{Mingcong Zeng}
\address{Mathematical institute, Utrecht University, Budapestlaan 6, 3584 CD Utrecht, The Netherlands}

\thanks{This material is based upon work supported by the National Science Foundation under grants No.~DMS--1811189 and DMS--1906227.
The authors also thank the Isaac Newton Institute for Mathematical Sciences for support and hospitality during the program Homotopy Harnessing Higher Structures. This work was supported by EPSRC Grant Number EP/R014604/1.}

\title{Models of Lubin--Tate spectra via Real bordism theory}
\date{}

\begin{document}

\begin{abstract}
We study certain formal group laws equipped with an action of the cyclic group of order a power of $2$. We construct $C_{2^n}$-equivariant Real oriented models of Lubin--Tate spectra $E_h$ at heights $h=2^{n-1}m$ and give explicit formulas of the $C_{2^n}$-action on their coefficient rings.
Our construction utilizes equivariant formal group laws associated with the norms of the Real bordism theory $MU_{\mathbb{R}}$, and our work examines the height of the formal group laws of the Hill--Hopkins--Ravenel norms of  $MU_{\mathbb{R}}$.
\end{abstract}
\maketitle
\setcounter{tocdepth}{1}
\tableofcontents



\section{Introduction}\label{sec:intro}
\subsection{Overview.} The results of this paper fall into two categories. First, we prove a series of algebraic results on formal group laws with group actions.  Then, we use these results to construct equivariant refinements of spectra that play a central role in chromatic homotopy theory.
\begin{itemize}
\item \emph{Algebraic results.} We study a family of formal group laws $\Gamma_h$ of height $h=2^{n-1}m$ over a finite field $k$ of characteristic $2$ and certain universal deformations $F_h$. Such formal group laws come equipped with an action of $C_{2^n}$ and we describe deformation parameters that allow for an explicit description of the action of $C_{2^n}$ on the associated Lubin--Tate ring. This reproduces an unpublished result of Hill--Hopkins--Ravenel \cite{OWS} which presents the Lubin--Tate ring with its structure of a $C_{2^n}$-module as the completion of a periodization of the symmetric algebra on a sum of copies of the induced sign representation of $C_{2^n}$. We also incorporate the action of roots of unity in $k$ as part of our narrative.
\item \emph{Topological results.} With the formal group law $\Gamma_h$ and its universal deformation $F_h$ in hand, we obtain a Lubin--Tate spectrum $E(k,\Gamma_h)$. These are ``higher chromatic analogues'' of topological $K$-theory and the group actions we study are higher analogues of Adams operations. Our approach automatically gives us formulas for the action of $C_{2^n}$ on $\pi_*E(k,\Gamma_h)$. We then upgrade $E(k,\Gamma_h)$ to an equivariant spectrum receiving a map from $N_{C_2}^{C_{2^n}} BP_{\R}$, the Hill--Hopkins--Ravenel norm of the Real Brown--Peterson spectrum $BP_{\R}$. The effect of the map on underlying homotopy groups is clear and this opens the door for generalizations of \cite{HHRC4, c4e4} which use the slice spectral sequence to study the homotopy fixed points of $E(k,\Gamma_h)$. These homotopy fixed point spectra play a central role in the chromatic approach to stable homotopy theory. Next, we study the chromatic height of truncations of $N_{C_2}^{C_{2^n}} BP_{\R}$. Although we do not make this precise in this paper, this is data necessary for a study of the height filtration of a ``moduli stack of formal group laws equipped with group actions''.
\end{itemize}
In the rest of this introduction, we will describe our motivation for this project (which comes from homotopy theory) and state our main results.

\subsection{Motivation}

Topological $K$-theory is a remarkably useful cohomology theory that has produced important homotopy-theoretic invariants in topology.
Deep facts in topology have been proved using topological $K$-theory.  Most famously perhaps, Adams used the $K$-theory of real projective spaces together with the action of the Adams operations to resolve the vector fields on spheres problem \cite{AdamsVectorFields}.  Adams and Atiyah also used $K$-theory to give a simpler solution to the Hopf invariant one problem \cite{AdamsAtiyahHopfInvariant}, first solved by Adams in \cite{AdamsHopfInvariant}.

Atiyah \cite{AtiyahKR} describes a version of vector bundles motivated by Galois descent for \(\mathbb C\) over \(\mathbb R\).  The complex conjugation action on complex vector bundles induces a natural $C_2$-action on $KU$. In fact, this corresponds to the action of the Adams operations $(\pm 1)$. Under this action, the $C_2$-homotopy fixed points of $KU$ is $KO$.
Furthermore, there is a homotopy fixed points spectral sequence computing the homotopy groups of $KO$, starting from the action of $C_2$ on the homotopy groups of $KU$.
The spectrum $KU$, equipped with this $C_2$-action and considered as a $C_2$-spectrum, is called  Real $K$-theory $K_\mathbb{R}$.

The main topic of this paper is to construct generalizations of $K_{\mathbb{R}}$, namely Lubin--Tate theories, with explicit actions of higher Adams operations. Our construction of these theories and actions is inspired by the work of Hill, Hopkins and Ravenel \cite{HHR} and makes heavy use of the Real bordism spectrum, which we now introduce.

Conner--Floyd connected complex \(K\)-theory to complex cobordism \(MU^\ast\) \cite{ConnerFloyd}, showing that the Todd genus induces an isomorphism
\[
K^\ast(X)\cong MU^\ast(X)\otimes_{MU^\ast}\mathbb Z.
\]
This refines to a map of spectra \(MU\to KU\).

Early work on $MU$ due to Milnor \cite{Milnor60}, Novikov \cite{Novikov60, Novikov62, Novikov67}, and Quillen \cite{Quillen69} established the complex bordism spectrum as a critical tool in modern stable homotopy theory, with connections to algebraic geometry and number theory through the theory of formal groups. In this language, the map \(MU\to KU\) classifies the multiplicative formal group over \(\mathbb Z\).

Analogously as in the case of $KU$, the complex conjugation action on complex manifolds induces a natural $C_2$-action on $MU$.  This action produces the Real bordism spectrum $MU_\mathbb{R}$ of Landweber \cite{LandweberMUR} and Fujii \cite{FujiiMUR}, studied extensively by Araki \cite{Araki} and by Hu--Kriz \cite{HuKriz}. The underlying spectrum of $MU_\mathbb{R}$ is $MU$, with the $C_2$-action given by complex conjugation.

Complex conjugation acts on $KU$ and $MU$ by coherently commutative ($E_{\infty}$) maps, making $K_\mathbb{R}$ and $MU_\mathbb{R}$ commutative $C_2$-spectra.  The Conner--Floyd map is compatible with the complex conjugation action, and it can be refined to a \textit{Real orientation}, i.e., a $C_2$-equivariant ring map
\[
MU_\mathbb{R} \longrightarrow K_\mathbb{R}.
\]

The spectrum $MU_\mathbb{R} $ is at the roots of the techniques used in Hill, Hopkins and Ravenel's solution to the Kervaire invariant one problem  \cite{HHR}. Since the appearance of these results, there has been an incredible amount of development in equivariant stable homotopy theory.

The techniques of Hill--Hopkins--Ravenel are intimately tied to a subfield of homotopy theory called chromatic homotopy theory.
Chromatic homotopy theory is a powerful tool which studies periodic phenomena in stable homotopy theory by analyzing the algebraic geometry of smooth one-parameter formal groups.  More precisely, the moduli stack of formal groups has a stratification by height $h\geq 0$, which corresponds in the stable homotopy category to localizations with respect to generalizations of the complex $K$-theory spectrum. These are the Lubin--Tate theories $E_h$, also often called the Morava $E$-theories.

As the height increases, this stratification carries increasingly more information about the stable homotopy category, but also becomes harder to understand. Therefore, it is crucial to study higher structures of these spectra, for example, the associated cohomology operations. At all heights $h$, there is a group of cohomology operations generalizing the stable Adams operations on $p$-completed $K$-theory. This group is called the Morava stabilizer group  $\mathbb{G}_h$.

In this paper, we focus our attention at the prime $p=2$ and study the height of spectra obtained from the Hill--Hopkins--Ravenel norms of $MU_{\R}$. Using this, we construct equivariant Real oriented models of  Lubin--Tate spectra $E_h$ with explicit formulas for the actions of finite subgroups of $\mathbb{G}_h$ on their coefficient rings. This is the input needed to determine the $E_2$-pages of the corresponding homotopy fixed points spectral sequences,  which in turn compute the homotopy groups of higher real $K$-theory spectra. These are periodic spectra that generalize the real $K$-theory spectrum $KO$. The connection between our Lubin--Tate theories and $MU_{\R}$ also provides information about differentials in the homotopy fixed points spectral sequences.

Periodic spectra such as the higher real $K$-theories also play a central role in modern detection theorems. These are results about families in the stable homotopy groups of spheres obtained by studying the Hurewicz homomorphisms of these periodic spectra \cite{RavenelOdd,HHR,LiShiWangXu}.

More specifically, let $(k, \Gamma_h)$ be the pair consisting of a finite field $k$ of characteristic 2 and a fixed height-$h$ formal group law $\Gamma_h$ defined over $k$.  Lubin and Tate \cite{lubintate} showed that the pair $(k, \Gamma_h)$ admits a universal deformation $F_h$ defined over a complete local ring with residue field $k$.  This ring is abstractly isomorphic to
\begin{align}\label{eq:classicLTring}
R(k, \Gamma_h):=W(k) [\![ u_1, \ldots, u_{h-1}]\!][u^{\pm 1}].\end{align}
Here, $W(k)$ is the 2-typical Witt vectors of $k$, $|u_i| = 0$, and $|u|=2$.

The Morava stabilizer group $\mathbb{G}(k, \Gamma_h)$ is the group of automorphisms of $(k, \Gamma_h)$ (Definition~\ref{defn:moravastab}).  By the universality of the deformation $(R(k, \Gamma_h), F_h)$ and naturality, there is an action of $\mathbb{G}(k, \Gamma_h)$ on $R(k, \Gamma_h)$.

The group $\mathbb{G}(k, \Gamma_h)$ always contains a subgroup of order two, corresponding to the automorphism $[-1]_{\Gamma_h}(x)$ of $\Gamma_h$.  This $C_2$ subgroup is central in $\mathbb{G}(k, \Gamma_h)$.  Hewett~\cite{Hewett} showed that if $h = 2^{n-1} m$, then there is a subgroup of the Morava stabilizer group isomorphic to $C_{2^n}$ that contains this central $C_2$ subgroup.  Furthermore, if $m$ is odd, then this $C_{2^n}$-subgroup is a maximal finite 2-subgroup in $\mathbb{G}(k, \Gamma_h)$.  See also \cite{bujard}.

The formal group law $F_h$ is classified by a map
\[MU_* \longrightarrow R(k, \Gamma_h),\]
which is Landweber exact (\cite[Section 5]{rezk}).  A Lubin--Tate spectrum $E(k, \Gamma_h)$ is a complex oriented ring spectrum with $\pi_*E(k, \Gamma_h)   =R(k, \Gamma_h)$ whose formal group law is $F_h$.  Topologically, the action of $\mathbb{G}(k, \Gamma_h)$ on $\pi_*E(k, \Gamma_h)$ can be lifted as well.  The Goerss--Hopkins--Miller theorem \cite{rezk, GHMTheorem} shows that $E(k, \Gamma_h)$ is a complex orientable $E_{\infty}$-ring spectrum with a \emph{continuous} action of $\mathbb{G}(k, \Gamma_h)$ by maps of $E_{\infty}$-ring spectra which refines the action of $\mathbb{G}(k, \Gamma_h)$ on $\pi_*E(k, \Gamma_h)$. By a continuous action here, we mean in the sense of Devinatz--Hopkins \cite{DevinatzHopkins,BBGS}.

Now, let $G$ be a finite subgroup of $\mathbb{G}(k, \Gamma_h)$. Classically, the homotopy fixed points spectrum $E(k, \Gamma_h)^{hG}$ is computed by using the homotopy fixed points spectral sequence.  However, at height $h > 2$ (and $p=2$), the spectrum $E(k, \Gamma_h)^{hG}$ is very difficult to compute: given an arbitrary Lubin--Tate spectrum $E(k, \Gamma_h)$, a general formula describing the action of $G$ on $\pi_*E(k, \Gamma_h)$ is not known.  As a result of this, it is hard to compute the $E_2$-page of its homotopy fixed points spectral sequence.  Even worse, the $G$-action on the spectrum $E(k, \Gamma_h)$ is constructed purely from obstruction theory \cite{rezk, GHMTheorem}, so there is no systematic method to compute differentials in the homotopy fixed points spectral sequence.

A major motivation for our work, which arises in \cite{c4e4, e2c4}, is to construct models of Lubin--Tate spectra as equivariant spectra with explicit group actions.  Our construction presents $\pi_* E(k, \Gamma_h)$ explicitly as an $C_{2^n}$-algebra.  As a result, our construction renders the spectra $E(k, \Gamma_h)^{hC_{2^n}}$ accessible to computations via equivariant techniques developed by Hill, Hopkins, and Ravenel \cite{HHR}.

\subsection{Main results}\label{sec:mainresults}
The motivation behind our constructions of Lubin--Tate spectra are equivariant spectra constructed by Hill, Hopkins, and Ravenel in their solution of the Kervaire invariant one problem \cite{HHR}.  To give precise statements of our results and to motivate our proofs, we recall some constructions from \cite{HHR} and use this as an opportunity to introduce some of our notations.

A key construction in Hill--Hopkins--Ravenel's proof of the Kervaire invariant one problem is the detecting spectrum $\Omega$. This spectrum detects all the Kervaire invariant elements in the sense that if $\theta_j \in \pi_{2^{j+1}-2} S^0$ is an element of Kervaire invariant 1, then the Hurewicz image of $\theta_j$ under the map $\pi_*S^0 \to \pi_*\Omega$ is nonzero (see also \cite{HaynesKervaire, HHRCDM1, HHRCDM2} for surveys on the result).

The detecting spectrum $\Omega$ is constructed using equivariant homotopy theory as the fixed points of a $C_8$-spectrum $\Omega_\mathbb{O}$, which in turn is a chromatic-type localization of $\MUCeight := N_{C_2}^{C_8} \MUR$.  Here, $N_{C_2}^{C_8}(-)$ is the Hill--Hopkins--Ravenel norm functor.

Let $\BPR$ be the Real Brown--Peterson spectrum, obtained from the Real bordism spectrum $\MUR$ by the Quillen idempotent (see \cite[Theorem~2.33]{HuKriz} and \cite[Theorem 7.14]{Araki}).  Let $\mathcal{F}$ be the universal 2-typical formal group law over $\pi_*^e \BPR = \pi_*BP$. For $C_{2^n}$ the cyclic group of order $2^n$ with generator $\gamma_n$ we can form the spectrum
$$\BPCn := N_{C_2}^{C_{2^n}} \BPR,$$
and we let
$$\Rn := \pi_*^e \BPCn.$$

For $n \geq 2$, the group $C_{2^n}$ contains a unique subgroup of order $2^{n-1}$ whose generator we will call $\gamma_{n-1}:= \gamma_n^2$.  The maps
\[\eta_L \colon \BPCnone \to i^*_{C_{2^{n-1}}} \BPCn \simeq \BPCnone \smsh \BPCnone\]
induce ring inclusions
\begin{equation}\label{eq:incl}
\begin{tikzcd}
\Rnone \ar[r, hook]& \Rn
\end{tikzcd}
 \end{equation}
which are equivariant with respect to the $C_{2^{n-1}}$-action.  The formal group law $\Fc$, which is originally defined over $\Rone$, can also be viewed as a formal group law over $\Rn$ for each $n \geq 1$ via the ring inclusions $\Rone \hookrightarrow \Rn$.

In fact, we can use the formal group law $\Fc$ to specify generators for $\Rn$ as follows.  For a group $G$ acting on a ring $R$, we use
\[f_g \colon R \to R\]
to denote the ring automorphism that specifies the action of $g$ on $R$. 
For every $n$, there is a canonical strict isomorphism
\[\psi_{\gamma_{n}} \colon \Fc \to \Fc^{\gamma_n}\]
where $\Fc^{\gamma_n}$ is the formal group law obtained from $\Fc$ by applying the automorphism $f_{\gamma_n}$ of $\Rn$ to the coefficients of $\Fc$.  
Since the formal group laws are $2$-typical, the strict isomorphism $\psi_{\gamma_{n}}$ admits the form
\begin{align}\label{eq:psi}
\psi_{\gamma_{n}}(x) = x + \sum_{i\geq 1}{}^{\Fc^{\gamma_{n} }} t_i^{C_{2^n}}x^{2^i},
\end{align}
where $t_i^{C_{2^n}} \in \pi_{2(2^i-1)}^e \BPCn$ for all $i \geq 1$.

It follows from \cite[Section~5.4]{HHR} that the elements $t_i^{C_{2^n}}$, $i \geq 1$ form a set of $C_{2^n}$-algebra generators for $\Rn$.  More precisely, as a $C_{2^n}$-algebra,
\[\Rn  \cong \Z_{(2)} [C_{2^n} \cdot t_1^{C_{2^n}}, C_{2^n} \cdot t_2^{C_{2^n}}, \ldots]\]
where the notation $C_{2^n} \cdot x$ represents the set
\[C_{2^n} \cdot x:= \{x, \gamma_n x, \gamma_n^2 x, \gamma_n^3 x, \ldots, \gamma_n^{2^{n-1}-1}x\}\] with $2^{n-1}$ elements whose degrees are all equal to $|x|$.

The $C_{2^n}$-action on the generators $t_i^{C_{2^n}}$ is specified by the formulas
\[f_{\gamma_n} ( \gamma_n^j t_i^{C_{2^n}})= \left\{\begin{array}{ll} \gamma_n^{j+1} t_i^{C_{2^n}} & j < 2^{n-1} -1 \\
-t_i^{C_{2^n}} & j = 2^{n-1} -1. \end{array} \right. \]
Using the inclusions \eqref{eq:incl}, we may view the generators $C_{2^r} \cdot t_i^{C_{2^r}}$ of $\mathcal{R}_r$ as elements of $\Rn$ for every $r \leq n$.

The underlying spectra of $\MUCn$ and $\BPCn$ are smash products of $2^{n-1}$-copies of $MU$ and $BP$ respectively.  As result of \cite[Proposition~11.28]{HHR}, $C_{2^n}$-equivariant maps from the underlying homotopy of $\MUCn$ (resp. $\BPCn$) to a graded $C_{2^n}$-equivariant commutative ring $R$ are in bijection with formal group laws (resp. 2-typical formal group laws) $F$ over $R$ that are equipped with strict isomorphisms
$$\psi_{\gamma_n^{i+1}}: \Fc^{{\gamma_n^i}} \longrightarrow \Fc^{\gamma_n^{(i+1)}}, \hspace{0.2in} 0 \leq i \leq 2^{n-1}-1$$
with $\psi_{\gamma_n^{i+1}} = (\gamma_n^i)^*\psi_{\gamma_n}$ such that the composition of all the $\psi_{\gamma_n^{i+1}}$'s is the formal inversion on $\Fc$.

Associated to the universal deformation $(E(k, \Gamma_h)_*, F_h)$ and the action of the generator $\gamma_n \in C_{2^n}$, there is a $C_{2^n}$-equivariant map
$$\pi_*^e \BPCn \longrightarrow E(k, \Gamma_h)_*.$$

As is customary, let $v_i \in \pi_{2(2^i-1)} BP$ be the Araki generators, so that
\[{[}2{]}_{\Fc}(x) =  \sum_{i\geq 0}{}^{\Fc} v_i x^{2^i}.\]
Recall that a $2$-typical formal group law $H$ over a $\Z_{(2)}$-algebra $S$ is classified by a map
\[ BP_* \cong \Z_{(2)}[v_1, v_2, \ldots] \to S.\]
It is said to have height $h$ if there is a unit $\lambda \in S$ such that
\[[2]_{H}(x) =  \lambda \cdot x^{p^h} + \text{higher order terms} \hspace{0.1in} \mod(2,v_1, \ldots, v_{h-1}).\]

The main result of this paper arises from an observation of Hill--Hopkins--Ravenel \cite{HHR} that the formal group law $\Fc$ over $\pi_*^e \BPCeight$ should be of height $h=4$ after inverting some carefully chosen element $D$ \cite[Section~11.2]{HHR}. More generally, for
\[h=2^{n-1}m,\]
where $m\geq 1$ is any natural number, then there is an element $D$ so that $\Fc$ has height $h$ over $\pi_*^eD^{-1}BP^{(\!(C_{2^n})\!)}$. In practice, choosing an appropriate element $D$ to invert appears to become tedious when $m$ in $h=2^{n-1}m$ is large.  This is not hard when $n=2$ and $m=1$ \cite[(9.3)]{HHRC4}, but already becomes tricky when $n=2$ and $m=2$ \cite[Theorem 1.1]{c4e4}.

Studying the height of a formal group law over $R$ is done by studying the image of the elements $v_i$ in $R$.
One reason that this is difficult in our case is that the image of the $v_i$'s in $\pi_*^e \BPCn$ are given by intricate formulas in terms of the generators $t_i^{C_{2^n}}$. For example, when $n=2$, $\pi_*^e\BPCn \cong BP_*BP$ and giving formulas for the $v_i$'s is equivalent to giving formulas for the conjugation on the Hopf algebroid. This is essentially the task of giving an analogue of Milnor's formula in $\mathcal{A}_* = (H\F_2)_*H\F_2$ which relates the generators $\xi_i$ with their conjugates, but in the case of $BP$ instead of $H\F_2$. One contribution of this paper and a key result is to give these formulas in terms of explicit, clean recursive relations.
\begin{theorem}\label{theorem:Mform}
For every $n\geq 2$ and $k \geq 1$,
\begin{eqnarray}
t_k^{C_{2^{n-1}}}
&\equiv& t_k^{C_{2^n}} + \gamma_n t_k^{C_{2^n}} + \sum_{j=1}^{k-1}\gamma_{n}{t}_{j}^{C_{2^n}} ({t}_{k-j}^{C_{2^n}})^{2^j} \pmod{I_k}\label{eq:tkrecursionformula}
\end{eqnarray}
where $I_k =(2,v_1, \ldots, v_{k-1})$.
\end{theorem}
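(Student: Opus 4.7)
The plan is to exploit the identification $\gamma_{n-1}=\gamma_n^2$ and factor the canonical strict isomorphism $\psi_{\gamma_{n-1}}\colon \Fc\to \Fc^{\gamma_{n-1}}$ through $\Fc^{\gamma_n}$. From the universal property \cite[Proposition~11.28]{HHR} applied to the restriction from $C_{2^n}$ to its subgroup $C_{2^{n-1}}$, the canonical strict isomorphism $\psi_{\gamma_{n-1}}$, viewed in $\Rn[\![x]\!]$ via the inclusion $\Rnone\hookrightarrow \Rn$, coincides with the composite
\[
\Fc\xrightarrow{\psi_{\gamma_n}} \Fc^{\gamma_n} \xrightarrow{\psi_{\gamma_n^2}} \Fc^{\gamma_n^2}=\Fc^{\gamma_{n-1}},
\]
where $\psi_{\gamma_n^2}=\gamma_n^\ast\psi_{\gamma_n}$ is obtained by applying $\gamma_n$ to the coefficients. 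Comparing the coefficient of $x^{2^k}$ on both sides of this identity modulo $I_k$ will yield \eqref{eq:tkrecursionformula}.

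The key simplification is that modulo $I_k$ the formal group law behaves additively up to high degree. Writing $\Fc(x,y)=x+y+\sum_{i,j\geq 1}a_{ij}\,x^iy^j$ with $a_{ij}\in BP_\ast=\Rone$, the degree identity $|a_{ij}|=2(i+j-1)$ together with $|v_\ell|=2(2^\ell-1)$ forces any monomial in $v_\ell$'s with $\ell\geq k$ to have degree at least $2(2^k-1)$, so $a_{ij}\in I_k$ whenever $i+j<2^k$. Since $I_k$ is a classical invariant prime ideal, it is stable under the $C_{2^n}$-action, and the same conclusion applies to $\Fc^{\gamma_n}$ and $\Fc^{\gamma_n^2}$. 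Iteratively unpacking the nested $\Fc^{\gamma_n}$-sum defining $\psi_{\gamma_n}$, each FGL correction term $a_{ij}^{\gamma_n}\cdot x^i R(x)^j$ (with $i,j\geq 1$) is either in $I_k$ (when $i+j<2^k$) or has $x$-degree $\geq i+2j=(i+j)+j\geq 2^k+1$ (when $i+j\geq 2^k$). Therefore
\[
\psi_{\gamma_n}(x)\equiv x + t_1^{C_{2^n}}x^2 + t_2^{C_{2^n}}x^4 + \cdots + t_k^{C_{2^n}}x^{2^k}\pmod{I_k,\ x^{2^k+1}},
\]
with analogous approximations for $\psi_{\gamma_n^2}$ (replacing each $t_i^{C_{2^n}}$ by $\gamma_n t_i^{C_{2^n}}$) and for $\psi_{\gamma_{n-1}}$ (with $t_i^{C_{2^{n-1}}}$).

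Extracting the coefficient of $x^{2^k}$ from both sides of the composition identity modulo $I_k$, the left-hand side contributes $t_k^{C_{2^{n-1}}}$. For the right-hand side, applying the approximation of $\psi_{\gamma_n^2}$ to $y=\psi_{\gamma_n}(x)$ and bounding the FGL corrections as before gives
\[
\psi_{\gamma_n^2}(\psi_{\gamma_n}(x))\equiv \psi_{\gamma_n}(x) + \sum_{i=1}^{k}(\gamma_n t_i^{C_{2^n}})\,\psi_{\gamma_n}(x)^{2^i}\pmod{I_k,\ x^{2^k+1}}.
\]
The Frobenius in characteristic $2$ (implied by $I_k\supset(2)$) then gives $\psi_{\gamma_n}(x)^{2^i}\equiv x^{2^i}+\sum_{j\geq 1}(t_j^{C_{2^n}})^{2^i}x^{2^{i+j}}\pmod{I_k}$, so reading off the coefficient of $x^{2^k}$ produces $t_k^{C_{2^n}}$ (from the linear term $\psi_{\gamma_n}(x)$), $\gamma_n t_k^{C_{2^n}}$ (from $i=k$), and $(\gamma_n t_j^{C_{2^n}})(t_{k-j}^{C_{2^n}})^{2^j}$ for $1\leq j\leq k-1$ (from $i=j$, using $i+j=k$ in the Frobenius expansion). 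Summing these terms matches the right-hand side of \eqref{eq:tkrecursionformula}. The main technical hurdle is the careful justification of the composition identity at the level of power series in $\Rn[\![x]\!]$ via the naturality of the canonical strict isomorphisms under the restriction from $C_{2^n}$ to $C_{2^{n-1}}$; once this is established, the rest of the proof is a bookkeeping argument using degree bounds and Frobenius.
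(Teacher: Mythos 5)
Your argument is correct, but it takes a genuinely different route from the paper. The paper never truncates the formal group law; instead it passes to $\Rn\otimes\Q$, introduces the logarithm coefficients $\ell_i$, proves that $\ell_i$ has denominator exactly $2^i$, and works modulo the $\Rn$-submodules $\gamma_{n-1}L_k$ generated by $\{2,\gamma_{n-1}\ell_1,\ldots,\gamma_{n-1}\ell_{k-1}\}$; the identity is first proved modulo $\gamma_{n-1}L_k$ (Proposition~\ref{prop:Mform}, by repeatedly substituting the relations of Proposition~\ref{prop:keyformula}, which are themselves the logarithmic shadow of your composition identity $\psi_{\gamma_{n-1}}=(\gamma_n^*\psi_{\gamma_n})\circ\psi_{\gamma_n}$), and then descended to $I_k$ via the computation $\gamma_{n-1}L_k\cap\Rn=I_k$ (Corollary~\ref{cor:MkcapRn}). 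You instead work integrally, killing the formal-group correction terms $a_{ij}$ by the degree bound $|a_{ij}|=2(i+j-1)<2(2^k-1)$ when $i+j<2^k$ and by $x$-adic degree otherwise; this is cleaner and avoids rational coefficients entirely. What you lose is self-containedness at one point: your reduction for $\Fc^{\gamma_n}$ and $\Fc^{\gamma_n^2}$ requires $\gamma_n(I_k)\subseteq I_k$ in $\Rn$ (so that $\gamma_n a_{ij}\in I_k\Rn$), which you dismiss as ``classical.'' This is exactly Proposition~\ref{prop:danny2} of the paper, which is proved there by the same logarithmic machinery (Propositions~\ref{prop:elldenom}--\ref{prop:reducelk-glk}); it does follow from the Landweber/Morava invariance of $I_k$ in $BP_*BP$ together with the fact that the conjugation preserves $I_k$, but since $\gamma_n$ on $\Rn$ is a cyclic permutation of smash factors twisted by complex conjugation, you should either cite that assembly explicitly or import Proposition~\ref{prop:danny2}. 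The composition identity you flag as the main hurdle is not one: it is exactly the coherence encoded by \cite[Proposition~11.28]{HHR} and the $C_{2^{n-1}}$-equivariance of $\eta_L\colon\Rnone\hookrightarrow\Rn$, and the paper uses the same fact implicitly when it applies Equation~\eqref{eq:3.5eq1} with $n$ replaced by $n-1$ over $\Rn$.
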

Section~\ref{sec:formulas} is dedicated to the proof of this result, which involves a detailed analysis of the relationship between the $v_i$ and the $t_i^{C_{2^r}}$ generators. An important result contained in this section is Proposition~\ref{prop:danny2}, which states that the ideals $I_k \subseteq \Rn$ are preserved by the action of $C_{2^n}$.

With this in hand, we are ready to study the chromatic filtration arising from $BP^{(\!(C_{2^n})\!)}$.  In analogy with the truncation $BP\langle h \rangle$ of the Brown--Peterson spectrum $BP$, one can form equivariant quotients
\[ BP^{(\!(C_{2^n})\!)} \langle m \rangle :=BP^{(\!(C_{2^n})\!)} /( C_{2^n} \cdot \bar t_{m+1},C_{2^n} \cdot \bar t_{m+2}, \ldots  ). \]
The quotient here is done by using the method of twisted monoid rings \cite[Section 2.4]{HHR}.  Let
\[\Rnm := \pi_*^e  BP^{(\!(C_{2^n})\!)} \langle m \rangle \cong \Z_{(2)}[C_{2^n} \cdot t_1^{C_{2^n}},\ldots, C_{2^n}  \cdot  t_m^{C_{2^n}} ].\]
The left unit induces a map
\[ BP_* \to  \Rnm   \]
and so $\Rnm $ carries a formal group law which we will continue to denote by $\Fc$.

Our paper studies the height of the spectra $BP^{(\!(C_{2^n})\!)} \langle m \rangle$ and prove that they are of height $h = 2^{n-1}m$. The first step of our analysis is completely algebraic and consists of studying the image of the $v_i$ generators in $\Rnm $.

\begin{theorem}\label{thm:listofawesomeness}
There is an element $D \in \Rnm $ such that
\begin{itemize}
\item $v_h$ divides $D$ in $\Rnm$,
\item $(2, v_1, v_2, \ldots, v_h)$ is a regular sequence in $D^{-1}\Rnm$,
\item $v_r \in I_r$ for $r>h$,
\item $D^{-1}\Rnm/I_h \cong \F_2[(t_m^{C_{2^n}})^{\pm 1}]$ with $v_h = t_m^{(2^h-1)/(2^m-1)}$, and
\item the formal group law $\Fc$ has height exactly $h$ over $D^{-1}\Rnm/I_h $.
\end{itemize}
\end{theorem}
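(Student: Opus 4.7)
The plan is to compute, modulo $I_h$, the image of $v_h$ in $\Rnm$ as an explicit norm-type product in the orbit of $\bar t_m := t_m^{C_{2^n}}$, and then to take $D$ as a suitable refinement of this product. The starting point is the classical Araki identity in $\Rone = BP_*BP$ expressing each $v_r$ modulo $I_r$ as a polynomial in the $t_i^{C_2}$. I would then iterate Theorem~\ref{theorem:Mform} down the tower $C_2 \subset C_4 \subset \cdots \subset C_{2^n}$: at stage $s$, each $t_k^{C_{2^{s-1}}}$ is replaced modulo $I_k$ by $t_k^{C_{2^s}} + \gamma_s t_k^{C_{2^s}}$ plus quadratic corrections in lower $t_j^{C_{2^s}}$. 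Proposition~\ref{prop:danny2} (the $C_{2^n}$-invariance of $I_k$) ensures that these substitutions are well-defined modulo the correct ideals.

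After $n-1$ rounds of substitution, $v_r$ modulo $I_r$ is expressed in the orbit of $\bar t_1,\ldots,\bar t_r$. Passing to $\Rnm$ kills $\bar t_j$ for $j>m$, and a degree count then gives the third bullet: for $r>h=2^{n-1}m$, the expression for $v_r$ must vanish, since insufficient orbit generators of low enough degree are available to realize something in degree $2(2^r-1)$. For $r=h$ the degrees match exactly; the iteration produces a leading norm-type term $N=\prod_{i=0}^{2^{n-1}-1}(\gamma_n^i \bar t_m)^{e_i}$, with exponents $e_i$ forced by the Frobenius twists picked up at each stage, plus corrections in $I_h$. Taking $D$ to be (a lift to $\Rnm$ of) this product, the first bullet is obtained by identifying $v_h$ as the top coefficient of a norm factorization of the Araki $2$-series $[2]_{\Fc}(x)=\sum_{i\geq 0}^{\Fc} v_i x^{2^i}$. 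After inverting $D$, each $\gamma_n^i\bar t_m$ becomes a unit; since $D^{-1}\Rnm/I_h$ is a graded $\F_2$-algebra in which all conjugates of $\bar t_m$ share a single degree and are invertible, a degree comparison identifies them with $\bar t_m$ up to constants, yielding $D^{-1}\Rnm/I_h\cong \F_2[\bar t_m^{\pm 1}]$ with $v_h = \bar t_m^{(2^h-1)/(2^m-1)}$ (bullet four). The regular sequence claim then follows inductively from the leading-monomial structure of each $v_k$ modulo $(2,v_1,\ldots,v_{k-1})$ in the localized polynomial algebra $D^{-1}\Rnm$, and the height claim is immediate from bullet four.

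The hard part is controlling the proliferation of correction terms across the $n-1$ iterated substitutions: one must verify inductively that these corrections lie in strictly deeper ideals so that the norm-type leading term survives intact, and must also identify the exponents $e_i$ explicitly enough that degrees balance. In parallel, the divisibility $v_h\mid D$ must be upgraded from a statement modulo $I_h$ to an exact factorization in $\Rnm$, which requires lifting the Araki $2$-series identity to the norm structure on $\BPCn$.
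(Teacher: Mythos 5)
Your outline is in the right spirit --- iterating Theorem~\ref{theorem:Mform} down the tower is indeed how the paper extracts information about the $v_r$ --- but the two places where you defer the work are exactly where the theorem lives, and the justifications you do give are not valid. First, the ``degree count'' for the third bullet does not work: the degree $2(2^r-1)$ is realized by plenty of monomials in $C_{2^n}\cdot t_1,\ldots,C_{2^n}\cdot t_m$ (e.g.\ $t_1^{2^r-1}$), so nothing is forced to vanish on degree grounds. The actual mechanism (Lemma~\ref{lem:heightBPbracketsm}) is an index-halving argument: in the recursion for $t_r^{C_{2^{n-k-1}}}$, every cross term $\gamma t_j\,(t_{r-j})^{2^j}$ contains a factor $t_i^{C_{2^{n-k}}}$ with $i\geq r/2$, so an upward induction on $k$ shows $t_r^{C_{2^{n-k}}}\in I_r$ once $r>2^km$. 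Second, and more seriously, the regularity of $(2,v_1,\ldots,v_h)$ and the identification of $D^{-1}\Rnm/I_h$ rest on your claim that a ``leading norm-type term survives intact'' through $n-1$ rounds of substitution --- which you yourself flag as the hard part and do not prove. The paper does not control these correction terms combinatorially at all. Instead it passes to the completed extension $\Rkm$, proves by a double induction (Proposition~\ref{prop:m=Ih}, the most technical argument of the paper) that $I_h$ equals the maximal ideal $\mathfrak{m}$, and then gets the regular sequence from a Krull dimension count rather than from leading monomials. To transport this back to the uncompleted ring $D^{-1}\Rnm$, the element $D$ must contain, besides the norms of the orbit of the $\bar t_{2^{n-i}m}^{C_{2^i}}$, the norms of all the auxiliary elements (such as $(t_k^{C_{2^r}})^{2^{k-1}}+\gamma_r t_k^{C_{2^r}}(t_{k-1}^{C_{2^r}})^{2^k-1}$) that are units in the complete local ring for free but not in $\Rnm$. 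Your $D$ --- a single norm-type product in the conjugates of $\bar t_m$ --- is too small for the ideal computation to go through without completing.

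Finally, the exact divisibility $v_h\mid D$ in $\Rnm$, which you propose to obtain by ``lifting the Araki $2$-series identity to the norm structure,'' is handled in the paper by a much more pedestrian device: since the Araki generators are only canonical modulo $I_k$ and every chromatic statement in the theorem is insensitive to the choice, the paper simply redefines $v_k:=t_k^{C_2}$ (Equation~\eqref{eq:redefinevk}) and includes $N_{C_2}^{C_{2^n}}(\bar t_h^{C_2})$ as a literal factor of $D$, so that $v_h=t_h^{C_2}$ divides $i_e^*D$ by construction. With the honest Araki generators the exact factorization you are after would be genuinely delicate, and you have not supplied it. In summary: the skeleton (recursion down the tower, norm-type product, inversion) matches the paper, but the three load-bearing steps --- the vanishing for $r>h$, the regularity, and the divisibility --- each need a different argument from the one you sketch.
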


The results of Theorem~\ref{thm:listofawesomeness}, discussed in Propositions~\ref{prop:IhIGBP} and \ref{prop:heightBPbracketsm} below, are actually proved by passing to a completion of an extension of $\Rnm$.
For $k$ a finite field of characteristic 2, we let $W(k)$ be the ring of Witt vectors on $k$. Let
\begin{align*}
\Rkm:=W(k) [C_{2^n} \cdot t_1^{C_{2^n}}, \ldots, C_{2^n} \cdot t_{m-1}^{C_{2^n}}, C_{2^{n}} \cdot u ] [C_{2^{n}}\cdot u^{-1} ]^{\wedge}_\mathfrak{m}.
\end{align*} where $|t_i^{C_{2^n}}| = 2(2^i-1)$ for $1 \leq i \leq m-1$ and $|u| =2$.  The ideal $\mathfrak{m}$ is given by
\[ \mathfrak{m}=(C_{2^n}\cdot t_1^{C_{2^n}}, \ldots, C_{2^n}\cdot t_{m-1}^{C_{2^n}}, C_{2^n}\cdot (u-\gamma_n u)  ).\]
Note that the ring $\Rkm$ is an unramified extension of a completion of the ring $\Rnm[C_{2^n} \cdot (t_m^{C_{2^n}})^{-1}]$. The action of $C_{2^n}$ on $\Rkm$ is dictated by the notation.

The advantage of working in $\Rkm$ is that it is a complete local ring whose Krull dimension is easily determined. The following, which is Proposition~\ref{prop:m=Ih} below, is the most technical argument of the paper and is the main goal of Section~\ref{sec:proofs-alg}.
\begin{theorem}\label{thm:keyintrotricky}
In $\Rkm$, the ideal $I_h = (2, v_1, \ldots, v_{h-1})$ is equal to the the maximal ideal $\mathfrak{m} = (C_{2^n} \cdot t_1^{C_{2^n}}, \ldots, C_{2^n} \cdot t_{m-1}^{C_{2^n}}, C_{2^n}\cdot (u-\gamma_n u))$.
\end{theorem}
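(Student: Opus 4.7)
The plan is to prove $I_h = \mathfrak{m}$ by exploiting the fact that $\Rkm$, as the $\mathfrak{m}$-adic completion of an unramified Witt-vector extension of a localization of a polynomial ring in $h$ variables, is a complete regular local ring of Krull dimension $h = 2^{n-1} m$; consequently $\mathfrak{m}/\mathfrak{m}^2$ is an $h$-dimensional $k$-vector space with basis
\[ \{[\gamma_n^j t_i^{C_{2^n}}] : 1 \leq i \leq m-1, 0 \leq j < 2^{n-1}\} \cup \{[\gamma_n^j u - \gamma_n^{j+1} u] : 0 \leq j < 2^{n-1}\}. \]
Both $I_h$ and $\mathfrak{m}$ come with exactly $h$ listed generators, so by Nakayama's lemma it suffices to prove (i) $I_h \subseteq \mathfrak{m}$, and (ii) the classes of $2, v_1, \ldots, v_{h-1}$ span $\mathfrak{m}/\mathfrak{m}^2$.

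For (i), the telescope $\sum_{i=0}^{2^{n-1}-1}(\gamma_n^i u - \gamma_n^{i+1} u) = u - \gamma_n^{2^{n-1}} u = 2u$ lies in $\mathfrak{m}$, and since $u$ is a unit this forces $2 \in \mathfrak{m}$. I then prove $v_i \in \mathfrak{m}$ for $1 \leq i \leq h-1$ by strong induction on $i$. Assuming $I_i \subseteq \mathfrak{m}$ and using the $C_{2^n}$-invariance of $I_i$ (Proposition \ref{prop:danny2}), I iterate Theorem \ref{theorem:Mform} from $C_2$ up to $C_{2^n}$ to express $v_i = t_i^{C_2}$ modulo $I_i$ as a polynomial in the $C_{2^n}$-orbit of $\{t_k^{C_{2^n}} : k \leq m\}$, using that $t_k^{C_{2^n}} = 0$ in $\Rnm$ for $k > m$. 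Each surviving monomial either contains a factor $\gamma_n^j t_k^{C_{2^n}}$ with $k < m$ (automatically in $\mathfrak{m}$), or is an integer combination of conjugates of $u$ whose image in $k$ is a multiple of $2^{n-1}$, which vanishes because $2 \in \mathfrak{m}$.

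For (ii), I reduce the iterated recursion modulo $\mathfrak{m}^2$: every correction term $\gamma_n t_j^{C_{2^n}}(t_{k-j}^{C_{2^n}})^{2^j}$ in which both factors lie in $\mathfrak{m}$ collapses, and only products of the form (unit-in-$u$-conjugates) times a single $\mathfrak{m}$-generator survive. Writing $k = \ell + rm$ with $1 \leq \ell \leq m$ and $0 \leq r < 2^{n-1}$, the class $[v_k]$ lands in the ``$\ell$-block'' of $\mathfrak{m}/\mathfrak{m}^2$ spanned by the $C_{2^n}$-orbit of $[t_\ell^{C_{2^n}}]$ when $\ell < m$ (respectively, the ``$u$-block'' spanned by $\{[\gamma_n^j u - \gamma_n^{j+1} u]\}$ when $\ell = m$); moreover $[2]$ is a unit multiple of $\sum_j [\gamma_n^j u - \gamma_n^{j+1} u]$ in the $u$-block. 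This yields a block decomposition of the transition matrix into $m$ square $2^{n-1} \times 2^{n-1}$ blocks, one for each residue class $\ell \bmod m$.

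The main obstacle is verifying that each of these blocks is invertible over $k$. Each row $[v_{\ell + rm}]$ gets contributions only from branches of the iterated recursion that ``strip off'' exactly $r$ copies of $t_m^{C_{2^n}} = u$ via the correction terms, producing coefficients that are specific powers of $\bar u \in k^\times$ on a specific subset of basis elements $[\gamma_n^a t_\ell^{C_{2^n}}]$ (or $[\gamma_n^a u - \gamma_n^{a+1} u]$) depending on $r$. In small cases one checks directly that, after suitably ordering rows by $r$ and columns by the index $a$, each block is upper-triangular with nonzero diagonal entries of the form $\bar u^{N_r}$ for an increasing sequence of exponents $N_r$, giving non-vanishing determinant; the technical heart is to make this combinatorial structure precise for arbitrary $n$ and $m$, which requires carefully enumerating which nested branches of the recursion survive modulo $\mathfrak{m}^2$ and computing the exact exponents of $\bar u$ they contribute. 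This is the content of Section \ref{sec:proofs-alg}.
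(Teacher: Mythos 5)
Your reduction via Nakayama is sound in principle: $\Rkm$ is a (graded) complete local ring, $\mathfrak{m}$ is generated by a regular sequence of length $h$, so if $I_h\subseteq\mathfrak{m}$ and the classes of $2,v_1,\dots,v_{h-1}$ span $\mathfrak{m}/\mathfrak{m}^2$, then $I_h=\mathfrak{m}$. Part (i) is essentially fine (it amounts to the chain $I_{C_2}\subseteq I_{C_4}\subseteq\cdots\subseteq I_{C_{2^n}}=\mathfrak{m}$ that follows from Theorem~\ref{theorem:Mform} together with $2\in\mathfrak{m}$, though your parenthetical ``multiple of $2^{n-1}$'' should just be ``even''). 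The problem is part (ii), which carries the entire content of the theorem and which you do not prove. You reduce it to the invertibility of $m$ blocks of size $2^{n-1}\times 2^{n-1}$ over $k$, check ``small cases,'' assert an upper-triangular structure, and then defer ``the technical heart'' to Section~\ref{sec:proofs-alg} --- i.e.\ to the result you are supposed to be proving. That is a genuine gap, not a routine verification.

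Moreover, the combinatorial structure you assert is already false in the first nontrivial case beyond your examples. Take $n=3$, $m=1$, $h=4$, and write $w_j=\gamma_3^j u$ and $a_j=[w_j-w_{j+1}]\in\mathfrak{m}/\mathfrak{m}^2$ for $0\le j\le 3$. Iterating Theorem~\ref{theorem:Mform} twice and reducing modulo $\mathfrak{m}^2$ one finds (over $\F_2$, up to unit scalars and up to the span of earlier rows) $[2]\doteq a_0+a_1+a_2+a_3$, $[v_1]\doteq a_0+a_2$, $[v_2]\doteq a_0+a_3$, $[v_3]\doteq a_0+a_1+a_3$. This matrix is invertible, but it is not triangular in the basis $\{a_j\}$ under any ordering of rows and columns, and the pattern of which $a_j$ appear depends on delicate mod-$2$ arithmetic of the multinomial expansions (each row is computed by linearizing a difference of monomials in the $w_j$). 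So the claim that the blocks are ``upper-triangular with nonzero diagonal entries $\bar u^{N_r}$'' needs to be replaced by an actual proof of nonvanishing of these determinants for all $n,m$, and you give no mechanism for that. By contrast, the paper's proof of Proposition~\ref{prop:m=Ih} avoids linear algebra on $\mathfrak{m}/\mathfrak{m}^2$ entirely: it introduces interpolating ideals $I_{C_{2^r}}$ and $J_i$, and runs a double induction in which each generator $t_i^{C_{2^r}}$ is solved for one at a time, using two instances of the recursion (for $t_i^{C_{2^{r-1}}}$ and for $t_{2^{n-r}m+i}^{C_{2^{r-1}}}$) together with the observation from Proposition~\ref{prop:invertHHRelementspreprop} that the resulting coefficient is a unit plus an element of $\mathfrak{m}$, hence a unit. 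If you want to salvage your cotangent-space approach, you would need to extract exactly this kind of unit-coefficient statement, at which point you have essentially reproduced the paper's induction.
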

Using a Krull dimension argument, this result implies that $(2,v_1, v_2, \ldots)$ forms a regular sequence in $R(k,m)$. We let
\[\Gamma_h = p^*\Fc\]
where $p \colon \Rkm \to \Rkm/I_h \cong K:=k[u^{\pm 1}]$ is the quotient map. The following corollary is an immediate consequence of Theorem~\ref{thm:keyintrotricky}.
\begin{cor}\label{cor:introCorUnivDef}
The pair $(\Rkm, \Fc)$ is a universal deformation of $(K, \Gamma_h)$.
\end{cor}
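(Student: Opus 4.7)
The plan is to verify the three ingredients of Lubin--Tate's classification theorem (in its 2-periodic graded form): that $\Rkm$ is a complete local ring with the correct residue ring, that $\Fc$ reduces to $\Gamma_h$ modulo the maximal ideal, and that the classifying map from the universal deformation ring to $\Rkm$ is an isomorphism. The first two are essentially immediate from Theorem~\ref{thm:keyintrotricky}; the main work is in the third.

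First I would apply Theorem~\ref{thm:keyintrotricky} to identify the ideal $\mathfrak{m}$ with $I_h$ in $\Rkm$, so that reduction modulo $\mathfrak{m}$ coincides with the quotient $p \colon \Rkm \to \Rkm/I_h \cong K$ used to define $\Gamma_h = p^*\Fc$. Thus $(\Rkm, \Fc)$ is a deformation of $(K, \Gamma_h)$, and the last item of Theorem~\ref{thm:listofawesomeness} guarantees that $\Gamma_h$ has height exactly $h$ over $K$.

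Next I would verify that $\Rkm$ is a regular complete local ring whose Krull dimension matches that of the Lubin--Tate ring, by counting parameters. Passing to the degree-zero subring $\Rkm^0$ (where the Laurent variable $u$ drops out), we have a complete local ring with residue field $k$ whose maximal ideal is topologically generated by the uniformizer $2$ of $W(k)$, the $2^{n-1}(m-1)$ degree-zero orbit elements coming from the $t_i^{C_{2^n}}$ for $1\leq i\leq m-1$, and the $2^{n-1}-1$ non-trivial ratios $\gamma_n^j u/u - 1$. That is $1 + 2^{n-1}(m-1) + (2^{n-1}-1) = h$ generators, so $\dim \Rkm^0 \leq h$. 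Theorem~\ref{thm:keyintrotricky} then presents this same maximal ideal as $(2, v_1, \ldots, v_{h-1})$, giving the matching lower bound and showing this is a regular system of parameters in a regular local ring of dimension exactly $h$.

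Finally, Lubin--Tate theory provides a canonical classifying map
\[
\varphi : W(k)[\![u_1, \ldots, u_{h-1}]\!][u^{\pm 1}] \longrightarrow \Rkm
\]
for the deformation $(\Rkm, \Fc)$. With a standard Araki-type choice of parameters, $\varphi$ sends each $u_i$ to an element congruent modulo $(2, v_1, \ldots, v_{i-1})$ to the coefficient of $x^{2^i}$ in $[2]_{\Fc}(x)$, which equals $v_i$ up to a unit. By Theorem~\ref{thm:keyintrotricky}, the image satisfies $\varphi(\mathfrak{m}_{\mathrm{LT}}) = (2, v_1, \ldots, v_{h-1}) = \mathfrak{m}$, so topological Nakayama gives surjectivity on the degree-zero parts. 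Since source and target (restricted to degree zero) are complete Noetherian regular local domains of the same Krull dimension $h$, the kernel of this surjection is a height-zero prime and therefore zero; the Laurent variable $u$ is then carried through trivially, so $\varphi$ is an isomorphism. The main conceptual obstacle was always Theorem~\ref{thm:keyintrotricky} itself; once it is granted, the Lubin--Tate comparison here is essentially formal.
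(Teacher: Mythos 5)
Your proposal is correct and follows essentially the same route as the paper's proof of Theorem~\ref{thm:mainalg}: both rest on $\mathfrak{m}=I_h$ (Proposition~\ref{prop:m=Ih}), the invertibility of $v_h$, the Krull-dimension-$h$ count, and the resulting identification of $\Rkm_0$ with $W(k)[\![u_1,\ldots,u_{h-1}]\!]$ via parameters built from the $v_i$ (the paper takes $u_i=v_iu^{1-2^i}$ and checks Lubin--Tate's criterion directly, while you phrase the same content as the classifying map being an isomorphism). One small caveat: exhibiting $\mathfrak{m}$ as $(2,v_1,\ldots,v_{h-1})$ gives only the upper bound $\dim\Rkm_0\le h$; the lower bound must come from the explicit presentation of $\Rkm$ itself, exactly as in the paper's unproved assertion that the Krull dimension is $h$.
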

\noindent
In other words, $\Rkm$ is the Lubin-Tate ring of a universal deformation of the formal group law $\Gamma_h$ of height $h$.

Once Theorem~\ref{thm:keyintrotricky} and Corollary~\ref{cor:introCorUnivDef} have been established, we begin to upgrade our results to homotopy theory via the Landweber Exact Functor Theorem. This gives rise to a complex orientable cohomology theory
\[X \longmapsto \Rkm\otimes_{BP_*}BP_*(X) \]
represented by a spectrum whose homotopy groups are $\Rkm$. We call the representing spectrum $E(k, \Gamma_h)$, as it is a form of the Lubin--Tate spectrum.

Before stating this as one of our results, we introduce more details with respect to the natural group actions at hand. Let
\begin{align*}
q &= 2^m-1
\end{align*}
and $k^\times[q] \subseteq k^\times$ be the subgroup of $q$-torsion elements. This is the subgroup of elements $\zeta \in k^\times$ so that $\zeta^q = 1$.  We let
\[\Gal := \Gal(k/ \mathbb{F}_2)\]
and
\[\Ckm := \Gal \ltimes k^\times[q]\]
where the action of $\Gal$ on $k^\times[q]$ is the natural action of the Galois group.
Let $\Gkm$ denote the group
\[\Gkm := C_{2^n} \times (\Gal \ltimes k^\times[q]).\]
We note that $\Gkm$ depends on $n$ also, but we think of $n$ as being fixed while allowing $m$ and $k$ to vary.

There is an obvious action of $\Gkm$ on $\Rkm$ defined as follows:
The action of $C_{2^n}$ on $\Rkm$ is the $W(k)$-linear action determined by
\begin{align}
\label{eq:action1}
f_{\gamma_n}(\gamma_n^r x) = \left\{ \begin{array}{ll} \gamma_n^{r+1} x & r < 2^{n-1}-1 \\
-x & r = 2^{n-1}-1
\end{array}\right. \end{align}
for $x=t_i^{C_{2^n}}$ ($1\leq i\leq m-1$) and $x=u$. The group $\Gal(k/\mathbb{F}_2)$ acts on $\Rkm$ via its action on the coefficients $W(k)$.  The group $k^\times[q]$ acts on $\Rkm$ by
 \begin{align} \label{eq:action2}
f_{\zeta}(u)&=\zeta^{-1} u,
\end{align}
and
 \begin{align} \label{eq:action3}
f_{\zeta}(t_i^{C_{2^n}}) &= t_i^{C_{2^n}}
\end{align}
for every $\zeta \in k^{\times}[q]$ and $1 \leq i \leq m-1$.  All together, these three actions combine to give an action of $\Gkm$ on $\Rkm$.

We can now state our next main result, which is also one of the main motivation for our work, and is proved in Section~\ref{sec:classLT}.
\begin{theorem}\label{thm:modelE0}\label{thm:model}
There exists a height-$h$ formal group law $\Gamma_h$ defined over $\F_2$ such that for any finite field $k$ of characteristic 2, there is a Lubin--Tate theory $E(k, \Gamma_h)$, functorial in $k$, such that
\begin{equation}\label{eq:isointro}\pi_*E(k,\Gamma_h) \cong \Rkm.\end{equation}
Furthermore, there is a subgroup $\Gkm$ inside the Morava stabilizer group $\mathbb{G}(k, \Gamma_h)$ so that the isomorphism \eqref{eq:isointro} is equivariant for the action of $\Gkm$ where the action of $\Gkm$ on $\Rkm$ is described in \eqref{eq:action1}, \eqref{eq:action2}, and \eqref{eq:action3}.
\end{theorem}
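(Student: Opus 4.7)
The plan is to construct $E(k, \Gamma_h)$ by applying the Landweber Exact Functor Theorem to the classifying map $BP_\ast \to \Rkm$ of $\Fc$, then invoke Goerss--Hopkins--Miller to upgrade the resulting spectrum to an $E_\infty$-ring with a continuous action of the Morava stabilizer $\mathbb{G}(k, \Gamma_h)$, and finally exhibit $\Gkm$ as a subgroup of $\mathbb{G}(k, \Gamma_h)$ whose induced action on $\pi_\ast E(k, \Gamma_h) \cong \Rkm$ realizes \eqref{eq:action1}--\eqref{eq:action3}.

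For Landweber exactness, I would combine Theorems~\ref{thm:listofawesomeness} and \ref{thm:keyintrotricky}. The identification $I_h = \mathfrak{m}$ makes $\Rkm$ a regular complete local ring (after inverting $u$) of Krull dimension $h$, so $(2, v_1, \ldots, v_{h-1})$ is a regular sequence. Moreover, $\Rkm/I_h \cong k[u^{\pm 1}]$ carries $\Fc$ as a formal group law of height exactly $h$, which forces $v_h$ to be a unit modulo $I_h$ and hence $\Rkm/(2, v_1, \ldots, v_h) = 0$, so the Landweber criterion at higher levels holds vacuously. Thus $X \mapsto \Rkm \otimes_{BP_\ast} BP_\ast(X)$ is a cohomology theory, represented by a spectrum $E(k, \Gamma_h)$. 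By Corollary~\ref{cor:introCorUnivDef}, $(\Rkm, \Fc)$ is a universal deformation of $(K, \Gamma_h)$, so $E(k, \Gamma_h)$ is a form of the Lubin--Tate spectrum at $\Gamma_h$, and Goerss--Hopkins--Miller \cite{rezk, GHMTheorem} supplies the $E_\infty$-structure and the continuous $\mathbb{G}(k, \Gamma_h)$-action refining the natural action on $\pi_\ast$, with functoriality in $k$ automatic from the functoriality of both constructions.

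The substantive step, and the main obstacle, is to realize $\Gkm$ as a subgroup of $\mathbb{G}(k, \Gamma_h)$ so that the induced action on $\Rkm$ agrees with \eqref{eq:action1}--\eqref{eq:action3}. The $C_{2^n}$-factor embeds via Hewett's theorem, and its induced action on $\Rkm$ is read off from the universal property of $\BPCn$: the generators $t_i^{C_{2^n}}$ are defined through the strict isomorphism $\psi_{\gamma_n}$ of \eqref{eq:psi}, so they transform by \eqref{eq:action1} by construction, with the sign clause at $j = 2^{n-1}-1$ reflecting that the central $C_2 \subset C_{2^n}$ acts as the formal inverse $[-1]_{\Fc}$, which produces the factor of $-1$ on the parameter $u$. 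The Galois factor $\Gal(k/\F_2)$ embeds via its natural action on $W(k)$ and fixes $\Fc$ because $\Gamma_h$ is defined over $\F_2$. The roots of unity $k^\times[q]$ embed via the automorphisms $[\zeta]_{\Gamma_h}(x) = \zeta x$ of $\Gamma_h$ over $k$; their canonical lifts to $(\Rkm, \Fc)$ by the universal property scale $u$ by $\zeta^{-1}$ and fix the degree-zero generators $t_i^{C_{2^n}}$, giving \eqref{eq:action2}--\eqref{eq:action3}. Verifying that these three embeddings commute correctly to assemble into an action of the semidirect product $\Gkm = C_{2^n} \times (\Gal \ltimes k^\times[q])$ and that all sign and scaling conventions match is a careful bookkeeping exercise and represents the main technical hurdle.
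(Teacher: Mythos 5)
Your overall architecture (Landweber exactness deduced from $I_h=\mathfrak m$, then Goerss--Hopkins--Miller) matches the paper's, and that part of the proposal is sound. The gap is in the step you yourself flag as the main obstacle: exhibiting $\Gkm\subseteq\G(k,\Gamma_h)$ acting by \eqref{eq:action1}--\eqref{eq:action3}. You run the argument in the wrong direction. Hewett's theorem produces only an abstract (conjugacy class of) $C_{2^n}$-subgroup of $\G(k,\Gamma_h)$; it gives no control whatsoever over how such a subgroup acts on any particular presentation of the Lubin--Tate ring, so the induced action cannot be ``read off'' as \eqref{eq:action1}. The same problem affects the roots of unity: granting that $x\mapsto\zeta x$ is an automorphism of $\Gamma_h$ (which itself requires $\zeta^q=1$ and the specific form of $\Gamma_h$, cf.\ Proposition~\ref{prop:killm}), the Lubin--Tate theorem gives a unique lift to the universal deformation, but identifying that lift with the formulas \eqref{eq:action2}--\eqref{eq:action3} is precisely what must be proved, not assumed.

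The paper argues in the opposite, and correct, direction. One first checks that the explicit formulas define automorphisms of the deformation $(\Rkm,u,F_h)$ in the category $\mathcal{M}_{FG}^{h,\per,c}$: for $\gamma_n$ this is the pair $(f_{\gamma_n},\psi_{\gamma_n})$ with $\psi_{\gamma_n}$ the strict isomorphism \eqref{eq:psi} inherited from the norm structure on $\BPCn$, while for $\zeta$ and $\sigma$ the accompanying isomorphism is the identity; Propositions~\ref{prop:killm} and~\ref{cor:finalact} verify that the relevant ideal is invariant and that the three actions commute as required to assemble into $\Gkm$. Since the resulting action is faithful and by continuous ring isomorphisms, the Lubin--Tate identification $\G(k,\Gamma_h)\cong\Aut_{\mathcal{M}_{FG}^{h,\per,c}}((\Rkm,u,F_h))$ of \eqref{eq:morcont} then yields the embedding $\Gkm\subseteq\G(k,\Gamma_h)$ with the prescribed action for free (this is Theorem~\ref{thm:mainalg}); Hewett's theorem plays no logical role and is cited in the paper only to explain that the $C_{2^n}$ so obtained is maximal when $m$ is odd. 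You have the ingredients in hand --- you correctly observe that the $t_i^{C_{2^n}}$ transform by \eqref{eq:action1} ``by construction'' --- but to close the gap you must make that explicit construction, rather than an abstract embedding, the source of the subgroup.
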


\begin{remark}\label{rem:pi0remark}
The ring $\pi_*E(k, \Gamma_h) \cong \Rkm$ is usually described in a way that emphasizes the structure of $\pi_0E(k, \Gamma_h) \cong \Rkm_0$ as in Equation~\ref{eq:classicLTring} above (where $R(k, \Gamma_h) \cong \pi_*E(k,\Gamma_h)$), that is, as a power series ring over $W(k)$ on $h-1$ deformation parameters $u_1, \ldots, u_{h-1}$.
In order to give a more familiar description of $\pi_*E(k, \Gamma_h)$, in Proposition~\ref{thm:modelE0-alg}, we give a description of $\Rkm$ which displays the structure of $\Rkm_0$. We show that there are elements
\[C_{2^n}\cdot \tau_i = \{\tau_i, \gamma_n \tau_i, \ldots, \gamma_{n}^{2^{n-1}-1}\tau_i \} \subseteq \Rkm_0\]
for $1\leq i\leq m-1$ and
 \[\overline{C_{2^n}\cdot \tau_m} = \{\tau_m, \gamma_n \tau_m, \ldots, \gamma_{n}^{2^{n-1}-2}\tau_m \} \subseteq \pi_0E(k, \Gamma_h)\]
such that
\[ \Rkm \cong  W(k) [\![ C_{2^n}\cdot \tau_1, \ldots, C_{2^n}\cdot \tau_{m-1},\overline{C_{2^n}\cdot \tau_m} ]\!][u^{\pm1}] .\]
Note that there are $h-1$ power series generators in this description of $\Rkm$ and once we realize $\Rkm_0$ as the Lubin-Tate ring for the universal deformation of a formal group law of height $h$, these will be deformation parameters.
We also describe the action of $\Gkm$ explicitly in terms of these generators in Proposition~\ref{thm:modelE0-alg}.
\end{remark}

Once Theorem~\ref{thm:model} has been established, given that its construction was motivated by the spectra $BP^{(\!(C_{2^n})\!)}\langle m\rangle$ which are equivariant spectra, it is natural to seek to refine $E(k,\Gamma_h)$ to an equivariant spectrum.
As discussed earlier, by the Goerss--Hopkins--Miller theorem \cite{rezk, GHMTheorem}, $E(k, \Gamma_h)$ is a complex orientable $E_{\infty}$-ring spectrum with a continuous action of $\mathbb{G}(k, \Gamma_h)$ by maps of $E_{\infty}$-ring spectra which refines the action of $\mathbb{G}(k, \Gamma_h)$ on $\pi_*E(k, \Gamma_h) \cong \Rkm$.  In other words, for any $G \subseteq \mathbb{G}(k, \Gamma_h)$, we may view $E(k, \Gamma_h)$ as a commutative ring object in naive \(G\)-spectra.

The functor
\[X\longmapsto F(EG_{+},X)\]
takes naive equivalences to genuine equivariant equivalences, and hence allows us to view $E(k, \Gamma_h)$ as a genuine \(G\)-equivariant spectrum. The commutative ring spectrum structure on $E(k, \Gamma_h)$ gives an action of a trivial \(E_{\infty}\)-operad on the spectrum \(F(EG_{+},E(k, \Gamma_h))\).  Work of Blumberg--Hill \cite{BlumbergHill} shows that this is sufficient to ensure that \(F(EG_{+},E(k, \Gamma_h))\) is actually a genuine equivariant commutative ring spectrum, and hence it has norm maps.

The spectrum $E(k, \Gamma_h)$ has an action of $\Gkm$ by maps of $E_{\infty}$-ring spectra that refines the $\Gkm$-action on $\pi_*E(k, \Gamma_h)$ described in Theorem~\ref{thm:model} (or rather, in Equations~\eqref{eq:action1}, \eqref{eq:action2}, and \eqref{eq:action3}).  By passing to the cofree localization $F(E{C_{2^n}}_+, E(k, \Gamma_h))$, we may view $E(k, \Gamma_h)$ as a commutative $C_{2^n}$-spectrum.

Recent work of Hahn--Shi \cite{hahnshi} establishes the first known connection between the obstruction-theoretic actions on Lubin--Tate theories and the geometry of complex conjugation.  More specifically, there is a Real orientation for any of the $E(k, \Gamma_h)$: there is a $C_2$-equivariant homotopy commutative ring map
\[
\MU_{\mathbb R}\longrightarrow i_{C_{2}}^{\ast}E(k, \Gamma_h).
\]
Using the norm-forget adjunction, for any finite $G \subset \mathbb{G}(k, \Gamma_h)$ that contains the central $C_2$-subgroup, there is a $G$-equivariant homotopy commutative ring map
\[
\MUG\longrightarrow N_{C_{2}}^{G}i_{C_{2}}^{\ast}E(k, \Gamma_h) \longrightarrow E(k, \Gamma_h).
\]

For our explicit forms of $E(k, \Gamma_h)$, we have the following theorem, which is proved at the beginning of Section~\ref{sec:equivspectrum}.

\begin{theorem}\label{thm:IntroEquivOrientation}
There is a $C_{2^n}$-equivariant homotopy commutative ring map
$$\MUCn \longrightarrow E(k,\Gamma_h).$$
This map factors through a homotopy commutative ring map
\[ \phi \colon BP^{(\!( C_{2^n})\!)} \to E(k,\Gamma_h)\]
such that $\pi_*^e\phi$ is the map
$ \Rn \to \pi_*E(k, \Gamma_h)$
determined by
\begin{align*}
t_i^{C_{2^n}} &\longmapsto \begin{cases}t_i^{C_{2^n}} & 1\leq i \leq m-1, \\
 u^{2^m-1} & i=m, \\
0 & i>m.
\end{cases}
\end{align*}
\end{theorem}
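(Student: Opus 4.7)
The plan is to construct $\phi$ from the Real orientation via norming, and then identify its effect on $\pi_*^e$ by appealing to the universal property of $\BPCn$. First, I would start from the Hahn--Shi Real orientation $\MUR \to i_{C_2}^{\ast} E(k, \Gamma_h)$ and apply the norm-forget adjunction $N_{C_2}^{C_{2^n}} \dashv i_{C_2}^{\ast}$ to obtain the $C_{2^n}$-equivariant homotopy commutative ring map $\MUCn = N_{C_2}^{C_{2^n}} \MUR \to E(k, \Gamma_h)$ asserted in the first sentence of the theorem.

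To factor this map through $\BPCn$, I would use the $C_2$-equivariant Quillen idempotent $\MUR \to \BPR$ of Hu--Kriz and Araki together with its canonical section $\BPR \to \MUR$. Applying $N_{C_2}^{C_{2^n}}$ to the section yields a $C_{2^n}$-equivariant homotopy commutative ring map $\BPCn \to \MUCn$; composing with the map from the previous step produces the desired $\phi\colon \BPCn \to E(k, \Gamma_h)$.

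To compute $\pi_*^e\phi\colon \Rn \to \Rkm$, I would invoke \cite[Proposition 11.28]{HHR}: $C_{2^n}$-equivariant ring maps out of $\pi_*^e\BPCn$ correspond to $2$-typical formal group laws equipped with a compatible strict isomorphism, and each $t_i^{C_{2^n}} \in \Rn$ is sent to the coefficient of $x^{2^i}$ in that strict isomorphism. In our setting the formal group law is $\Fc = F_h$, pulled back to $\Rkm$ via the classifying map $BP_* \to \Rkm$, and the strict isomorphism $\psi_{\gamma_n}\colon \Fc \to \Fc^{\gamma_n}$ is the canonical one determined by the prescribed $\gamma_n$-action of \eqref{eq:action1}--\eqref{eq:action3}.

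It remains to identify these coefficients. By construction, $\Rkm$ is a completion of an unramified extension of a localization of $\Rnm = \pi_*^e\BPCn\langle m\rangle$, and the pair $(\Fc, \psi_{\gamma_n})$ on $\Rkm$ is inherited from $\Rnm$. Because $\BPCn\langle m\rangle$ is obtained from $\BPCn$ by the twisted monoid ring construction that kills $C_{2^n}\cdot t_i^{C_{2^n}}$ for $i > m$, the strict isomorphism over $\Rnm$, and hence over $\Rkm$, truncates to $\psi_{\gamma_n}(x) = x + \sum_{i=1}^{m}{}^{\Fc^{\gamma_n}} t_i^{C_{2^n}} x^{2^i}$; matching degrees forces the image of $t_m^{C_{2^n}}$ to be $u^{2^m-1}$. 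This gives the stated formulas, with $t_i^{C_{2^n}} \mapsto 0$ for $i > m$. The main obstacle is verifying that the strict isomorphism classified by the topological map $\phi$ coincides with the algebraic one prescribed by \eqref{eq:action1}--\eqref{eq:action3}; this should reduce to the uniqueness of the $2$-typical strict isomorphism lifting the $\gamma_n$-action on $\Fc$.
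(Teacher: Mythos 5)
Your proposal follows essentially the same route as the paper's proof: start from the Hahn--Shi Real orientation, norm it up and compose with the counit of the norm--restriction adjunction, factor through $\BPCn$ using $2$-typicality (the paper factors the Real orientation itself through $\BPR$ rather than norming the section $\BPR\to\MUR$, but these give the same composite), and read off $\pi_*^e\phi$ from \cite[Proposition~11.28]{HHR}. Two small caveats on the last step: degree-counting alone does not force $t_m^{C_{2^n}}\mapsto u^{2^m-1}$, since the degree $2(2^m-1)$ part of $\Rkm$ contains many other classes (e.g.\ $\gamma_n u^{2^m-1}$); what forces it is the defining relation $t_m^{C_{2^n}}=u^{2^m-1}$ in $\Rnper(k)\langle m\rangle$ built into the map $f$ of \eqref{eq:mapf}. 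Likewise, the ``obstacle'' you flag at the end is not resolved by uniqueness of strict isomorphisms between $2$-typical formal group laws (these are far from unique in general), but by the fact that $E(k,\Gamma_h)$ and its $C_{2^n}$-action were manufactured, via Landweber exactness and Goerss--Hopkins--Miller, precisely from the triple $(f, F_h, \psi_{\gamma_n})$ of Sections~\ref{sec:actions} and~\ref{sec:proofs-alg}, so the topological strict isomorphism classified by $\phi$ agrees with the algebraic one by construction --- this is exactly what the paper's terse ``by construction'' is invoking.
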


The $C_{2^n}$-equivariant spectra $\MUCn$ and $\BPCn$ are accessible to computations, and the existence of equivariant orientations renders computations that rely on the slice spectral sequence tractable.  Using differentials in the slice spectral sequence of $\MUR$ and the Real orientation $\MUR \to E(k, \Gamma_h)$, Hahn and Shi \cite[Theorem~1.2]{hahnshi} computed $E(k, \Gamma_h)^{hC_2}$, valid for arbitrarily large heights $h$.  Computations of \cite[Section~9]{HHR}, \cite{HHRC4}, \cite{c4e4}, and \cite{e2c4} show that there are systematic ways of obtaining differentials in the slice spectral sequences of $\MUCn$, $\BPCn$, and their localizations using techniques in equivariant homotopy theory.

We prove the following theorem in Section~\ref{sec:localization}:
\begin{theorem}\label{thm:IntroInvertingD}
There is an element $D \in \pi_{*\rho_{C_{2^n}}}^{C_{2^n}} \MUCn$, where $\rho_{C_{2^n}}$ is the real regular representation,
that becomes invertible under the map $\pi_\bigstar^{C_{2^n}} \MUCn \longrightarrow \pi_\bigstar^{C_{2^n}} E(k, \Gamma_h)$ such that there are factorizations
$$\begin{tikzcd}
\MUCn \ar[d] \ar[r] &E(k, \Gamma_h) \\
D^{-1} \MUCn \ar[ru, dashed]
\end{tikzcd} \hspace{0.3in} \begin{tikzcd}
\BPCn \ar[d] \ar[r] &E(k, \Gamma_h) \\
D^{-1} \BPCn \ar[ru, dashed]
\end{tikzcd}$$
of the $C_{2^n}$-equivariant orientations through $D^{-1}\MUCn$ and $D^{-1}\BPCn$.  Furthermore, the spectra $D^{-1} \MUCn$ and $D^{-1}\BPCn$ are cofree and satisfy the Hill--Hopkins--Ravenel periodicity theorem \cite[Theorem 9.19]{HHR}.
\end{theorem}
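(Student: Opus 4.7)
The plan is threefold: (i) lift the algebraic element $D$ of Theorem~\ref{thm:listofawesomeness} to a class $D \in \pi^{C_{2^n}}_{*\rho_{C_{2^n}}}\MUCn$, (ii) verify via Theorem~\ref{thm:IntroEquivOrientation} that $\phi(D)$ is a unit, so $\phi$ factors through the Bousfield localizations, and (iii) invoke \cite[Theorem~9.19]{HHR} to deduce cofreeness and representation-graded periodicity.

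For step (i), $\MUCn = N_{C_2}^{C_{2^n}}\MUR$ carries the Hill--Hopkins--Ravenel classes $\bar{r}_i^{C_{2^n}}$, which refine the underlying polynomial generators of $\pi_*^e \MUCn$ to classes in representation-graded equivariant homotopy, and whose images after applying the Quillen idempotent are the generators $t_i^{C_{2^n}}$ of $\Rn$. Orbit products and norms of these generators land in degrees that are multiples of $\rho_{C_{2^n}}$. I will take $D$ to be built from the norm of $\bar{r}_m^{C_{2^n}}$, with underlying image a $C_{2^n}$-symmetric product in the orbit $C_{2^n}\cdot \bar{r}_m^{C_{2^n}}$; this lifts the algebraic $D$ (or a suitable power thereof) of Theorem~\ref{thm:listofawesomeness} to a class in $\pi^{C_{2^n}}_{k\rho_{C_{2^n}}}\MUCn$ for some $k\geq 1$. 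The identification $v_h \equiv (t_m^{C_{2^n}})^{(2^h-1)/(2^m-1)} \pmod{I_h}$ from Theorem~\ref{thm:listofawesomeness} confirms that this class agrees, modulo the relevant regular sequence, with the height-detecting element built from $v_h$.

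For step (ii), Theorem~\ref{thm:IntroEquivOrientation} gives $\phi(t_m^{C_{2^n}}) = u^{2^m-1}$ with $u$ a unit in $\pi_* E(k,\Gamma_h)$; hence $\phi(D)$ is a unit underlyingly, and since $D$ is a $C_{2^n}$-equivariant class its inverse lifts uniquely to $\pi^{C_{2^n}}_\bigstar E(k,\Gamma_h)$. The universal property of equivariant Bousfield localization at $D$ then furnishes the dashed factorization through $D^{-1}\MUCn$; the factorization through $D^{-1}\BPCn$ follows analogously, using that $\phi$ already factors through $\BPCn$ (Theorem~\ref{thm:IntroEquivOrientation}) and that the image of $D$ in $\pi^{C_{2^n}}_\bigstar\BPCn$ is inverted by the induced map. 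For step (iii), the hypothesis of \cite[Theorem~9.19]{HHR} is precisely the existence of an invertible class in $\pi^{C_{2^n}}_{k\rho_{C_{2^n}}}$, which we have constructed; its conclusions give both $\Sigma^{k\rho_{C_{2^n}}}$-periodicity and cofreeness (i.e., $D^{-1}\MUCn \to F(E{C_{2^n}}_+, D^{-1}\MUCn)$ is an equivalence), with the same argument applying verbatim to $\BPCn$.

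The main obstacle is step (i): pinning down an explicit class $D$ in the correct representation grading whose underlying image agrees up to units with the algebraic element of Theorem~\ref{thm:listofawesomeness} requires careful bookkeeping relating the Hill--Hopkins--Ravenel $\bar{r}$-generators to the $t_i^{C_{2^n}}$-generators, together with tracking the $C_{2^n}$-action throughout. Once $D$ is fixed, the factorizations in step (ii) and the periodicity in step (iii) are formal consequences of the universal property of equivariant Bousfield localization and the Hill--Hopkins--Ravenel periodicity theorem, respectively.
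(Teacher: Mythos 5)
There is a genuine gap in step (i): your candidate $D$ is too small. You propose to invert only (a power of) $N_{C_2}^{C_{2^n}}\bar r_m^{C_{2^n}}$, i.e.\ the norm of the single top-level generator, and in step (ii) you only verify that $t_m^{C_{2^n}}\mapsto u^{2^m-1}$ is a unit. The paper's $D$ must contain the product $\prod_{i=1}^{n} N_{C_2}^{C_{2^n}}\bigl(\bar t^{\,C_{2^i}}_{2^{n-i}m}\bigr)$, one norm class for \emph{each} intermediate subgroup $C_2\subseteq C_4\subseteq\cdots\subseteq C_{2^n}$ (exactly as in $D_{\mathrm{HHR}}=(N\bar r_1^{C_8})(N\bar r_3^{C_4})(N\bar r_{15}^{C_2})$). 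This is not optional: the hypotheses of the Hill--Hopkins--Ravenel periodicity theorem are \emph{not} ``there exists an invertible class in $\pi^{G}_{k\rho_G}$'' as you assert; they require specific norm classes from every subgroup level to be inverted so that the relevant $u_{2\sigma_H}$'s become invertible, and likewise the homotopy fixed point (cofreeness) theorem of \cite[Section~10]{HHR} needs all of these levels. With only the top-level norm inverted, neither periodicity nor cofreeness follows. Moreover, showing that the lower classes $t^{C_{2^i}}_{2^{n-i}m}$ ($i<n$) become units in $\pi_*E(k,\Gamma_h)$ is the real algebraic content here: it is Proposition~\ref{prop:invertHHRelementspreprop}, proved by downward induction using the recursion formula of Theorem~\ref{theorem:Mform}, and your proposal never engages with it. (The paper also folds further unit classes into $D$ so that $I_{C_2}=I_{C_{2^n}}$ persists after localization, which is used later.)

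A secondary soft spot: passing from ``$x\in\pi^{C_2}_{*\rho_2}$ maps to a unit'' to ``$N_{C_2}^{C_{2^n}}(x)$ maps to a unit in $\pi^{C_{2^n}}_{*\rho_{C_{2^n}}}E(k,\Gamma_h)$'' is not automatic from equivariance; the paper proves this (Proposition~\ref{prop:NormingRho2}) by chasing the class through the slice and homotopy fixed point spectral sequences and using that the norm of a permanent cycle is a permanent cycle. Your step (ii) for the factorizations themselves (telescope plus $D^{-1}E_h\simeq E_h$) is fine and matches the paper, but the construction and verification of $D$ need the additional input above.
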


When $G = C_8$ and $h =4$, the Hill--Hopkins--Ravenel detecting spectrum $\Omega$ is defined as the $C_8$-fixed points of $\Omega_\mathbb{O} := D_{\text{HHR}}^{-1}\MUCeight$, where
$$D_{\text{HHR}} = (N_{C_2}^{C_8} \bar{r}_1^{C_8})(N_{C_2}^{C_8} \bar{r}_3^{C_4}) (N_{C_2}^{C_8} \bar{r}_{15}^{C_2}) \in \pi_{19\rho_8}^{C_8} \MUCeight$$
is defined in \cite[Section 9]{HHR}.  Our proof of Theorem~\ref{thm:IntroInvertingD} implies that $D$ is divisible by $D_{\text{HHR}}$,
and there is a factorization
$$\begin{tikzcd}
\MUCeight \ar[r] \ar[d] & E(k, \Gamma_4)\\
\Omega_\mathbb{O} \ar[ru, dashed]
\end{tikzcd}$$
of the $C_8$-equivariant orientation of $E(k, \Gamma_4)$ through $\Omega_\mathbb{O}$.

We finally return to our analysis of the chromatic filtration arising from $BP^{(\!(C_{2^n})\!)}$.   We have already discussed some algebraic properties of this filtration in Theorem~\ref{thm:listofawesomeness}. In Section~\ref{sec:heightfilt} we also study the chromatic localizations of $BP^{(\!(C_{2^n})\!)}$. We let $K(r)$ be any form of Morava $K$-theory at height $r$. The Bousfield localization functor $L_{K(r)}$ is independent of this choice. Among other results on localizations, we prove the following result in Theorem~\ref{cor:computingKr}.
\begin{theorem}
The $K(r)$-localization of the spectrum $ i_e^* \BPG \langle m \rangle$ is non-zero for $0 \leq r \leq h$ and trivial when $r>h$.
\end{theorem}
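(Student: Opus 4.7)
The plan is to split at the cutoff $r = h$ and invoke Theorem~\ref{thm:listofawesomeness} together with standard chromatic technology in each range.

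\emph{Nonvanishing for $0 \le r \le h$.} The first step is to observe that $D^{-1}\Rnm$ satisfies the Landweber regularity criterion: by Theorem~\ref{thm:listofawesomeness}, $(2, v_1, \ldots, v_h)$ is a regular sequence in $D^{-1}\Rnm$, $v_h = t_m^{(2^h - 1)/(2^m - 1)}$ is a unit modulo $I_h$ (since $D$ makes $t_m$ invertible), and $v_r \in I_r$ for $r > h$. Consequently $D^{-1}\Rnm/I_r = 0$ for $r > h$, so Landweber's criterion holds vacuously beyond height $h$ and by the regular sequence up to $h$. Hence $D^{-1} i_e^{*}\BPG\langle m \rangle$ represents a Landweber exact homology theory whose underlying formal group law has height exactly $h$. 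By the standard computation of Morava $K$-theory of Landweber exact spectra, $K(r) \wedge D^{-1} i_e^{*}\BPG\langle m \rangle$ is nonzero for $0 \le r \le h$. Since $D$-inversion is telescopic, $K(r) \wedge D^{-1}(-) \simeq D^{-1}(K(r) \wedge -)$, and nonvanishing after $D$-inversion forces nonvanishing of $K(r) \wedge i_e^{*}\BPG\langle m \rangle$, so $L_{K(r)} i_e^{*}\BPG\langle m \rangle \ne 0$ for $0 \le r \le h$.

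\emph{Vanishing for $r > h$.} For the vanishing direction, the plan is to exploit the two complex orientations on $K(r) \wedge i_e^{*}\BPG\langle m \rangle$. Let $R := \pi_{*}\bigl(K(r) \wedge i_e^{*}\BPG\langle m \rangle\bigr)$. The orientation coming from $K(r)$ gives a formal group law $F_{K(r)}$ over $R$ of height exactly $r$ at every residue field (its Araki generator $v_r^{K(r)}$ is the image of a unit in $\pi_{*}K(r) = \mathbb{F}_2[v_r^{\pm 1}]$, while $v_i^{K(r)} = 0$ for $i \ne r$). The orientation from $i_e^{*}\BPG\langle m \rangle$ gives a second formal group law $F_E$, and the relation $v_r \in I_r \cdot \Rnm$ from Theorem~\ref{thm:listofawesomeness} --- combined with $2 = 0$ in the $\mathbb{F}_2$-linear ring $R$ --- shows that the Araki generator of $F_E$ lies in the ideal $(v_1^E, \ldots, v_{r-1}^E) \subseteq R$. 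At any residue field $\kappa$ of $R$, the condition that $F_E|_{\kappa}$ have height $r$ would force $v_i^E|_{\kappa} = 0$ for $i < r$ and hence $v_r^E|_{\kappa} = 0$ via the relation, contradicting $v_r^E|_{\kappa}$ being a unit. But height is a strict-isomorphism invariant and $F_{K(r)} \cong F_E$ canonically over $R$, so $F_E$ must have height $r$ at every residue field. This contradiction forces $R$ to have no residue fields, i.e., $R = 0$, and therefore $K(r) \wedge i_e^{*}\BPG\langle m \rangle \simeq 0$.

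\emph{Main obstacle.} The delicate part is the vanishing argument: unlike $BP\langle h \rangle$, where $v_r = 0$ outright in $\pi_{*}$ for $r > h$, here $v_r$ is merely contained in the ideal $I_r$, and $i_e^{*}\BPG\langle m \rangle$ is not itself Landweber exact (the criterion fails precisely at $r = h+1$). The plan for overcoming this is to transfer the algebraic relation $v_r \in I_r$ through the comparison of the two complex orientations on $K(r) \wedge i_e^{*}\BPG\langle m \rangle$ via the Quillen--Miller canonical strict isomorphism of formal group laws, reducing the vanishing to the fundamental fact that height is an isomorphism-invariant.
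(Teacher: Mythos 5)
Your proof is correct, but both halves take routes genuinely different from the paper's. For nonvanishing, the paper does not invoke Landweber exactness of $D^{-1}\Rnm$ abstractly: it computes $\pi_*L_{K(r)}i_e^*D^{-1}\BPG\langle m\rangle \cong \bigl(v_r^{-1}\pi_*i_e^*D^{-1}\BPG\langle m\rangle\bigr)^{\wedge}_{I_r}$ by hand, running the tower of generalized Moore spectra $M_{J(i)}$ against the regular sequence $(2,v_1,\dots,v_{h-1})$ (Proposition~\ref{prop:computingKr}), and then deduces nonvanishing of $L_{K(r)}i_e^*\BPG\langle m\rangle$ from the fact that the unit $S^0 \to L_{K(r)}i_e^*\BPG\langle m\rangle \to L_{K(r)}i_e^*D^{-1}\BPG\langle m\rangle$ is a composite of ring maps with nonzero target; your telescope argument ($K(r)$-acyclicity passes to the filtered colimit $D^{-1}(-)$) is an equally valid replacement, though the paper's version buys the explicit formula for the localized homotopy. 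For vanishing, the paper forms $L_rX \wedge M_{J(i)} \simeq L_rX \wedge_{MU} MU/(v_0^{j_0},\dots,v_{r-1}^{j_{r-1}})$, uses the K\"unneth spectral sequence and $v_r \in I_{h+1}$ (Proposition~\ref{prop:heightBPbracketsm}) to show every homotopy class is $v_r$-power torsion, hence each $L_rX\wedge M_{J(i)}$ is contractible and $L_{K(r)}X \simeq \ast$. Your two-orientation argument on $K(r)\wedge X$ is shorter and sharper: since $2=0$ in $K(r)_*X$ and the classes $v_i \bmod I_i$ are strict-isomorphism invariants, induction gives $v_i^E = v_i^{K(r)} = 0$ for $i<r$ and $v_r^E = v_r^{K(r)}$ a unit, while $v_r \in I_r$ forces $v_r^E = 0$; so $K(r)_*X=0$ outright. (Your detour through residue fields is unnecessary --- the unit-equals-zero contradiction already kills the ring.) The only points to make explicit are that you take the $2$-typical form of $K(r)$, that $t_r^{C_2}$ and the Araki generator agree modulo $I_r$ so the containment transfers, and that $i_e^*\BPG\langle m\rangle$ is a complex-oriented homotopy ring spectrum so that the canonical strict isomorphism between the two formal group laws on $K(r)\wedge i_e^*\BPG\langle m\rangle$ is available; the paper's route sidesteps this last point by working only with the $MU$-module structure. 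Both proofs rest on the same algebraic inputs from Theorem~\ref{thm:listofawesomeness}.
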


\subsection{Summary of the contents}
We now turn to a summary of the contents of this paper.  Sections~\ref{sec:actions}, \ref{sec:formulas} and \ref{sec:proofs-alg} contain our algebraic results and Sections~\ref{sec:proofs-top}, \ref{sec:localization} and \ref{sec:heightfilt} focus on the topological results.

Section~\ref{sec:actions} provides the necessary background for Lubin--Tate deformation theory and sets up a framework for studying formal group laws with actions of finite groups.  We then use this framework to study formal group laws equipped with a $C_{2^n}$-action that is compatible with certain cyclic groups of orders coprime to 2 and the associated Galois groups.
In Section~\ref{sec:formulas}, we study the deformation parameters  $t_k^{C_r}$ prove recursive formulas relating the $t_k^{C_r}$-generators for various values of $r$.  The key result of this section is Theorem~\ref{theorem:Mform}, which expresses $t_k^{C_{2^{n-1}}}$ in terms of $t_i^{C_{2^n}}$ for every $n \geq 2$ and $k \geq 1$.  These formulas will be essential for proving our main theorems.
In Section~\ref{sec:proofs-alg}, we prove that the maximal ideal of $\mathfrak{m}$ of $R(k,m)$ is equal to $I_h$ and introduce the formal group law $\Gamma_h$.

In Section~\ref{sec:proofs-top}, we construct the spectrum $E(k,\Gamma_h)$. We then refine it to an equivariant spectrum and study some properties of that equivariant theory in Section~\ref{sec:localization}. In particular, we introduce the periodicity generator $D \in \pi_{\star}BP^{(\!(C_{2^n})\!)}$.
In Section~\ref{sec:heightfilt}, we examine the Bousefield localizations of  $BP^{(\!(C_{2^n})\!)}\langle m\rangle $ and of $D^{-1}BP^{(\!(C_{2^n})\!)}\langle m\rangle $ with respect to the Morava $K$-theories $K(r)$.

\subsection{Acknowledgements}
Special cases of these equivariant Real oriented Lubin--Tate theories at small heights have already appeared in \cite{HHR, HHRC4, c4e4, e2c4}, if not explicitly, certainly in spirit. These papers have both inspired and influenced various aspects of this project.
The authors would like to thank Mark Behrens, Irina Bobkova, Paul Goerss, Mike Hopkins, Hana Jia Kong, Peter May, Lennart Meier, Haynes Miller, Doug Ravenel, Vesna Stojanoska, Guozhen Wang, and Zhouli Xu for helpful comments and conversations.


\section{Formal group laws with group actions}\label{sec:actions}
The goal of this section is to set up a framework for formal group laws with a $G$-action.  We then use this framework to study formal group laws equipped with a $C_{2^n}$-action that is compatible with the actions of certain cyclic groups of orders coprime to $2$ and the associated Galois groups.

\subsection{Formal group laws with $C_{2^n}$-action}
In this subsection, we carry out a discussion similar to that of \cite[Section 11.3]{HHR}.
By an \emph{even periodic graded ring} $R$, we mean a graded commutative $\Z_{(2)}$-algebra $R$ such that
\begin{enumerate}
\item $R_{2k+1}=0$ for all $k\in \Z$; and
\item there exists a unit in $R_2$.
\end{enumerate}
Note that condition (2) implies that $R_{2k} \cong R_0$ for all $k\in \Z$.

Consider the category $\mathcal{M}_{FG}^{h,\mathrm{per}}$, whose objects are triples $(R,u, F)$, where
\begin{enumerate}
\item $R$ is an even periodic graded ring;
\item $u\in R_2$ is a unit; and
\item $F$ is a homogenous formal group law of degree $-2$ over $R$.  That is,
\[F(x,y) \in (R[\![x,y]\!])_{-2},\]
where the formal variables $x$ and $y$ both have degree $-2$ in $R[\![x,y]\!]$.
\end{enumerate}
Morphisms in $\mathcal{M}_{FG}^{h,\mathrm{per}}$ are pairs
\[(f, \psi) \colon (R, u, F) \to (S, w, G),\]
where $f \colon R \to S$ is a graded ring homomorphism and
\[\psi \colon G \to f^*F \]
is a strict isomorphism of formal group laws. Note that there is no condition on the morphism relating $u$ and $w$.

Consider also the category $\mathcal{M}^{\mathrm{per}}_{FG}$, whose objects are triples $(R, u , \widetilde{F})$, where
\begin{enumerate}
\item $R$ is an even periodic graded ring;
\item $u \in R_2$ a unit; and
\item $\wt{F}$ is a formal group law of degree $0$ over $R_0$.  That is,
\[ \wt{F}(\wt{x}, \wt{y}) \in R_0[\![\wt{x}, \wt{y}]\!],\]
with $\wt{x}, \wt{y}$ of degree $0$  in $R_0[\![\wt{x}, \wt{y}]\!]$.
\end{enumerate}
Morphisms in this category are pairs
\[(f, \wt{\psi}) \colon (R, u, \wt{F}) \to (S, w, \wt{G})\]
with $f: R \longrightarrow S$ a homomorphism of graded rings and
\[\wt{\psi} \colon \wt{G} \to f^*\wt{F}  \]
an isomorphism of formal group laws, not necessarily strict, such that
\[f(u) = \wt{\psi}'(0)w.\]

There is an isomorphism of categories
\begin{equation}\label{eq:equicats}
\Psi :\xymatrix{ \mathcal{M}_{FG}^{\mathrm{per}} \ar@<1ex>[r]&  \mathcal{M}_{FG}^{h,\mathrm{per}} \ar@<1ex>[l]  } : \Phi,
\end{equation}
given by
\begin{align*}\Psi(R, u, \wt{F}(\wt x,\wt y)) &= (R, u^{-1}\wt{F}(u x ,u y)) , \\ \Psi( f, \wt{\psi}(\wt x)) &= (f,   (\wt{\psi}'(0) w)^{-1}\wt{\psi}(w x) ), \end{align*}
and
\begin{align*}\Phi(R, u, {F}(x,y)) &= (R, u{F}(u^{-1} \wt x ,u^{-1} \wt y)) , \\ \Phi( f, {\psi}(x)) &= (f,  f(u) \psi(w^{-1}\wt x)). \end{align*}

\begin{notation}\label{not:ungradedfgl}
For $(R,u,F) \in \mathcal{M}_{FG}^{h,\mathrm{per}}$, we let
\[\wt F (\wt x , \wt y) :=u F(u^{-1} \wt x, u^{-1}\wt y).\]
\end{notation}

Next, we turn to group actions on formal group laws.
\begin{definition}
An action of a group $G$ on a formal group law $F$ over $R$ is a functor
\[BG \to \cM_{FG}^{h,\mathrm{per}} \]
such that $(R,u,F)$ is the image of the unique object in $BG$.
\end{definition}

\begin{remark}
By the equivalence \eqref{eq:equicats}, an action on a homogenous formal group law in $ \cM_{FG}^{h,\mathrm{per}} $ corresponds to an action on the associated non-homogenous object in $ \cM_{FG}^{\mathrm{per}} $, and vice versa.
\end{remark}

\begin{definition}
For an even periodic graded ring $R$, let $c \colon R \to R$ be the ring isomorphism which is multiplication by $(-1)^{n}$ on $R_{2n}$. We call $c$ the \emph{involution}.
\end{definition}

For $(R,u, F) \in \cM_{FG}^{h,\mathrm{per}}$,
\begin{align*} c^*F &= -F(-x,-y).
\end{align*}
Furthermore, there is an automorphism
\[(c, c(x)) \colon (R, u, F) \to (R, u, F)\]
in $\cM_{FG}^{h,\mathrm{per}}$, where $c(x) = -[-1]_F(x)$. Since $c^2=\id$, this determines a functor
\[BC_2 \xrightarrow{(R,u,F)}\cM_{FG}^{h,\mathrm{per}}, \]
where the unique object in the category $BC_2$ is mapped to $(R, u, F)$ and the non-identity morphism is mapped to $(c, c(x))$.  In other words, every object of $\cM_{FG}^{h,\mathrm{per}}$ comes with a natural $C_2$-action.

\begin{definition}
The action of $C_2$ on $(R,u,F) \in \cM_{FG}^{h, \mathrm{per}}$ where the generator of $C_2$ acts by $(c, c(x))$ is called the \emph{conjugation action}.
\end{definition}

Recall that $\Rn =  \pi_*^e \BPCn$ and we are letting $\Fc$ denote the image of the universal $2$-typical formal group law under the left unit.  Let
\[\Rnper := \Rn[C_{2^n}\cdot u^{\pm 1}],\]
where $C_{2^n}\cdot u \subseteq (\Rn^{\per})_2$.

\begin{proposition}\label{prop:actionC2n}
Let $R$ be an even periodic graded ring with an action of $C_{2^n}$ that restricts to the involution action on $C_2$.  A $C_{2^n}$-equivariant graded ring homomorphism
$$f: \Rnper \longrightarrow R$$
determines a 2-typical formal group law $(R, f(u), f^*\Fc) \in \cM_{FG}^{h,\mathrm{per}}$.  This formal group law is equipped with a $C_{2^n}$-action by maps of 2-typical formal group laws that extends the $C_2$-conjugation action.
\end{proposition}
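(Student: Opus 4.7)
The plan is to extract this from the universal property of $\pi_*^e\BPCn$ as a $C_{2^n}$-equivariant graded ring, namely \cite[Proposition 11.28]{HHR}, which the excerpt has just recalled. First I would restrict $f$ along the inclusion $\Rn \hookrightarrow \Rnper$ to obtain a $C_{2^n}$-equivariant graded ring homomorphism $f|_{\Rn}\colon \Rn \to R$. That universal property classifies this restriction in terms of a $2$-typical formal group law $f^*\Fc$ over $R$ together with a compatible family of strict isomorphisms
\[
\psi_i^R \colon (f^*\Fc)^{\gamma_n^{i-1}} \longrightarrow (f^*\Fc)^{\gamma_n^{i}}, \qquad 1 \leq i \leq 2^{n-1},
\]
whose total composition is the canonical strict isomorphism associated to the formal inversion on $f^*\Fc$. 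The extension of $f$ to $\Rnper$ additionally prescribes the unit $f(u) \in R_2$, so we obtain the object $(R, f(u), f^*\Fc) \in \cM_{FG}^{h,\per}$.

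Next I would define the $C_{2^n}$-action on this object by declaring that $\gamma_n^k$ acts by the endomorphism
\[
\bigl(f_{\gamma_n^k}^R,\; \psi_k^R \circ \cdots \circ \psi_1^R\bigr)
\]
in $\cM_{FG}^{h,\per}$. The $C_{2^n}$-equivariance of $f$ gives $(f_{\gamma_n^k}^R)^*(f^*\Fc) = f^*(\Fc^{\gamma_n^k})$, so the composed strict isomorphism has the correct codomain. Functoriality on $BC_{2^n}$ reduces to the composition rule in $\cM_{FG}^{h,\per}$: iterating the action of $\gamma_n$ produces precisely the displayed composition of $\psi_i^R$'s. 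Since each $\psi_i^R$ is the pullback of a strict isomorphism between $2$-typical formal group laws over $\Rn$, this defines an action by maps of $2$-typical formal group laws.

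The last and most delicate point is to verify that the action extends the conjugation action of the central $C_2 \subseteq C_{2^n}$ generated by $\gamma_n^{2^{n-1}}$. The hypothesis on $R$ identifies $f_{\gamma_n^{2^{n-1}}}^R$ with the involution $c$, and \cite[Proposition 11.28]{HHR} identifies the total composition $\psi_{2^{n-1}}^R \circ \cdots \circ \psi_1^R$ with the canonical strict isomorphism $f^*\Fc \to c^*(f^*\Fc)$ corresponding to ``formal inversion''. Since a composition of strict isomorphisms has derivative $1$ at the origin, this map must be $-[-1]_{f^*\Fc}(x) = c(x)$, not $[-1]_{f^*\Fc}(x)$; thus $\gamma_n^{2^{n-1}}$ acts by the conjugation morphism $(c, c(x))$. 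I expect the main obstacle to be bookkeeping: carefully matching the sign convention in the ``formal inversion'' clause of \cite[Proposition 11.28]{HHR} with the formula $c(x) = -[-1]_F(x)$ in the definition of conjugation, and lining up pullback and composition conventions with those used for $\cM_{FG}^{h,\per}$.
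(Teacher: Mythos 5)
Your proposal is correct and follows the same route as the paper, which simply cites \cite[Proposition 11.28]{HHR} adapted to the $2$-typical periodic setting and reads off the action $(f_{\gamma},\psi_{\gamma})$ from the classifying data. The extra detail you supply (restricting along $\Rn\hookrightarrow\Rnper$, assembling the action from the composites of the $\psi_i^R$, and matching the strictness of the total composite with $c(x)=-[-1]_{f^*\Fc}(x)$) is accurate bookkeeping that the paper leaves implicit.
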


This follows from \cite[Proposition 11.28]{HHR}, adapted to the $2$-typical, periodic case.
In this section, we abbreviate $\gamma=\gamma_n$ for the generator of $C_{2^n}$. Let
\[(f_\gamma, \psi_{\gamma}(x))  \colon (\Rnper,  u, \Fc ) \to  (\Rnper,u , \Fc )\]
(where $\psi_{\gamma}$ is as in \eqref{eq:psi}) be the action induced by $\gamma$, so that
$$\psi_\gamma: \Fc \longrightarrow \Fc^\gamma = f_\gamma^* \Fc$$
is the strict isomorphism.  We will sometimes abuse notation and write $\gamma = f_\gamma$.  Note that $\Fc$ has coefficients in $BP_* = \Rone \subset \Rn$ and $\psi_{\gamma}$ has coefficients in $\Rn$.

\subsection{Compatible action by roots of unity}
Let $k$ be a finite field of characteristic $2$ and $W(k)$ the ring of Witt vectors over $k$. For a $\Z_{(2)}$-algebra $R$, let
\[\Rk := W(k)\otimes_{\Z_{(2)}} R.\]
Note that there is a unique group homomorphism
\[T \colon k^{\times} \to \Rk_0^{\times},\]
which defines the Teichm\"uller lifts. This allows us to embed $k^{\times}$ in $\Rk$.

There is an action of $k^{\times}$ on $(\Roneper(k), {u}, {\Fc}) \in \cM_{FG}^{h, \mathrm{per}}$ given as follows. Here,
\[\Roneper(k) \cong W(k) [v_1, v_2, \ldots][u^{\pm 1}].\]
Let $\zeta \in k^{\times}$. Then
\[f_\zeta \colon \Roneper(k) \to \Roneper(k)\]
is the $W(k)$-linear map that is determined by
\begin{align}\label{eq:actionfomega}
f_\zeta({u}) &= \zeta^{-1} {u},   \\
f_\zeta({v}_i)&=v_i. \nonumber
\end{align}
Since $\Fc$ is defined over $\Rone$, $f_\zeta^*\Fc=\Fc$. Let
\[{\psi}_{\zeta} \colon \Fc \to  f_\zeta^*\Fc=\Fc\]
be the strict isomorphism given by the identity ${\psi}_{\zeta}(x)=x$. Then the pair $(f_\zeta, {\psi}_{\zeta})$ is an automorphism of $(\Roneper(k), {u}, {\Fc})$ and this defines an action of $k^{\times}$ on the object $(\Roneper(k), u, {\Fc})$ of $ \cM_{FG}^{h, \mathrm{per}}$.

As an immediate consequence, we have:
\begin{proposition}\label{prop:Caction}
Let $R$ be an even periodic graded ring with an action of a subgroup $C\subseteq k^{\times}$. A $C$-equivariant ring homomorphism $f \colon \Roneper(k) \to R$ determines a $2$-typical formal group law $(R, f(u), f^*{\Fc})$ with a $C$-action by maps of $2$-typical formal group laws.
\end{proposition}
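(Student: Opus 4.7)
The plan is to transport the $C$-action on $(\Roneper(k), u, \Fc)$ constructed in the paragraph just above the statement to an action on $(R, f(u), f^*\Fc)$ by pushforward along $f$, using the $C$-equivariance of $f$ to guarantee that the required morphisms make sense.

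First I would check that $(R, f(u), f^*\Fc)$ is indeed an object of $\cM_{FG}^{h,\mathrm{per}}$. Since $f$ is a graded ring homomorphism, $f^*\Fc$ inherits from $\Fc$ the property of being a homogeneous $2$-typical formal group law of degree $-2$, and $f(u)\in R_2$ is a unit because $u\in(\Roneper(k))_2$ is a unit and $R$ is even periodic with a unit in degree $2$. So the triple lies in $\cM_{FG}^{h,\mathrm{per}}$.

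Next, for each $\zeta\in C$ let $g_\zeta\colon R\to R$ be the ring automorphism specifying the action of $\zeta$ on $R$. The hypothesis that $f$ is $C$-equivariant means $g_\zeta\circ f = f\circ f_\zeta$, and therefore
\[
g_\zeta^{\,*}(f^*\Fc) \;=\; f^*\bigl(f_\zeta^{\,*}\Fc\bigr) \;=\; f^*\Fc,
\]
where the last equality uses that $f_\zeta^{\,*}\Fc=\Fc$ because $\Fc$ has coefficients in $\Rone$ and, by \eqref{eq:actionfomega}, $f_\zeta$ fixes the $v_i$. Consequently the identity power series $\psi(x)=x$ is a strict isomorphism $f^*\Fc\to g_\zeta^{\,*}(f^*\Fc)$, so the pair $(g_\zeta,x)$ is a well-defined morphism in $\cM_{FG}^{h,\mathrm{per}}$ from $(R,f(u),f^*\Fc)$ to itself.

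Finally, I would verify that $\zeta\mapsto (g_\zeta,x)$ assembles into a functor $BC\to\cM_{FG}^{h,\mathrm{per}}$. Since $C$ acts on $R$ we have $g_{\zeta\eta}=g_\zeta\circ g_\eta$ and $g_1=\id$, and the strict isomorphism component is the identity in every case, so functoriality is automatic from the composition law in $\cM_{FG}^{h,\mathrm{per}}$. I do not expect any genuine obstacle: the proof is a formal pushforward of the already-constructed action on $(\Roneper(k),u,\Fc)$, with $C$-equivariance of $f$ doing all the actual work.
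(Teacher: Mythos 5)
Your proposal is correct and matches the paper's intent: the paper states this proposition as an ``immediate consequence'' of the construction of the $k^{\times}$-action on $(\Roneper(k),u,\Fc)$ in the preceding paragraph, and your argument is exactly the pushforward along the $C$-equivariant map $f$ that makes that immediacy precise. The key identities you check --- $g_\zeta^*(f^*\Fc)=f^*(f_\zeta^*\Fc)=f^*\Fc$ and the functoriality of $\zeta\mapsto(g_\zeta,x)$ --- are the whole content, so there is nothing missing.
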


\begin{remark}
Under the functor $\Phi \colon \mathcal{M}_{FG}^{h, \per} \to \mathcal{M}_{FG}^{\per}$, the morphism $(f_{\zeta}, \psi_{\zeta}(x))$ maps to $(f_{\zeta}, \wt{\psi}_{\zeta} )$ where $\wt{\psi}_{\zeta}(\wt x) = \zeta^{-1}\wt x$.
\end{remark}

Finally, we want to extend this to an action of $(\Rnper(k), u, {\Fc})$ in a way that commutes with the $C_{2^n}$-action defined in the last subsection. Here, note that
\begin{align}\label{eq:Rnk} \Rnper(k)\cong W(k) [C_{2^n} \cdot t_1^{C_{2^n}}, C_{2^n} \cdot t_2^{C_{2^n}}, \ldots][C_{2^n}\cdot u^{\pm 1}].\end{align}
We will define an action of
\[ C_{2^n} \times k^\times\]
on  $(\Rnper(k), u, {\Fc})$.
First, extend the $C_{2^n}$-action on $\Rnper$ to $\Rnper(k)$ linearly with respect to $W(k)$.
Since elements of $C_{2^n}$ and $k^{\times}$ commute in the product, our definition of the action $f_{\zeta}$ must satisfy
\begin{equation}\label{eq:actionscomm}
f_\zeta(f_{\gamma} (x)) = f_{\gamma}(f_\zeta(x))\end{equation}
for all $\zeta\in k^{\times}$.
We let $f_{\zeta}$ be as in \eqref{eq:actionfomega} on $\Roneper(k) \subseteq \Rnper(k)$.
Also, let
\[f_\zeta(t_i^{C_{2^n}}) = t_i^{C_{2^n}}\]
for all $i \geq 1$. Then, the identity \eqref{eq:actionscomm} determines the action of $C_{2^n} \times k^\times$ on all of $\Rnper(k)$. Note in particular that the $k^{\times}$-action fixes $\Rn \subset \Rnper(k)$.

Since the inclusion $\Roneper(k) \to \Rnper(k)$ is a $k^\times$-equivariant map,
$ (\Rnper(k), u,  {\Fc} )$ inherits a $k^\times$-action from $(\Roneper(k), u, \Fc)$ by Proposition~\ref{prop:Caction}.

\begin{proposition}
The formulas above give an action of $C_{2^n} \times k^\times$  on  the object $(\Rnper(k), u,  {\Fc} )$ of $\mathcal{M}_{FG}^{h,\mathrm{per}}$.
\end{proposition}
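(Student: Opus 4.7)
The plan is to verify the three things required to promote the raw formulas into an honest functor $B(C_{2^n} \times k^\times) \to \mathcal{M}_{FG}^{h,\per}$: first, that each $f_\zeta$ is a well-defined graded $W(k)$-algebra automorphism of $\Rnper(k)$ commuting with the $C_{2^n}$-action; second, that the pair $(f_\zeta, \psi_\zeta)$ with $\psi_\zeta(x)=x$ is an automorphism in $\mathcal{M}_{FG}^{h,\per}$ and that $\zeta \mapsto (f_\zeta,\psi_\zeta)$ is a group homomorphism; third, that the two actions commute in $\mathcal{M}_{FG}^{h,\per}$ (not only after forgetting the formal group data).

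For the first step, I would use the presentation \eqref{eq:Rnk} of $\Rnper(k)$ as a $W(k)$-algebra generated by the $C_{2^n}$-orbits of $t_i^{C_{2^n}}$ and of $u^{\pm 1}$. The required commutation $f_\zeta\circ f_\gamma = f_\gamma\circ f_\zeta$ forces the values $f_\zeta(\gamma^j u)=\zeta^{-1}\gamma^j u$ and $f_\zeta(\gamma^j t_i^{C_{2^n}})=\gamma^j t_i^{C_{2^n}}$. The only relations these generators satisfy come from $\gamma^{2^{n-1}}x=-x$ for $x=u$ and $x=t_i^{C_{2^n}}$; both are preserved because $f_\zeta$ is $W(k)$-linear, and $f_\zeta(-u)=-\zeta^{-1}u=\zeta^{-1}(-u)$. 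So $f_\zeta$ extends uniquely to a graded $W(k)$-algebra automorphism, and $f_{\zeta_1\zeta_2}=f_{\zeta_1}\circ f_{\zeta_2}$ is then immediate on generators.

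For the second step, I would observe that $\Fc$ has coefficients in $\Rone=BP_*\subset\Rn$, and $f_\zeta$ fixes $W(k)$ and fixes each $v_i$, hence fixes $\Rn$ pointwise. Therefore $f_\zeta^*\Fc=\Fc$, making $\psi_\zeta(x)=x$ a valid strict isomorphism $\Fc\to f_\zeta^*\Fc$. Composition of $(f_{\zeta_2},\psi_{\zeta_2})$ after $(f_{\zeta_1},\psi_{\zeta_1})$ in $\mathcal{M}_{FG}^{h,\per}$ gives the pair $(f_{\zeta_2}\circ f_{\zeta_1},(f_{\zeta_2}^*\psi_{\zeta_1})\circ\psi_{\zeta_2})=(f_{\zeta_1\zeta_2},x)=(f_{\zeta_1\zeta_2},\psi_{\zeta_1\zeta_2})$, so we obtain a functor $Bk^\times\to\mathcal{M}_{FG}^{h,\per}$.

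For the third step, which is the subtle part, unpacking the composition law in $\mathcal{M}_{FG}^{h,\per}$ shows that $(f_\zeta,\psi_\zeta)\circ(f_\gamma,\psi_\gamma)=(f_\gamma,\psi_\gamma)\circ(f_\zeta,\psi_\zeta)$ amounts to the identity of ring maps $f_\zeta\circ f_\gamma=f_\gamma\circ f_\zeta$, already in hand, together with the identity of strict isomorphisms
\[
(f_\zeta^*\psi_\gamma)\circ\psi_\zeta \;=\; (f_\gamma^*\psi_\zeta)\circ\psi_\gamma.
\]
Since $\psi_\zeta(x)=x$, the right-hand side is $\psi_\gamma$. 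For the left-hand side, the coefficients $t_i^{C_{2^n}}$ of $\psi_\gamma$ lie in $\Rn$ and are fixed by $f_\zeta$, so $f_\zeta^*\psi_\gamma=\psi_\gamma$ and the left-hand side is also $\psi_\gamma$. The only genuine obstacle is precisely this last verification, which rests entirely on the fact that the $k^\times$-action fixes the orbit generators $t_i^{C_{2^n}}$ by construction; given this, the compatibility is automatic. Extending to all of $C_{2^n}\times k^\times$ by the group-homomorphism properties of each factor completes the construction of the functor $B(C_{2^n}\times k^\times)\to\mathcal{M}_{FG}^{h,\per}$.
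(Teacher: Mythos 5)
Your proposal is correct and its decisive step — checking that the two morphisms commute by noting $f_\zeta f_\gamma=f_\gamma f_\zeta$, that $f_\zeta^*\psi_\gamma=\psi_\gamma$ because the coefficients of $\psi_\gamma$ lie in $\Rn$ which $f_\zeta$ fixes, and that $\psi_\zeta(x)=x$ makes the power-series compositions agree — is exactly the paper's argument. The extra preliminary verifications you include (well-definedness of $f_\zeta$ and the homomorphism property) are fine but are taken as already established in the paper's surrounding construction.
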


\begin{proof}
It remains to verify that $(f_{\zeta}, {\psi}_{\zeta})$ and $(f_{\gamma}, {\psi}_{\gamma})$ commute, i.e., that the morphism
\[(f_\zeta, \psi_\zeta( x))(f_{\gamma}, \psi_\gamma( x) ) = ( f_\zeta f_{\gamma},f_{\zeta}^*\psi_\gamma(  \psi_\zeta( x))), \]
which is the composite
\[ \xymatrix{  \Fc \ar[r]^-{\psi_\zeta} & f_\zeta^*   \Fc \ar[r]^-{f_\zeta^*\psi_{\gamma}} &  f_\zeta^* f_{\gamma}^* \Fc  }, \]
is equal to the morphism
\[(f_\gamma, \psi_\gamma( x))(f_{\zeta}, \psi_\zeta( x) ) = ( f_\gamma f_{\zeta},f_{\gamma}^*\psi_\zeta(  \psi_\gamma( x))) ,\]
which is the composite
\[ \xymatrix{  \Fc \ar[r]^-{\psi_\gamma} & f_\gamma^*   \Fc \ar[r]^-{f_\gamma^* \psi_{\zeta}} &  f_\gamma^* f_{\zeta}^* \Fc  }. \]
 By construction, $f_{\zeta} f_{\gamma} = f_{\gamma}f_{\zeta}$.  Since $\psi_{\gamma}$ is defined over $\Rn$,  $f_{\zeta}^*\psi_\gamma=\psi_\gamma$. Also, $f_\gamma^*\psi_\zeta = \psi_\zeta$ since $\psi_\zeta (x)=x$.
Finally, $ \psi_\gamma(  \psi_\zeta( x))  = \psi_\zeta(  \psi_\gamma( x)) $ since $ \psi_\zeta(x)=x$ is the identity for composition of power series.
\end{proof}

\subsection{Action of the Galois group}
The Galois group $\Gal=\Gal(k/\F_2)$ acts on $W(k)$, and this gives an action of $\Gal$ on $\Rnper(k)$ by acting on the coefficients.
Let $\sigma\in \Gal$ be the Frobenius. The action of $\sigma$ on $\Rnper(k)$ is a ring isomorphism which we denote by $f_{\sigma}$. Note that
\[ \Rnper(k)^{\Gal} =\Z_2\otimes_{\Z_{(2)}}\Rnper = \Rnper(\F_2) . \]
Since $\Fc$ is defined over $\Rn$, $f_{\sigma}^*\Fc =\Fc$. So, letting $\psi_{\sigma}(x)=x$, we get an action of $\Gal$ on $(\Rnper(k),u, \Fc )$ via  the morphism
\[(f_\sigma, \psi_{\sigma}) \colon \Fc \to \sigma^*\Fc .\]
The group $\Gal$ acts on $k^{\times}$ via its action on $k$.
This extends the actions of $\Gal$ and $k^{\times}$ to an action of $\Gal \ltimes k^{\times}$ on $(\Rnper(k),u , \Fc)$. Both the $\Gal$-action and the $k^{\times}$-action commute with the $C_{2^n}$-action, so we get an action of
\[  \Gk:=C_{2^n} \times \Ckm\]
on $(\Rnper(k),u,\Fc)$, where $\Ckm = \Gal\ltimes k^{\times}$.

We now have the following result, which combines all of these actions:
\begin{proposition}\label{prop:keyhomomorphism}
Let $R$ be an even periodic graded ring with an action of $\Gk$
which restricts to the involution on $C_2$.
A $\Gk$-equivariant ring homomorphism
\[ f \colon \Rnper(k) \to R \]
determines a formal group law $(R, f(u), f^*\Fc)$ over $R$ with an action of $\Gk$ by maps of $2$-typical formal group laws that extends the $C_2$-conjugation action.
\end{proposition}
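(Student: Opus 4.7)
The plan is to assemble the three previously constructed actions (of $C_{2^n}$, of $k^\times$, and of $\Gal$) on $(\Rnper(k), u, \Fc)$ into a single $\Gk$-action and then transport it across the map $f$ via functoriality. The proposition essentially collects the preceding constructions, so the main work is verifying that the three actions already defined on $\Rnper(k)$ do commute pairwise and that ``$\Gk$-equivariance'' of $f$ is exactly what is needed to push the action forward.

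First I would verify that $(\Rnper(k), u, \Fc)$ genuinely carries a $\Gk$-action in $\cM_{FG}^{h,\mathrm{per}}$, i.e., that the formulas $(f_\gamma, \psi_\gamma)$, $(f_\zeta, \psi_\zeta)$, and $(f_\sigma, \psi_\sigma)$ assemble to a functor $B\Gk \to \cM_{FG}^{h,\mathrm{per}}$. The commutation of $C_{2^n}$ with $k^\times$ was already proved just above using that $\psi_\zeta(x) = x$ and that $\psi_\gamma$ has coefficients in $\Rn$ (hence is fixed by $k^\times$). The same argument shows $C_{2^n}$ commutes with $\Gal$: the Frobenius $f_\sigma$ acts trivially on $\Rn \subset \Rnper(k)$, so $f_\sigma^*\psi_\gamma = \psi_\gamma$, while $\psi_\sigma(x) = x$ is fixed by any $f_g^*$. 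The semidirect product relation inside $\Ckm = \Gal \ltimes k^\times$ follows from the standard formula $f_\sigma \circ f_\zeta \circ f_\sigma^{-1} = f_{\sigma(\zeta)}$ on Teichm\"uller lifts, with all intertwining strict isomorphisms equal to the identity.

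Next, given a $\Gk$-equivariant ring homomorphism $f \colon \Rnper(k) \to R$, I would push the action forward as follows. For each $g \in \Gk$, let $\phi_g \colon R \to R$ denote the action of $g$ on $R$. The equivariance of $f$ means $\phi_g \circ f = f \circ f_g$ as graded ring homomorphisms, so
\[
\phi_g^* (f^*\Fc) = (\phi_g \circ f)^* \Fc = (f \circ f_g)^* \Fc = f^* (f_g^* \Fc).
\]
Hence applying $f^*$ to the strict isomorphism $\psi_g \colon \Fc \to f_g^*\Fc$ produces a strict isomorphism
\[
f^*\psi_g \colon f^*\Fc \longrightarrow \phi_g^*(f^*\Fc),
\]
and the pair $(\phi_g, f^*\psi_g)$ is a morphism from $(R, f(u), f^*\Fc)$ to itself in $\cM_{FG}^{h,\mathrm{per}}$. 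Functoriality of $f^*$ and the cocycle relation for $\psi_{gh}$ on $\Rnper(k)$ give
\[
(\phi_{gh}, f^*\psi_{gh}) = (\phi_g, f^*\psi_g) \circ (\phi_h, f^*\psi_h),
\]
so we obtain the required functor $B\Gk \to \cM_{FG}^{h,\mathrm{per}}$ sending the unique object to $(R, f(u), f^*\Fc)$.

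Finally, I would check compatibility with the conjugation action on $C_2 \subset C_{2^n}$. By construction, the $C_{2^n}$-action on $\Rnper(k)$ restricts on the central $C_2$ to the involution, with strict isomorphism $\psi_{\gamma^{2^{n-1}}}(x) = -[-1]_\Fc(x)$. By hypothesis the $\Gk$-action on $R$ also restricts to the involution on this $C_2$, so the pushed-forward morphism $(\phi_c, f^*\psi_c)$ for the generator $c \in C_2$ is precisely the conjugation automorphism of $(R, f(u), f^*\Fc)$. The main (and only) obstacle is organizing the bookkeeping of the three commuting actions; the essential content is the pairwise commutation on $\Rnper(k)$, which was arranged by defining $\psi_\zeta = \psi_\sigma = \id$ and placing $\psi_\gamma$ over the $\Gal \ltimes k^\times$-fixed subring $\Rn$.
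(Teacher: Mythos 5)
Your proposal is correct and follows essentially the same route as the paper, which states this proposition without a separate proof precisely because it is the assembly of the preceding constructions (Proposition~\ref{prop:actionC2n} via \cite[Proposition 11.28]{HHR}, Proposition~\ref{prop:Caction}, and the commutation checks for the $C_{2^n}$, $k^\times$, and $\Gal$ actions, all arranged by taking $\psi_\zeta=\psi_\sigma=\id$ and $\psi_\gamma$ defined over $\Rn$). Your explicit pushforward argument $(\phi_g, f^*\psi_g)$ using $\phi_g\circ f=f\circ f_g$ is exactly the mechanism underlying the cited result, so there is nothing to add.
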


In the next sections, we will be considering quotients of the ring $\Rn$, and the action of $\Gk$ does not descend to these quotients. However, the action of certain subgroups of $\Gk$ does.

Let $m \geq 1$ and $q=2^m-1$. Let $k^{\times}[q] \subseteq k^{\times}$ be the $q$-torsion, that is, the subgroup of elements $\zeta\in k^{\times}$ so that $\zeta^{q}=1$.
Let  $\Gkm \subseteq \Gk$ be the subgroup
\begin{align}\label{eq:Gkm}
\Gkm=C_{2^n} \times (\Gal\ltimes k^{\times}[q]).\end{align}
Now, let
\begin{align}\label{eq:Rnkm}
\Rnper(k)\langle m \rangle :=W(k)[C_{2^n} \cdot t_1^{C_{2^n}},\ldots, C_{2^n} \cdot t_m^{C_{2^n}}][C_{2^n} \cdot u^{\pm 1}]/(C_{2^n} \cdot (t_m^{C_{2^n}}-u^{2^m-1})).
\end{align}
\begin{proposition}\label{prop:killm}
The ring $\Rnper(k)\langle m \rangle $ is the quotient of $\Rnper(k) $ by an invariant ideal, and thus inherits an action of $\Gkm$ via the quotient map.
\end{proposition}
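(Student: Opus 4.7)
The plan is to identify explicitly the ideal that cuts out $\Rnper(k)\langle m \rangle$ inside $\Rnper(k)$ and then verify its invariance under each of the three constituent subgroups $C_{2^n}$, $\Gal$, and $k^\times[q]$ of $\Gkm$ separately. Comparing the presentations in \eqref{eq:Rnk} and \eqref{eq:Rnkm}, the relevant ideal is
\[
J = \bigl( C_{2^n}\cdot t_{m+1}^{C_{2^n}},\ C_{2^n}\cdot t_{m+2}^{C_{2^n}},\ \ldots \bigr) + \bigl( C_{2^n}\cdot (t_m^{C_{2^n}} - u^{2^m-1}) \bigr),
\]
and one has $\Rnper(k)\langle m\rangle \cong \Rnper(k)/J$.

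The verification proceeds subgroup by subgroup. For the $C_{2^n}$-action, each generator $\gamma_n^j t_i^{C_{2^n}}$ with $i>m$ is sent by $f_{\gamma_n}$ to either $\gamma_n^{j+1} t_i^{C_{2^n}}$ or $-t_i^{C_{2^n}}$, both of which already lie in $J$; a similar observation handles $\gamma_n^j(t_m^{C_{2^n}} - u^{2^m-1})$, using that $f_{\gamma_n}$ acts on $u$ by the same sign convention as on the $t_i^{C_{2^n}}$, so that when the sign flip occurs at $j=2^{n-1}-1$ one gets $-t_m^{C_{2^n}} - (-u)^{2^m-1} = -(t_m^{C_{2^n}} - u^{2^m-1})$ since $2^m-1$ is odd. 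For the Galois action, $f_\sigma$ acts only on the coefficients $W(k)$ and fixes each $t_i^{C_{2^n}}$ and each $u$, so every generator of $J$ is fixed.

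The key computation, and the reason for restricting to the $q$-torsion subgroup, is the invariance under $k^\times[q]$. For $\zeta \in k^\times$, the action fixes each $t_i^{C_{2^n}}$ and sends $u \mapsto \zeta^{-1}u$, hence
\[
f_\zeta(t_m^{C_{2^n}} - u^{2^m-1}) = t_m^{C_{2^n}} - \zeta^{-(2^m-1)} u^{2^m-1} = t_m^{C_{2^n}} - \zeta^{-q} u^{2^m-1},
\]
which equals $t_m^{C_{2^n}} - u^{2^m-1}$ precisely when $\zeta^q=1$, i.e.\ when $\zeta \in k^\times[q]$. The analogous computation for $\gamma_n^j u$ (which transforms under $f_\zeta$ only by scaling by $\zeta^{-1}$, commuting with the $C_{2^n}$-action) handles the rest of the orbit. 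Since the three subgroups generate $\Gkm$ and their actions mutually commute (or, in the case of $\Gal$ and $k^\times[q]$, form the stated semidirect product, which preserves $k^\times[q]$ setwise), the ideal $J$ is $\Gkm$-invariant and the quotient action is well defined. No step poses a serious obstacle here; the content of the statement is really that the exponent $q = 2^m-1$ appearing in the relation $t_m^{C_{2^n}} = u^{2^m-1}$ matches the $q$-torsion condition defining $k^\times[q]$ in \eqref{eq:Gkm}.
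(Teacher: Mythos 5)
Your proof is correct and follows essentially the same route as the paper: identify the ideal $J$ generated by the $C_{2^n}\cdot t_i^{C_{2^n}}$ for $i>m$ together with $C_{2^n}\cdot(t_m^{C_{2^n}}-u^{2^m-1})$, and observe that invariance under $k^\times[q]$ comes down to $f_\zeta(u^q)=\zeta^{-q}u^q=u^q$ for $\zeta$ a $q$th root of unity. The paper's proof is terser (it dismisses the first family of generators as invariant ``by definition'' and leaves the $C_{2^n}$- and $\Gal$-checks implicit), but you have correctly identified the one computation that carries the content of the statement.
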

\begin{proof}
The ideal $(C_{2^n}\cdot t_{m+1}^{C_{2^n}}, C_{2^n}\cdot t_{m+2}^{C_{2^n}}, \ldots)$ is invariant  by definition.
We also have that
\[f_\zeta(t_m^{C_{2^n}}) =t_m^{C_{2^n}}\]
and
\[f_\zeta(u^{q}) = \zeta^{-q}u^{q}=u^{q}\]
since $\zeta$ is a $q$th root of unity. It follows that the ideal
\[(C_{2^n} \cdot (t_m^{C_{2^n}}-u^{2^m-1})) = ( t_{m}^{C_{2^n}}-u^{q}, \gamma t_{m}^{C_{2^n}}-\gamma u^{q}, \ldots,  \gamma^{2^{n-1}-1} t_{m}^{C_{2^n}}- \gamma^{2^{n-1}-1}  u^{q})\]
is invariant.
\end{proof}

\begin{proposition}\label{cor:finalact}
Let $R$ be an even periodic graded ring with an action of $\Gkm$
which restricts to the involution on $C_2$.
A $\Gkm$-equivariant ring homomorphism
\[ f \colon \Rnper(k)\langle m\rangle \to R \]
gives rise to a formal group law $(R, f(u), f^*\Fc)$ over $R$ with an action of $\Gkm$ by maps of $2$-typical formal group laws that extends the $C_2$-conjugation action.
\end{proposition}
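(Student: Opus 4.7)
The plan is to obtain this as a direct corollary of Proposition~\ref{prop:keyhomomorphism} and Proposition~\ref{prop:killm}, by factoring the map $f$ through the tower
\[
\Rnper(k) \twoheadrightarrow \Rnper(k)\langle m\rangle \xrightarrow{\;f\;} R.
\]
The strategy is first to descend the $\Gkm$-action on $(\Rnper(k), u, \Fc)$ (obtained by restricting the $\Gk$-action constructed before Proposition~\ref{prop:keyhomomorphism}) to the quotient, and then to push it forward along $f$.

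First, I would restrict the $\Gk$-action on the object $(\Rnper(k), u, \Fc) \in \cM_{FG}^{h,\per}$ to the subgroup $\Gkm \subseteq \Gk$; by construction this is still an action by maps of $2$-typical formal group laws extending the $C_2$-conjugation. Next, by Proposition~\ref{prop:killm}, the kernel of the quotient map $\Rnper(k) \twoheadrightarrow \Rnper(k)\langle m\rangle$ is $\Gkm$-invariant, so the quotient map is a $\Gkm$-equivariant ring map. Since the formal group law $\Fc$ is defined over $\Rn \subseteq \Rnper(k)$ and each generator $f_g$ of the $\Gkm$-action either fixes or strictly isomorphs $\Fc$ via a series $\psi_g$ with coefficients in $\Rn$, these data descend along the quotient, giving $(\Rnper(k)\langle m\rangle, u, \Fc)$ the structure of an object of $\cM_{FG}^{h,\per}$ equipped with a $\Gkm$-action.

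Finally, given a $\Gkm$-equivariant ring homomorphism $f \colon \Rnper(k)\langle m\rangle \to R$, the composite functor
\[
B\Gkm \longrightarrow \cM_{FG}^{h,\per} \xrightarrow{\;f_*\;} \cM_{FG}^{h,\per}
\]
sends the unique object to $(R, f(u), f^*\Fc)$, where $f_*$ applies $f$ to the coefficients. That this composite is well-defined uses precisely the $\Gkm$-equivariance of $f$: for each $g \in \Gkm$, the morphism $(f \circ f_g, f^* \psi_g)$ in $\cM_{FG}^{h,\per}$ agrees with $(f_{g,R} \circ f, f^*\psi_g)$, where $f_{g,R}$ denotes the action of $g$ on $R$. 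Since the original $\Gk$-action on $(\Rnper(k), u, \Fc)$ was constructed to extend the $C_2$-conjugation, and this property is preserved by both restriction to $\Gkm$ and pushforward along $f$, the resulting $\Gkm$-action on $(R, f(u), f^*\Fc)$ extends the $C_2$-conjugation action as required.

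There is no real obstacle here beyond bookkeeping; the content of the proposition was essentially built into the construction of the $\Gk$-action, and the only new input is Proposition~\ref{prop:killm}, which ensures that passing to the quotient $\Rnper(k)\langle m\rangle$ does not destroy the equivariance.
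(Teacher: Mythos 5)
Your proposal is correct and matches the paper's intent exactly: the paper states this proposition without proof, treating it as an immediate consequence of Proposition~\ref{prop:keyhomomorphism} together with Proposition~\ref{prop:killm} (invariance of the ideal under $\Gkm$, which is why one must restrict from $\Gk$ to $\Gkm$ before descending). Nothing further is needed.
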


\subsection{Lubin--Tate theories and the Morava stabilizer group}

Consider the subcategory $\mathcal{M}_{FG}^{h,\per,c}$ of $\mathcal{M}_{FG}^{h,\per}$  whose objects are triples $(R,u, F)$ with  $R$  a complete local ring.  Morphisms in this subcategory are pairs $(f,\psi)$ where $f \colon R \to S$ is a continuous morphism, so that the image under $f$ of the maximal ideal of $R$ is contained in the maximal ideal of $S$.

Let $\mathcal{M}_{FG}$ be as in \cite[Section 11]{HHR}. Its objects are non-graded formal group laws over non-graded rings. Morphisms in $\mathcal{M}_{FG}$ are pairs $(f,\psi) : (R,F) \to (S,G)$ where $\psi \colon G \to f^*F$ is any non-graded isomorphism. Let $\mathcal{M}_{FG}^c$ be the subcategory of $\mathcal{M}_{FG}$, where we restrict as above to complete local rings and continuous homomorphisms.

Let $K = k[\bar u^{\pm 1}]$ be an even periodic graded ring with the property that $k$ is a finite field of characteristic $2$ and $\bar u$ has degree $2$. Then $\Gal(k/\F_2)$ acts on $K$ via its action on $k$.
Let $ \Gamma_h$ be a homogenous formal group law of height $h=2^{n-1}m$ defined over $K$. Then $(K,\bar u, \Gamma_h)\in \mathcal{M}_{FG}^{h,\per}$.

\begin{definition}\label{defn:moravastab}
The (big) Morava stabilizer group $\G(k, \Gamma_h)$ is the group of automorphisms of $(K,\bar u, \Gamma_h)\in\mathcal{M}_{FG}^{h,\per}$. Let $\mathbb{S}(k,\Gamma_h)$ be the subgroup whose elements are those the pairs of the form $(\id, \psi)$.
\end{definition}
\begin{remark}
Suppose that $\Gamma_h$ is defined over $K^{\Gal} \cong \F_2[\bar u^{\pm 1}]$. Then
there is a split exact sequence
\[\xymatrix{1 \ar[r] &\mathbb{S}(k,\Gamma_h) \ar[r] &\G(k, \Gamma_h) \ar[r] & \Gal(k/\F_2) \ar[r] & 1 \ .}  \]
\end{remark}

\begin{remark}
Using the equivalence $\Phi$ of \eqref{eq:equicats},
$\G(k, \Gamma_h)$ is the group of automorphisms of
\[(K,\bar u,  \wt \Gamma_h)\in\mathcal{M}_{FG}^{\per}\]
where
$ \wt \Gamma_h $ is as in Notation~\ref{not:ungradedfgl}.
This, in turn, is isomorphic to the group of automorphisms of the pair $(k,  \wt \Gamma_h) \in \mathcal{M}_{FG}$. So,  $\G(k, \Gamma_h)$ as defined above is just the usual (big) Morava stabilizer group. \end{remark}

\begin{remark}
Note that any automorphism of $K$ is continuous so  $\G(k, \Gamma_h)$  is also  the group of automorphisms of $(K,\bar u, \Gamma_h)\in\mathcal{M}_{FG}^{h,\per,c}$.
\end{remark}

Let $\Rkm$ be an even periodic complete local ring with maximal ideal $\mathfrak{m}$ and $u\in \Rkm_2$ a choice of unit. Let $p \colon \Rkm\to \Rkm/\mathfrak{m}$ be the quotient map and
\[\iota \colon K \xrightarrow{\cong} \Rkm/\mathfrak{m} \]
be an isomorphism such that $\iota(\bar u) = p(u)$. Let $F_h$ be a $2$-typical homogenous  formal group law over $R$.
Suppose further that
\[\iota^*\Gamma_h =p^* F_h\]
and that $(\Rkm_0, \wt F_h)$ is a universal deformation of $(k, \wt \Gamma_h)$ in the sense of Lubin and Tate \cite{lubintate}. The Lubin--Tate theorem implies that there are isomorphisms
\begin{align}\label{eq:morcont}
 \G(k, \Gamma_h) & \cong \Aut_{\mathcal{M}_{FG}^c}( (\Rkm_0, \wt F_h))
 \cong \Aut_{\mathcal{M}_{FG}^{h, \per,c}}( (\Rkm, u,  F_h)).\end{align}

\begin{remark}
Note that the data $(\Rkm, u, F_h)$ together with $\iota$ is not unique. However, it is unique up to unique $\star$-isomorphism. That is, for $i = 1, 2$, given two choices $(\Rkm_i, u_i, F_{h,i})$ with
\[\iota_i \colon K\xrightarrow{\cong} \Rkm_i/\mathfrak{m}_i \]
and $p_i\colon  \Rkm_i \to \Rkm_i/\mathfrak{m}_i$, there exists a unique isomorphism
\[ (f, \psi )  \colon (\Rkm_1, u_1, F_{h,1}) \to  (\Rkm_2, u_2, F_{h,2})   \]
with the following properties.
\begin{itemize}
\item $f$ is continuous, and so induces an isomorphism $\bar f$ on residue fields.
\item $\bar f \circ \iota_1 = \iota_2$
\item $p_2^*\psi \colon p_2^*F_{h,2} \to p_2^* f^*F_{h,1}$ is the identity on $\iota_2^*\Gamma_h$. This makes sense since both $p_2^*F_{h,2} = \iota_2^*\Gamma_h$ and
\[p_2^* f^*F_{h,1} = \bar f^*p_1^*F_{h,1}  = \bar f^*\iota_1^*\Gamma_h = \iota_2^*\Gamma_h.\]
\end{itemize}
\end{remark}


\section{Equivariant deformation parameters and recursion formulas}\label{sec:formulas}
Recall from Section~\ref{sec:intro} that
$$\Rn := \pi_*^e \BPCn = \mathbb{Z}_{(2)}[C_{2^n} \cdot t_1^{C_{2^n}}, C_{2^n} \cdot t_2^{C_{2^n}}, \ldots].$$
for all $n \geq 1$.  Under the inclusion map $\Rr \hookrightarrow \Rn$,
we can view the generators $C_{2^r} \cdot t_k^{C_{2^r}}$ of $\Rr$ as elements of $\Rn$ for every $1 \leq r \leq n$.

The goal of this section is to prove Theorem ~\ref{theorem:Mform}, which gives a recursive formula relating the $t_k^{C_{2^{r}}}$-generators for various values of $r$.  This formula will be essential for proving Theorem~\ref{thm:model}.  Recall that the ideals $I_k \subset \Rn$ are defined as
$$I_k := (2,v_1,\ldots, v_{k-1}),$$
where the elements $v_i \in \pi_*BP = \Rone \subset \Rn$ are the Araki generators, so the coefficients of the 2-series of the formal group law $\Fc$.

As stated in Theorem~\ref{theorem:Mform}, we prove that for every $n\geq 2$ and $k \geq 1$,
\begin{eqnarray*}
t_k^{C_{2^{n-1}}}
&\equiv& t_k^{C_{2^n}} + \gamma_n t_k^{C_{2^n}} + \sum_{j=1}^{k-1}\gamma_{n}{t}_{j}^{C_{2^n}} ({t}_{k-j}^{C_{2^n}})^{2^j} \pmod{I_k}.
\end{eqnarray*}
In this section, we will also prove the equality
$$v_k \equiv t_k^{C_2} \pmod{I_{k}}$$
for all $k \geq 1$.  Once we have established this, Equation~(\ref{eq:tkrecursionformula}) will give a formula for the $v_k$-generators in terms of the ${t}_{j}^{C_{2^n}}$-generators for all $n \geq 1$.

\subsection{The logarithms of $\Fc$ and $\Fc^{\mathrm{add}}$}
To prove Theorem~\ref{theorem:Mform}, we begin by studying the relationship between the logarithms of the formal group laws $\Fc$ and $\Fc^{\gamma_n}$.  
Since $\Rn$ is $2$-torsion free, the formal group law $\Fc$ admits a logarithm over $\Rn \otimes \mathbb{Q}$. This is an isomorphism
$$\begin{tikzcd} \log_{\Fc} \colon \Fc \ar[r, "\cong"]&  \Fc^{\mathrm{add}}, \end{tikzcd}$$
where $\Fc^{\mathrm{add}}(x,y)=x+y$ is the additive formal group law.  Define $\ell_i \in \Rn \otimes \Q$ to be the coefficients of the logarithm of $\Fc$:
\[\log_{\Fc} (x) =\sum_{i\geq 0} \ell_i x^{2^i},\]
and the elements $v_i \in \Rn$ to be the coefficients of the 2-series of $\Fc$:
\[{[}2{]}_{\Fc}(x) =  \sum_{i\geq 0}{}^{\Fc} v_i x^{2^i}.\]
These are the \emph{Araki generators}, \cite{Araki1973}. We provide the proof for the following standard result which will be crucial below. See also \cite[A2.2]{ravgreen}.
\begin{proposition}\label{prop:elldenom}
For every $k\geq 1$,
\begin{eqnarray}
2\ell_k &=& 2^{2^k}\ell_k+ \sum_{j=1}^{k-1} \ell_{k-j} v_{j}^{2^{k-j}} +v_k. \label{eqn:lkvk}
\end{eqnarray}
Furthermore,  there exists $x_k\in \Rn$ so that
\[ \ell_k  = 2^{-k} x_k\]
and $x_k \neq 0 $ modulo $2$. That is, the denominator of $\ell_k$ in $\Rn \otimes \Q$ is exactly $2^k$.
\end{proposition}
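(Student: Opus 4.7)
The plan is to derive the recursion by applying $\log_{\Fc}$ to the defining formula for the $2$-series, and then to bootstrap the denominator claim by induction on $k$.

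For the first identity, start from $[2]_{\Fc}(x) = \sum_{i\geq 0}{}^{\Fc} v_i x^{2^i}$ (with $v_0 = 2$) and apply $\log_{\Fc}$. Since $\log_{\Fc} \colon \Fc \xrightarrow{\cong} \Fc^{\mathrm{add}}$ is a strict isomorphism, it converts $+_{\Fc}$ into ordinary addition, yielding
\begin{align*}
2\log_{\Fc}(x) = \log_{\Fc}\bigl([2]_{\Fc}(x)\bigr) = \sum_{i\geq 0}\log_{\Fc}(v_i x^{2^i}) = \sum_{i,j \geq 0} \ell_j v_i^{2^j} x^{2^{i+j}}.
\end{align*}
Comparing coefficients of $x^{2^k}$ and using $\ell_0 = 1$ and $v_0 = 2$, the right-hand side evaluates to $2^{2^k}\ell_k + v_k + \sum_{j=1}^{k-1}\ell_j v_{k-j}^{2^j}$, which after reindexing $j \mapsto k-j$ in the middle sum is exactly the displayed recursion.

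For the denominator statement I would induct on $k$. The base case $k = 1$ reads $-2\ell_1 = v_1$, so $x_1 = -v_1$ is nonzero mod $2$ because $v_1$ is a polynomial generator of $BP_* \subset \Rn$. For the inductive step, rearrange the recursion as
\begin{align*}
\ell_k = \frac{1}{2(1 - 2^{2^k-1})}\Bigl(v_k + \sum_{j=1}^{k-1} \ell_{k-j} v_j^{2^{k-j}}\Bigr).
\end{align*}
The key numerical point is that $1 - 2^{2^k - 1}$ is a unit in $\Z_{(2)}$ congruent to $1 \pmod 2$, so the prefactor has $2$-adic valuation exactly $1$. Substituting $\ell_{k-j} = 2^{-(k-j)} x_{k-j}$ from the inductive hypothesis and clearing $2^k$ gives
\begin{align*}
x_k = (1 - 2^{2^k-1})^{-1}\Bigl(2^{k-1} v_k + \sum_{j=1}^{k-1} 2^{j-1} x_{k-j} v_j^{2^{k-j}}\Bigr) \in \Rn.
\end{align*}
Reducing mod $2$ kills every summand except the $j = 1$ term, leaving $x_k \equiv x_{k-1} v_1^{2^{k-1}} \pmod 2$. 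Since $x_{k-1}$ is nonzero mod $2$ by induction and $v_1$ is a polynomial generator, the product is nonzero in $\Rn/2$, completing the induction.

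The work is essentially $2$-adic bookkeeping, so the main technical point to verify is that $2(1 - 2^{2^k - 1})$ has $2$-adic valuation exactly $1$, and that the $j = 1$ summand uniquely realizes the maximum denominator $2^k$. The essential structural input at the end is that $\Rn/2$ is an integral domain (indeed a polynomial ring on the generators $C_{2^n} \cdot t_i^{C_{2^n}}$), so that the surviving mod-$2$ term cannot accidentally vanish.
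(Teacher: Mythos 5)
Your proposal is correct and follows essentially the same route as the paper: apply $\log_{\Fc}$ to the $2$-series, compare coefficients of $x^{2^k}$, then solve for $\ell_k$ and induct, observing that modulo $2$ only the $j=1$ term survives so that $x_k \equiv x_{k-1}v_1^{2^{k-1}}$. Your explicit remark that $\Rn/2$ is a polynomial ring (hence a domain) is a point the paper leaves implicit, but otherwise the arguments coincide.
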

\begin{proof}
Since $\log_{\Fc}$ is a homomorphism of formal group laws,
\[ \log_{\Fc}([2]_{\Fc}( x)) = [2]_{\Fc^{\mathrm{add}}}(\log_{\Fc}(x)) = 2\log_{\Fc}(x).\]
This implies that
\[ \sum_{k\geq 0}2 \ell_k x^{2^k}  = \log_{\Fc}\left( \sum_{i\geq 0}{}^{\Fc} v_i x^{2^i} \right)  = \sum_{i,j\geq 0}  \ell_j v_i^{2^j} x^{2^{i+j}} . \]
For $k\geq 1$, comparing the coefficients of $x^{2^k}$ on both sides of the equation above produces the equality
\begin{eqnarray*}
2\ell_k &=& 2^{2^k}\ell_k+ \sum_{j=1}^{k} \ell_{k-j} v_{j}^{2^{k-j}} \\
&=& 2^{2^k}\ell_k+ \sum_{j=1}^{k-1} \ell_{k-j} v_{j}^{2^{k-j}} + v_k.
\end{eqnarray*}
This proves the first claim.

To prove that the denominator of $\ell_k$ in $\Rn \otimes \mathbb{Q}$ is exactly $2^k$, we use induction on $k$.  For the base case, when $k =1$, the equation above gives the equality
\[\ell_1 = \frac{v_1}{2 -2^2} = \frac{-v_1}{2}.\]
For the induction step, suppose that for all $1\leq i\leq k-1$, we have $\ell_{i} = 2^{-i}x_i$ where $x_i \in \Rn$ and $x_i \neq 0 \pmod{2}$. Then solving for $\ell_k$ in the formula above gives
\begin{align*}
\ell_k &= \frac{1}{2-2^{2^k}}\left(\sum_{j=1}^{k}  2^{-(k-j)} x_{k-j} v_{j}^{2^{k-j}} \right)\\
& = 2^{-k} \frac{1}{1-2^{2^k-1}} \left( \sum_{j=1}^{k}  2^{j-1} x_{k-j} v_{j}^{2^{k-j}}\right).
\end{align*}
Let $x_k =\frac{1}{1-2^{2^k-1}} \left( \sum_{j=1}^{k}  2^{j-1} x_{k-j} v_{j}^{2^{k-j}}\right)$. Then, $x_k \in \Rn$ and $x_k = x_{k-1}v_1^{2^{k-1}}$ modulo $2$, which, by the induction hypothesis, is non-zero. This proves the second claim.
\end{proof}

\subsection{The $\Rn$-modules $L_k$ and the ideals $I_k$}
\begin{definition}\label{defn:Mk}
Let $L_k$ be the $\Rn$-submodule of $\Rn \otimes \Q$ generated by the elements $\{2, \ell_1, \ldots, \ell_{k-1}\}$.  More specifically, a generic element of $L_k$ has the form
$$r_0 \cdot 2 + r_1 \cdot \ell_1 + r_2 \cdot \ell_2 + \cdots + r_{k-1} \cdot \ell_{k-1},$$
where $r_i \in \Rn$ for all $0 \leq i \leq k-1$.

More generally, for $1 \leq r\leq n$ and $0 \leq j \leq 2^{r}-1$, let ${\gamma_r^j}L_k$ be the $\Rn$-submodule of $\Rn\otimes \Q$ generated by the elements $\{2, \gamma_r^j\ell_1,\ldots, \gamma_r^j\ell_{k-1}\}$.  A generic element of ${\gamma_r^j}L_k$ has the from
\[r_0 \cdot 2 + r_1 \cdot \gamma_r^j\ell_1 +\ldots + r_{k-1} \cdot \gamma_r^j\ell_{k-1},\]
where $r_i \in \Rn$ for all $0 \leq i \leq k-1$.
\end{definition}
We will first deduce an analogue of Theorem~\ref{theorem:Mform}, but modulo $\gamma_{n-1}L_{k}$ rather than $I_k$.
We will also establish that $\Rn \cap {\gamma_r} L_k =I_k$ for all $n$, $k$, and $1 \leq r\leq n$, thus the relevance of the modules  $\gamma_{r}L_{k}$. The first step for this is the following proposition.

\begin{proposition}\label{prop:danny1alt}\label{prop:LkcapR=Ik}
For every $k\geq 1$,
\[L_{k} \cap \Rn= I_k. \]
\end{proposition}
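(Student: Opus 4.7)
The plan is to induct on $k$, with the base case $L_1 = 2\Rn = I_1$ immediate. For the inductive step, assume $L_{k-1} \cap \Rn = I_{k-1}$ and derive the equality at level $k$. The forward inclusion $I_k \subseteq L_k \cap \Rn$ reduces to showing $v_{k-1} \in L_k$: the recursion \eqref{eqn:lkvk} applied at index $k-1$ writes
\[
v_{k-1} = (2 - 2^{2^{k-1}})\, \ell_{k-1} - \sum_{j=1}^{k-2} \ell_{k-1-j}\, v_j^{2^{k-1-j}},
\]
and every summand lies in $L_k$ because the indices $k-1-j$ range over $\{1,\dots,k-2\}$, so each $\ell_{k-1-j} \in L_{k-1} \subseteq L_k$.

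For the reverse inclusion $L_k \cap \Rn \subseteq I_k$, let $r = 2 s_0 + \sum_{i=1}^{k-1} s_i \ell_i \in \Rn$ with $s_i \in \Rn$. The first task is to show that the top coefficient $s_{k-1}$ is divisible by $2$. Using $\ell_i = x_i/2^i$ from Proposition~\ref{prop:elldenom}, I would multiply through by $2^{k-1}$ to obtain
\[
2^{k-1} r = 2^k s_0 + \sum_{i=1}^{k-2} s_i\, 2^{k-1-i}\, x_i + s_{k-1}\, x_{k-1},
\]
and reduce modulo $2$. For $k \geq 2$ the left side lies in $2\Rn$, while on the right only the term $s_{k-1} x_{k-1}$ fails to be manifestly divisible by $2$; hence $s_{k-1} x_{k-1} \equiv 0 \pmod{2}$. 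Since $\Rn$ is polynomial over $\Z_{(2)}$, the quotient $\Rn/2$ is an integral domain, and $x_{k-1} \not\equiv 0 \pmod 2$ by Proposition~\ref{prop:elldenom}. Therefore $s_{k-1} = 2 s_{k-1}'$ for some $s_{k-1}' \in \Rn$.

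With this divisibility in hand, I would substitute $s_{k-1} \ell_{k-1} = s_{k-1}' \cdot 2\ell_{k-1}$ and invoke \eqref{eqn:lkvk} once more to rewrite
\[
2\ell_{k-1} = v_{k-1} + \sum_{j=1}^{k-2} \ell_{k-1-j}\, v_j^{2^{k-1-j}} + 2^{2^{k-1}} \ell_{k-1}.
\]
Every term on the right other than $v_{k-1}$ lies in $L_{k-1}$: the sum because the indices $k-1-j$ range over $\{1,\dots,k-2\}$, and $2^{2^{k-1}} \ell_{k-1} = 2^{2^{k-1}-(k-1)} x_{k-1} \in 2\Rn \subseteq L_{k-1}$ since $2^{k-1} > k-1$ for $k \geq 2$. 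It follows that $r - s_{k-1}' v_{k-1} \in L_{k-1} \cap \Rn$, which by the inductive hypothesis equals $I_{k-1}$, so $r \in I_{k-1} + (v_{k-1}) = I_k$. The main obstacle is this divisibility step for $s_{k-1}$; it hinges on $x_{k-1}$ being a non-zero-divisor modulo $2$, which combines the polynomial structure of $\Rn$ with the exact-denominator statement of Proposition~\ref{prop:elldenom}.
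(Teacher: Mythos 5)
Your proof is correct and follows essentially the same inductive strategy as the paper's: isolate the top coefficient of $\ell_{k-1}$, show it is divisible by $2$, and use the identity $2\ell_{k-1}\equiv v_{k-1}\pmod{L_{k-1}}$ together with the inductive hypothesis to land in $I_{k-1}+(v_{k-1})=I_k$. Your clearing-of-denominators argument that $s_{k-1}$ is even (using that $\Rn/2$ is a domain and $x_{k-1}\not\equiv 0\bmod 2$) carefully justifies a step the paper asserts only by appealing to the exact denominator of $\ell_{k-1}$.
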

\begin{proof}
We prove the claim by using induction on $k$.  The base case when $k = 1$ is clear: an element in $L_1$ is of the form $r_0 \cdot 2$, where $r_0 \in \Rn$.  Therefore,
\[L_1= L_1 \cap \Rn = (2) = I_1.\]

Now, suppose that $L_{k-1} \cap \Rn = I_{k-1}$.  Let
\[ x = r_0 \cdot 2 + r_1 \cdot \ell_1 + \cdots  + r_{k-2} \cdot\ell_{k-2} + r_{k-1} \cdot\ell_{k-1}\]
be an element in $L_k \cap \Rn$.  By Proposition~\ref{prop:elldenom}, the denominator of $\ell_{i}$ is exactly $2^{i}$ for all $0 \leq i \leq k-1$.  Since $x\in \Rn$, we must have that $r_{k-1} = 2 r_{k-1}'$ for some $r_{k-1}' \in \Rn$.  We can rewrite $x$ as
\begin{align*}
x &= r_0 \cdot 2 + r_1 \cdot \ell_1 + \cdots + r_{k-2} \cdot \ell_{k-2} + 2 r_{k-1}' \cdot \ell_{k-1} \\
&= r_0 \cdot 2 + r_1 \cdot \ell_1 + \cdots + r_{k-2} \cdot \ell_{k-2} + r_{k-1}'\cdot (2\ell_{k-1}).
\end{align*}
Now, note that $r_0 \cdot 2 + r_1 \cdot \ell_1 + \cdots + r_{k-2} \cdot \ell_{k-2}$ is in $L_{k-1}$.  Furthermore, Equation~(\ref{eqn:lkvk}) implies that
\begin{align*}
2\ell_{k-1}  \equiv v_{k-1} \pmod{L_{k-1}}.
\end{align*}
Therefore,
\[x-r_{k-1}' \cdot v_{k-1} = \ell \in L_{k-1}. \]
Since the left hand side is in $\Rn$, so is the right hand side.  By the induction hypothesis, $L_{k-1} \cap \Rn = I_{k-1}$.  Therefore
$$x-v_{k-1} r_{k-1} \in I_{k-1}$$
and so $x\in I_{k}$.  This proves that $L_{k} \cap \Rn \subseteq I_k$. The other inclusion is an immediate consequence of  Proposition~\ref{prop:elldenom}.
\end{proof}

\subsection{Comparison of generators}
Next, we establish a formula which relates the $t_k^{C_{2^n}}$-generators and the $\ell_k$-generators. To do this, note that the logarithm  $\log_{\Fc^{\gamma_n}} \colon \Fc^{\gamma_n} \to \Fc^{\mathrm{add}}$ is given by
\begin{align*}
\log_{\Fc^{\gamma_n}}(x) &=\sum_{i\geq 0} (\gamma_{n}\ell_i) x^{2^i}.
\end{align*}
Furthermore, there is a commutative diagram
\begin{equation}\label{eq:logdiag}
\xymatrix{\Fc \ar[rr]^-{\psi_{\gamma_{n}}} \ar[dr]_-{\log_{\Fc}} & & \Fc^{\gamma_{n}} \ar[dl]^-{\log_{\Fc^{\gamma_{n}}}} \\
&\Fc^{\mathrm{add}} & } \end{equation}
From this, we deduce the following key formulas which will be used throughout this section.

\begin{proposition}\label{prop:keyformula}
For every $n\geq 1$,
\begin{eqnarray}
{\ell}_k - \gamma_{n}{\ell}_k  &=&    \sum_{j=0}^{k-1}\gamma_{n}{\ell}_j  ({t}_{k-j}^{C_{2^n}})^{2^j} \label{eq:3.5eq1},\\
 \gamma_{n}{\ell}_k- \gamma_{n-1}{\ell}_k  &=& \sum_{j=0}^{k-1}\gamma_{n-1}{\ell}_j  (\gamma_{n}{t}_{k-j}^{C_{2^n}})^{2^j} \label{eq:3.5eq2},\\
   {\ell}_k- \gamma_{{n-1}} {\ell}_k  &=& \sum_{j=0}^{k-1}\left( \gamma_{n}{\ell}_j  ({t}_{k-j}^{C_{2^n}})^{2^j} + \gamma_{{n-1}}{\ell}_j  (\gamma_{n}{t}_{k-j}^{C_{2^n}})^{2^j} \right). \label{eq:3.5eq3}
\end{eqnarray}
\end{proposition}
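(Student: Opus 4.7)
The strategy is to extract all three identities from the commutative triangle \eqref{eq:logdiag}, which, after unpacking, is simply the equality
\[
\log_{\Fc}(x) \;=\; \log_{\Fc^{\gamma_n}}\bigl(\psi_{\gamma_n}(x)\bigr).
\]
Since $\log_{\Fc^{\gamma_n}}$ is a strict isomorphism from $\Fc^{\gamma_n}$ to $\Fc^{\mathrm{add}}$, it converts the formal sum in the expression
\[
\psi_{\gamma_n}(x) \;=\; \sum_{i\geq 0}{}^{\Fc^{\gamma_n}} t_i^{C_{2^n}} x^{2^i}
\qquad(t_0^{C_{2^n}} = 1)
\]
into an ordinary sum. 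First I would carry out this expansion term by term, using the formula $\log_{\Fc^{\gamma_n}}(y) = \sum_{j\geq 0}(\gamma_n \ell_j)y^{2^j}$, to obtain
\[
\log_{\Fc^{\gamma_n}}\bigl(\psi_{\gamma_n}(x)\bigr) \;=\; \sum_{i\geq 0}\sum_{j\geq 0} (\gamma_n \ell_j)\,(t_i^{C_{2^n}})^{2^j}\, x^{2^{i+j}}.
\]

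Next I would equate coefficients of $x^{2^k}$ on both sides. The $i = 0$ term on the right contributes $\gamma_n \ell_k$, and the remaining terms give the sum indexed by $1 \leq i \leq k$, which after reindexing $j' = k-i$ yields
\[
\ell_k \;=\; \gamma_n \ell_k \;+\; \sum_{j=0}^{k-1} (\gamma_n \ell_j)\,(t_{k-j}^{C_{2^n}})^{2^j}.
\]
Rearranging gives \eqref{eq:3.5eq1}.

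To obtain \eqref{eq:3.5eq2}, I would apply the ring automorphism $f_{\gamma_n}$ to \eqref{eq:3.5eq1}. Since $\gamma_n^2 = \gamma_{n-1}$ in $C_{2^n}$ by definition, $f_{\gamma_n}$ sends $\ell_k \mapsto \gamma_n \ell_k$, $\gamma_n \ell_k \mapsto \gamma_{n-1}\ell_k$, and $t_i^{C_{2^n}} \mapsto \gamma_n t_i^{C_{2^n}}$; the formula follows directly. Finally, \eqref{eq:3.5eq3} is obtained by adding \eqref{eq:3.5eq1} and \eqref{eq:3.5eq2}, using the telescoping identity $(\ell_k - \gamma_n \ell_k) + (\gamma_n \ell_k - \gamma_{n-1}\ell_k) = \ell_k - \gamma_{n-1}\ell_k$ on the left-hand side.

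The only real point of care is bookkeeping: ensuring the $\Fc^{\gamma_n}$-formal sum inside $\psi_{\gamma_n}$ is converted to an ordinary sum before applying $\log_{\Fc^{\gamma_n}}$ termwise, and tracking the $2^j$-powers that appear when $\log_{\Fc^{\gamma_n}}$ is applied to a monomial $t_i^{C_{2^n}} x^{2^i}$. There is no chromatic or deformation-theoretic obstacle here; the content of the proposition is purely a coefficient comparison in $\Rn \otimes \Q \llbracket x \rrbracket$ together with one application of the $C_{2^n}$-action.
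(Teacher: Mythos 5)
Your proposal is correct and follows essentially the same route as the paper's proof: expand $\log_{\Fc}(x)=\log_{\Fc^{\gamma_n}}(\psi_{\gamma_n}(x))$ using the fact that the target is the additive formal group law, compare coefficients of $x^{2^k}$ to get \eqref{eq:3.5eq1}, then apply $f_{\gamma_n}$ (using $\gamma_n^2=\gamma_{n-1}$) for \eqref{eq:3.5eq2} and add for \eqref{eq:3.5eq3}. No issues.
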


\begin{proof}
 The commutativity of the diagram \eqref{eq:logdiag} implies
\begin{align*}
 \sum_{i\geq 0} {\ell}_i {x}^{2^i}  &= \log_{\Fc} (x  )  \\
  &= \log_{\Fc^{\gamma_n}}(\psi_{\gamma_n}(x)  )  \\
 &= \log_{\Fc^{\gamma_n}}\left( \sum_{i\geq 0}{}^{\Fc^{\gamma_n}} {t}_i^{C_{2^n}} x^{2^i} \right)  \\
 &=\sum_{i\geq 0}  \log_{\Fc^{\gamma_n}}({t}_i^{C_{2^n}}  x^{2^i} ) \\
 &=\sum_{i,j \geq 0}   \gamma_n{\ell}_j   ({t}_i^{C_{2^n}})^{2^j} x^{2^{i+j}}.\\
\end{align*}
Comparing the coefficients for $x^{2^k}$ gives the equation
\begin{eqnarray*}
 {\ell}_k &=& \sum_{i+j=k} \gamma_n{\ell}_j  ({t}_i^{C_{2^n}})^{2^j}  \\
 &=& \sum_{j=0}^k \gamma_n{\ell}_j  ({t}_{k-j}^{C_{2^n}})^{2^j} \\
 &=& \gamma_n{\ell}_k +   \sum_{j=0}^{k-1}\gamma_n{\ell}_j  ({t}_{k-j}^{C_{2^n}})^{2^j}.
\end{eqnarray*}
This proves the first equation.  Applying $\gamma_{n}$ to the first equation and using the fact that $\gamma_{n}^2 =  \gamma_{{n-1}}$ proves the second equation.  Adding the first two equations together proves the third equation.
\end{proof}

We can now relate the elements $t_k^{C_2} $ to the coefficients $v_k$ of the $2$-series of $\Fc$.
\begin{proposition}\label{prop:tkvk}
For all $k \geq 1$, in $\Rn$, we have the equality
\[t_k^{C_2} \equiv v_k \pmod{I_{k}}.\]
\end{proposition}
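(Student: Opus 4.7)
My plan is to show that $v_k - t_k^{C_2}$ lies in the $\Rn$-module $L_k$; since $v_k - t_k^{C_2}$ also lies in $\Rn$, Proposition~\ref{prop:LkcapR=Ik} will then force $v_k - t_k^{C_2} \in L_k \cap \Rn = I_k$, which is the desired congruence. I will proceed by strong induction on $k$.

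A preliminary step is to derive, from Proposition~\ref{prop:keyformula} alone, the identity $\gamma_1 \ell_j = -\ell_j$ for every $j \geq 1$. Applying $\gamma_1$ to Equation~\eqref{eq:3.5eq1} at $n=1$, using $\gamma_1^2 = \mathrm{id}$, $\gamma_1 t_j^{C_2} = -t_j^{C_2}$, and $(-1)^{2^j} = 1$ for $j \geq 1$, yields a companion identity whose sum with the original collapses to $\sum_{j=1}^{k-1}(\ell_j + \gamma_1 \ell_j)(t_{k-j}^{C_2})^{2^j} = 0$. A short induction on $k$, combined with the fact that $t_1^{C_2}$ is a non-zerodivisor in $\Rn \otimes \Q$, forces $\ell_j + \gamma_1 \ell_j = 0$.

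With this in hand, Equation~\eqref{eq:3.5eq1} at $n=1$ simplifies to $2\ell_k = t_k^{C_2} - \sum_{j=1}^{k-1}\ell_j(t_{k-j}^{C_2})^{2^j}$. Subtracting this from Equation~\eqref{eqn:lkvk} (after re-indexing the Araki sum) produces the key identity
\[ v_k - t_k^{C_2} = -2^{2^k}\ell_k - \sum_{j=1}^{k-1}\ell_j\bigl(v_{k-j}^{2^j} + (t_{k-j}^{C_2})^{2^j}\bigr). \]
The first term lies in $2\Rn \subseteq L_k$ by Proposition~\ref{prop:elldenom}. For each summand I will apply the inductive hypothesis $v_{k-j} + t_{k-j}^{C_2} \in I_{k-j}$, together with the fact that the middle binomial coefficients $\binom{2^j}{i}$ for $0 < i < 2^j$ are even, to write $v_{k-j}^{2^j} + (t_{k-j}^{C_2})^{2^j} = (v_{k-j} + t_{k-j}^{C_2})^{2^j} - 2 c_{k-j,j}$ for some $c_{k-j,j} \in \Rn$. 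Expanding $(v_{k-j} + t_{k-j}^{C_2})^{2^j}$ in the ideal generators $\{2, v_1, \ldots, v_{k-j-1}\}$ of $I_{k-j}$ and iteratively invoking Equation~\eqref{eqn:lkvk} (which gives $v_m, 2\ell_m \in L_{m+1}$) ought to place every summand in $L_k$.

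The main obstacle is that last expansion: verifying that every product $\ell_j \cdot 2^a \cdot v_{i_1}\cdots v_{i_s}$ appearing in $\ell_j(v_{k-j} + t_{k-j}^{C_2})^{2^j}$ genuinely lands in $L_k$, despite the apparent denominator $2^j$ of $\ell_j$. The needed cancellations must be tracked by repeatedly substituting $v_m = (2-2^{2^m})\ell_m - \sum_{s<m}\ell_{m-s}v_s^{2^{m-s}}$ and exploiting both the weight bound $a + s = 2^j$ and the factor of $2$ hidden in the leading coefficient of each $v_m$.
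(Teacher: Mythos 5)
Your computation up to the displayed key identity is correct and is, up to multiplying the first equation by the unit $(1-2^{2^k-1})$ rather than subtracting directly, exactly the paper's argument: both proofs combine Equation~\eqref{eq:3.5eq1} at $n=1$ with Equation~\eqref{eqn:lkvk} and then invoke $L_k\cap\Rn=I_k$ from Proposition~\ref{prop:LkcapR=Ik}. (Your preliminary derivation of $\gamma_1\ell_j=-\ell_j$ is valid but roundabout; it follows in one line from the fact that the $C_2$-action restricts to the involution, which multiplies the degree-$2(2^j-1)$ element $\ell_j$ by $(-1)^{2^j-1}=-1$.)

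However, the last third of your proposal --- the strong induction on $k$, the binomial-coefficient manipulation, and the ``main obstacle'' of tracking denominators through repeated substitution of the $2$-series relations --- is chasing a problem that does not exist, and as written it is the one step you leave unfinished. By Definition~\ref{defn:Mk}, $L_k$ is the $\Rn$-submodule of $\Rn\otimes\Q$ generated by $\{2,\ell_1,\dots,\ell_{k-1}\}$, so $r\cdot\ell_j\in L_k$ for \emph{every} $r\in\Rn$ and every $1\le j\le k-1$, with no condition on $r$ and no denominator bookkeeping whatsoever. Each summand $\ell_j\bigl(v_{k-j}^{2^j}+(t_{k-j}^{C_2})^{2^j}\bigr)$ in your key identity is precisely of this form, and $2^{2^k}\ell_k\in 2\Rn\subseteq L_k$ by Proposition~\ref{prop:elldenom}; hence the right-hand side lies in $L_k$ outright, and $v_k-t_k^{C_2}\in L_k\cap\Rn=I_k$ with no induction at all. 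The integrality subtlety you are worried about is exactly what Proposition~\ref{prop:LkcapR=Ik} is designed to absorb: one only needs the element to lie in $L_k$ \emph{and} in $\Rn$, and the latter is automatic since $v_k-t_k^{C_2}\in\Rn$. Deleting your last two paragraphs and replacing them with this observation yields a complete (and direct) proof.
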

\begin{proof}
Letting $n=1$ in Equation~\eqref{eq:3.5eq1} gives the equation
$$t_k^{C_2} = 2 \ell_k - \sum_{j = 1}^{k-1}\ell_j(t_{k-j}^{C_2})^{2^j}.$$
Multiplying this equation by the unit $(1-2^{2^k-1})$ yields the formula
\begin{equation*}
(1-2^{2^k-1})t_k^{C_2} = (2-2^{2^k}) \ell_k -(1-2^{2^k-1}) \sum_{j =1}^{k-1} \ell_j (t_{k-j}^{C_{2}})^{2^j}
\end{equation*}
From Proposition~\ref{prop:elldenom}, we also have the formula
$$v_k = (2 - 2^{2^k})\ell_k - \sum_{j =1}^{k-1} \ell_{k-j}v_j^{2^{k-j}} = (2 - 2^{2^k})\ell_k - \sum_{j =1}^{k-1} \ell_j v_{k-j}^{2^j}.$$
Subtracting these two formulas and rearranging terms gives
\begin{eqnarray*}
 t_k^{C_2} - v_k &=&-(1-2^{2^k-1})  \sum_{j =1}^{k-1} \ell_j (t_{k-j}^{C_{2}})^{2^j} + \sum_{j = 1}^{k-1} \ell_j v_{k-j}^{2^j} +2^{2^k-1} t_k^{C_2}
\end{eqnarray*}
The right hand side is in $L_{k}$. Since $ t_k^{C_2} - v_k $ is also in $\Rn$,
\[t_k^{C_2} - v_k \in L_k \cap \Rn = I_{k}\]
by Proposition~\ref{prop:danny1alt}.  It follows that $ t_k^{C_2} \equiv v_k \pmod{I_k}$.
\end{proof}

In the following results, we establish the relationship between the $\Rn$-submodules $\gamma_{r}L_{k}$ of $\Rn \otimes \Q$ and the ideals $I_k = (2,v_1, \ldots, v_{k-1})$ of $\Rn$.

\begin{proposition}\label{prop:danny1}\label{prop:reducelk-glk}
For all $n,k\geq 1$,
\[ \ell_k -\gamma_n \ell_k  \in  \Rn + L_k.\]
\end{proposition}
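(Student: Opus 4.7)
The plan is to argue by induction on $k$ using Proposition~\ref{prop:keyformula}. Equation~\eqref{eq:3.5eq1} expresses
\[\ell_k - \gamma_n \ell_k = \sum_{j=0}^{k-1} \gamma_n \ell_j \cdot (t_{k-j}^{C_{2^n}})^{2^j},\]
so the task reduces to showing each summand lies in $\mathcal{R}_n + L_k$. Since $L_k$ is an $\mathcal{R}_n$-submodule of $\mathcal{R}_n \otimes \mathbb{Q}$, so is $\mathcal{R}_n + L_k$, and multiplication by the element $(t_{k-j}^{C_{2^n}})^{2^j} \in \mathcal{R}_n$ preserves this submodule. Thus it suffices to show $\gamma_n \ell_j \in \mathcal{R}_n + L_k$ for each $0 \leq j \leq k-1$.

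For $j = 0$, we have $\ell_0 = 1$, so the term contributes $t_k^{C_{2^n}} \in \mathcal{R}_n$. For the base case $k=1$, the sum has only the $j=0$ term, giving $\ell_1 - \gamma_n \ell_1 = t_1^{C_{2^n}} \in \mathcal{R}_n \subseteq \mathcal{R}_n + L_1$.

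For the inductive step, assume the result holds for all $k' < k$. Fix $1 \leq j \leq k-1$. By the inductive hypothesis applied to $k' = j$, we have $\ell_j - \gamma_n \ell_j \in \mathcal{R}_n + L_j$. Since $\ell_j \in L_{j+1} \subseteq L_k$, we obtain
\[\gamma_n \ell_j = \ell_j - (\ell_j - \gamma_n \ell_j) \in L_k + \mathcal{R}_n + L_j \subseteq \mathcal{R}_n + L_k.\]
Multiplying by $(t_{k-j}^{C_{2^n}})^{2^j} \in \mathcal{R}_n$ keeps the term in $\mathcal{R}_n + L_k$, and summing all contributions yields $\ell_k - \gamma_n \ell_k \in \mathcal{R}_n + L_k$, completing the induction.

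There is no serious obstacle here: the statement is essentially a direct corollary of Equation~\eqref{eq:3.5eq1} once one notes that the leading ($j=0$) term is integral and that the submodule $\mathcal{R}_n + L_k$ is closed under $\mathcal{R}_n$-multiplication, with the only bookkeeping being the induction that upgrades $\gamma_n \ell_j$ into $\mathcal{R}_n + L_k$ for the non-leading terms.
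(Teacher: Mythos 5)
Your proof is correct and follows essentially the same route as the paper: both argue by strong induction on $k$ starting from Equation~\eqref{eq:3.5eq1}, and your step $\gamma_n\ell_j = \ell_j - (\ell_j - \gamma_n\ell_j)$ is exactly the paper's splitting of the sum into $\sum(\gamma_n\ell_j - \ell_j)t_{k-j}^{2^j} + \sum \ell_j t_{k-j}^{2^j}$, just organized term by term. No gaps.
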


\begin{proof}
We will fix $n$ and abbreviate notations by writing $\Rc=\Rn$, $\gamma = \gamma_n$ and $t_i = t_i^{C_{2^n}}$.  We will prove the claim by using induction on $k$.  The base case when $k =1$ is immediate because by Equation~\eqref{eq:3.5eq1},
\[\ell_1 - \gamma \ell_1 =  t_1 \in \Rc.\]
Now, suppose that we have proven that
\[ \ell_{i} -\gamma \ell_{i} \in \Rc + L_{i}\]
for all $1 \leq i \leq k-1$.  Equation~\eqref{eq:3.5eq1} shows that
\begin{eqnarray*}
\ell_k - \gamma \ell_k &=& \sum_{j= 0}^{k-1} \gamma \ell_j t_{k-j}^{2^j} \\
&=&\sum_{j = 0}^{k-1}(\gamma \ell_j - \ell_j) t_{k-j}^{2^j} + \sum_{j=0}^{k-1}\ell_j t_{k-j}^{2^j}.
\end{eqnarray*}
By our induction hypothesis, every term in the first sum is an element in $\Rc + L_k$.  The second sum is also in $\Rc + L_k$.  Therefore,
\[ \ell_k - \gamma \ell_k \in \Rc+L_k. \qedhere\]
This completes the inductive step.
\end{proof}

\begin{proposition}\label{prop:IkInvariantIdeal}\label{prop:danny2}
For all $n, k \geq 1$,
\[  v_k-\gamma_n v_k \in I_k.\]
In other words, the ideals $I_k \subseteq \Rn$ are invariant under the $C_{2^n}$-action.
\end{proposition}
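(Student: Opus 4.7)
The plan is to prove the proposition by strong induction on $k$. Since $v_k - \gamma_n v_k \in \Rn$ and Proposition~\ref{prop:LkcapR=Ik} asserts $L_k \cap \Rn = I_k$, it suffices to verify $v_k - \gamma_n v_k \in L_k$. For the base case $k=1$, Equations~\eqref{eqn:lkvk} and~\eqref{eq:3.5eq1} give $v_1 = -2\ell_1$ and $\ell_1 - \gamma_n \ell_1 = t_1^{C_{2^n}}$, whence $v_1 - \gamma_n v_1 = -2 t_1^{C_{2^n}} \in I_1$.

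For the inductive step, assume $v_j - \gamma_n v_j \in I_j$ for every $j < k$; in particular, $\gamma_n v_j \in v_j + I_j \subseteq I_k$ for $j \leq k-1$. Applying Equation~\eqref{eqn:lkvk} both to $\Fc$ and to $\Fc^{\gamma_n}$ (whose Araki generators and logarithm coefficients are $\gamma_n v_i$ and $\gamma_n \ell_i$ respectively) and subtracting, one obtains
\[
v_k - \gamma_n v_k = 2\bigl(1 - 2^{2^k-1}\bigr)(\ell_k - \gamma_n \ell_k) - \sum_{j=1}^{k-1}\Bigl[\ell_{k-j}\,v_j^{2^{k-j}} - (\gamma_n \ell_{k-j})(\gamma_n v_j)^{2^{k-j}}\Bigr].
\]
The summands $\ell_{k-j}v_j^{2^{k-j}}$ lie in $L_k$ by definition. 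For the $(\gamma_n \ell_{k-j})(\gamma_n v_j)^{2^{k-j}}$ terms, Proposition~\ref{prop:reducelk-glk} lets me write $\gamma_n \ell_{k-j} = r + m$ with $r \in \Rn$ and $m \in L_{k-j} \subseteq L_k$; the induction hypothesis gives $(\gamma_n v_j)^{2^{k-j}} \in I_k$, so both $r(\gamma_n v_j)^{2^{k-j}} \in I_k \subseteq L_k$ and $m(\gamma_n v_j)^{2^{k-j}} \in L_k$.

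The most delicate piece is $2(1-2^{2^k-1})(\ell_k - \gamma_n \ell_k)$, since $\ell_k$ itself does not lie in $L_k$; this is the main obstacle, and I plan to resolve it by using Equation~\eqref{eq:3.5eq1} to rewrite $\ell_k - \gamma_n \ell_k = t_k^{C_{2^n}} + \sum_{j=1}^{k-1}\gamma_n \ell_j (t_{k-j}^{C_{2^n}})^{2^j}$, reducing the question to the $\gamma_n \ell_j$ with $j < k$ (which have smaller denominators). The $t_k^{C_{2^n}}$ contribution lies in $2\Rn \subseteq I_k$. For each remaining summand, split $2(1-2^{2^k-1}) = 2(1-2^{2^j-1}) + 2^{2^j}(2^{2^k-2^j}-1)$ and apply Equation~\eqref{eqn:lkvk} to $\Fc^{\gamma_n}$ to rewrite
\[
2(1-2^{2^j-1})\gamma_n \ell_j = \gamma_n v_j + \sum_{i=1}^{j-1}(\gamma_n \ell_{j-i})(\gamma_n v_i)^{2^{j-i}};
\]
the induction hypothesis places $\gamma_n v_j$ and each $\gamma_n v_i$ in $I_k$, and one more $\Rn + L$-decomposition of the $\gamma_n \ell_{j-i}$ factors absorbs every piece into $L_k$. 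The leftover $2^{2^j}(2^{2^k-2^j}-1)\gamma_n \ell_j(t_{k-j}^{C_{2^n}})^{2^j}$ lies in $2\Rn \subseteq I_k$ since $2^{2^j}\gamma_n\ell_j \in \Rn$ for $j \geq 1$. The bookkeeping obstacle is coordinating the denominators of the various $\gamma_n \ell$'s with the ideal structure — precisely what forces the combined use of Proposition~\ref{prop:reducelk-glk} and Equation~\eqref{eq:3.5eq1} — but once every summand is placed in $L_k$, the intersection $\Rn \cap L_k = I_k$ closes the induction.
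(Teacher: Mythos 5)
Your argument is correct and follows the paper's proof in all essentials: the same induction on $k$, the same reduction to showing $v_k-\gamma_n v_k\in L_k$ and concluding via $L_k\cap\Rn=I_k$ (Proposition~\ref{prop:LkcapR=Ik}), and the same subtraction of the two instances of Equation~\eqref{eqn:lkvk} for $\Fc$ and $\Fc^{\gamma_n}$. The only divergence is your treatment of the ``delicate'' term $(2-2^{2^k})(\ell_k-\gamma_n\ell_k)$, which you re-expand through Equations~\eqref{eq:3.5eq1} and~\eqref{eqn:lkvk}; Proposition~\ref{prop:reducelk-glk}, which you already invoke for the other summands, disposes of it in one line by placing $\ell_k-\gamma_n\ell_k$ in $\Rn+L_k$ and hence the whole term in $2\Rn+L_k\subseteq L_k$ --- your longer route is nonetheless valid, apart from two harmless slips (the decomposition of $\gamma_n\ell_{k-j}$ lands in $\Rn+L_{k-j+1}$ rather than $\Rn+L_{k-j}$, and the coefficient splitting should read $2^{2^j}\bigl(1-2^{2^k-2^j}\bigr)$).
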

\begin{proof}
We will again fix a specific $n$ and write $\Rc=\Rn$, $\gamma = \gamma_n$ and $t_i = t_i^{C_{2^n}}$. First note that by Proposition~\ref{prop:elldenom} and Equation~\ref{eq:3.5eq1},
\[ v_1 - \gamma v_1 = -2(\ell_1 - \gamma \ell_1) =-2t_1  \in I_1.\]
So the claim holds when $k=1$.

Assume that the claim holds for $v_1, \ldots,v_{k-1}$. We will show that $v_k - \gamma v_k \in L_k$. The fact that $v_k - \gamma v_k \in \Rc$ will imply that it is actually in $L_k \cap \Rc$, which is equal to $I_k$ by Proposition~\ref{prop:danny1alt}.

We use Propositions~\ref{prop:elldenom} to make the following computation:
\begin{align*}
v_k - \gamma v_k& = \left((2-2^{2^k}) \ell_k - \sum_{j =1}^{k-1} \ell_j v_{k-j}^{2^j}\right) -\left( (2 -2^{k}) \gamma \ell_k - \sum_{j =1}^{k-1} \gamma \ell_j \gamma v_{k-j}^{2^j}\right) \\
&\equiv (2-2^{2^k})(\ell_k - \gamma \ell_k) - \sum_{j =1}^{k-1} \gamma \ell_j \gamma v_{k-j}^{2^j}  \pmod{L_k} \\
&\equiv (2-2^{2^k})(\ell_k - \gamma \ell_k) + \sum_{j =1}^{k-1} ( \ell_j-\gamma \ell_j) \gamma v_{k-j}^{2^j} \pmod{L_k}.
\end{align*}
By Proposition~\ref{prop:reducelk-glk},  for all $1 \leq j \leq k$, $\ell_j - \gamma \ell_j = r_j $ modulo $L_j \subseteq L_k$ for some $r_j \in \Rc$.  Using the inductive hypothesis that
\[\gamma v_{i} \equiv v_i  \pmod {{I_i}\subseteq L_k},\]
the expression above can be further reduced modulo $L_k$ to show that
\begin{eqnarray*}
v_k - \gamma v_k \equiv (2-2^{2^k})r_k + \sum_{j =1}^{k-1} r_j v_{k-j}^{2^j}  \pmod{L_k}.
\end{eqnarray*}
Since the right-hand side is in $ I_k \subseteq {L_k}$, this shows that $v_k - \gamma v_k  \in L_k$.  It follows that $v_k - \gamma v_k \in L_k \cap \Rc = I_k$.
\end{proof}

\begin{cor}\label{cor:gammaLkisIk}\label{cor:MkcapRn}
For every $n, k \geq 1$ and $1 \leq r \leq n$,
\[{\gamma_r} L_k\cap \Rn= I_k.\]
\end{cor}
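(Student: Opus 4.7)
The plan is to deduce this as a direct corollary of Proposition~\ref{prop:danny1alt} (which gives the case $r=n$, $\gamma_r = \mathrm{id}$, namely $L_k \cap \Rn = I_k$) together with the $C_{2^n}$-invariance of $I_k$ established in Proposition~\ref{prop:IkInvariantIdeal}. The idea is simply to transport the equality $L_k \cap \Rn = I_k$ across the ring automorphism $f_{\gamma_r} \colon \Rn \otimes \Q \to \Rn \otimes \Q$.

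First I would observe that $\gamma_r = \gamma_n^{2^{n-r}}$ is a power of $\gamma_n$, so that Proposition~\ref{prop:IkInvariantIdeal} yields $f_{\gamma_r}(I_k) \subseteq I_k$, and hence $f_{\gamma_r}(I_k) = I_k$ since $f_{\gamma_r}$ is an automorphism. Next, I would unpack the definition of $\gamma_r L_k$: an element of $\gamma_r L_k$ is, by Definition~\ref{defn:Mk}, a finite $\Rn$-linear combination $r_0 \cdot 2 + \sum_{i=1}^{k-1} r_i \cdot (\gamma_r \ell_i)$. Using the identity $r_i \cdot (\gamma_r \ell_i) = f_{\gamma_r}\!\bigl(f_{\gamma_r}^{-1}(r_i)\cdot \ell_i\bigr)$ (and similarly for the $2$-term), and the fact that $f_{\gamma_r}^{-1}$ restricts to an automorphism of $\Rn$, I would conclude that
\[
\gamma_r L_k \;=\; f_{\gamma_r}(L_k).
\]

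With that established, the corollary is immediate: since $f_{\gamma_r}$ also preserves $\Rn$ (as a ring automorphism of $\Rn$), it commutes with intersection in the sense that
\[
\gamma_r L_k \cap \Rn \;=\; f_{\gamma_r}(L_k) \cap f_{\gamma_r}(\Rn) \;=\; f_{\gamma_r}\bigl(L_k \cap \Rn\bigr) \;=\; f_{\gamma_r}(I_k) \;=\; I_k,
\]
where the third equality uses Proposition~\ref{prop:danny1alt} and the fourth uses the invariance of $I_k$ noted above.

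There is essentially no obstacle here, as all the hard work has been done in Propositions~\ref{prop:danny1alt} and \ref{prop:IkInvariantIdeal}; the only point requiring a moment's care is verifying that $f_{\gamma_r}(L_k) = \gamma_r L_k$, which is a direct consequence of the fact that $f_{\gamma_r}$ is a ring automorphism acting as the identity on the integer $2$ and sending $\ell_i$ to $\gamma_r \ell_i$ by definition.
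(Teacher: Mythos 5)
Your proof is correct and is essentially the paper's argument: the paper's one-line proof $\gamma_r L_k \cap \Rn = \gamma_r I_k = I_k = L_k \cap \Rn$ is exactly your transport of Proposition~\ref{prop:danny1alt} across the automorphism $f_{\gamma_r}$, combined with the $C_{2^n}$-invariance of $I_k$ from Proposition~\ref{prop:IkInvariantIdeal}. You merely make explicit the (easy) verification that $f_{\gamma_r}(L_k)=\gamma_r L_k$, which the paper leaves implicit.
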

\begin{proof}
Proposition~\ref{prop:LkcapR=Ik} and Proposition~\ref{prop:IkInvariantIdeal} imply that
\[{\gamma_r}L_k \cap \Rn = \gamma_r I_k = I_k = L_k \cap \Rn. \qedhere\]
\end{proof}

\begin{proposition}\label{prop:Mform}
For every $n\geq 2$,
\[ t_k^{C_{2^{n-1}}} \equiv \sum_{j=0}^{k}\gamma_{n}{t}_{j}^{C_{2^n}} ({t}_{k-j}^{C_{2^n}})^{2^j} \pmod{\gamma_{n-1}L_{k}} \]
\end{proposition}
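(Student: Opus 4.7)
The plan is to extract two expressions for $\ell_k - \gamma_{n-1}\ell_k$ from Proposition~\ref{prop:keyformula}, set them equal, and reduce both sides modulo $\gamma_{n-1}L_k$. To streamline notation, I would set $t_0^{C_{2^r}} := 1$ and write $t_i := t_i^{C_{2^n}}$ throughout the argument, noting that $\ell_0 = 1$.

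First I would apply Equation~\eqref{eq:3.5eq1} at level $n-1$ (using $\gamma_{n-1} = \gamma_n^2$) to obtain
\[ \ell_k - \gamma_{n-1}\ell_k = \sum_{j=0}^{k-1} \gamma_{n-1}\ell_j \bigl(t_{k-j}^{C_{2^{n-1}}}\bigr)^{2^j}, \]
and compare with Equation~\eqref{eq:3.5eq3}:
\[ \ell_k - \gamma_{n-1}\ell_k = \sum_{j=0}^{k-1} \gamma_n \ell_j\, (t_{k-j})^{2^j} + \sum_{j=0}^{k-1} \gamma_{n-1}\ell_j\, (\gamma_n t_{k-j})^{2^j}. \]
Setting these equal is the starting equation. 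Now I would reduce modulo $\gamma_{n-1}L_k$. Since $\gamma_{n-1}\ell_j \in \gamma_{n-1}L_k$ for $1 \leq j \leq k-1$ and $\gamma_{n-1}\ell_0 = 1$, the left-hand side collapses to $t_k^{C_{2^{n-1}}}$, while the second sum on the right collapses to $\gamma_n t_k$.

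The first sum on the right, $\sum_{j=0}^{k-1} \gamma_n \ell_j (t_{k-j})^{2^j}$, is the main technical step: I need to replace each $\gamma_n\ell_j$ by something expressible modulo $\gamma_{n-1}L_k$. Applying $\gamma_n$ to Equation~\eqref{eq:3.5eq1} (equivalently, invoking Equation~\eqref{eq:3.5eq2} directly), I get
\[ \gamma_n\ell_j - \gamma_{n-1}\ell_j = \sum_{i=0}^{j-1} \gamma_{n-1}\ell_i\,(\gamma_n t_{j-i})^{2^i}. \]
For $1 \leq j \leq k-1$, the term $\gamma_{n-1}\ell_j$ lies in $\gamma_{n-1}L_k$, and in the sum on the right the terms with $i \geq 1$ also lie in $\gamma_{n-1}L_k$. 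Only the $i=0$ term $\gamma_n t_j$ survives, so
\[ \gamma_n \ell_j \equiv \gamma_n t_j \pmod{\gamma_{n-1}L_k}, \qquad 1 \leq j \leq k-1. \]
Substituting back (and using the $j=0$ term $\gamma_n\ell_0 \cdot t_k = t_k$), the first sum reduces to $t_k + \sum_{j=1}^{k-1}\gamma_n t_j\,(t_{k-j})^{2^j}$. Adding the contribution $\gamma_n t_k$ from the second sum and recognizing the $j=0$ and $j=k$ endpoints yields
\[ t_k^{C_{2^{n-1}}} \equiv \sum_{j=0}^{k} \gamma_n t_j\,(t_{k-j})^{2^j} \pmod{\gamma_{n-1}L_k}, \]
which is the claim.

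The only step that requires careful bookkeeping is reducing $\gamma_n \ell_j$ modulo $\gamma_{n-1}L_k$; everything else is a matter of collecting endpoint terms at $j=0$ and $j=k$. The denominators of the $\ell_j$'s never cause trouble because the reductions keep all $\gamma_{n-1}\ell_i$ with $i \geq 1$ inside $\gamma_{n-1}L_k$ by definition.
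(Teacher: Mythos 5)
Your proposal is correct and follows essentially the same route as the paper: both start from Equation~\eqref{eq:3.5eq1} at level $n-1$ to identify $\ell_k-\gamma_{n-1}\ell_k$ with $t_k^{C_{2^{n-1}}}$ modulo $\gamma_{n-1}L_k$, compare with Equation~\eqref{eq:3.5eq3}, and use Equation~\eqref{eq:3.5eq2} to reduce $\gamma_n\ell_j$ to $\gamma_n t_j^{C_{2^n}}$ modulo $\gamma_{n-1}L_k$ before collecting the $j=0$ and $j=k$ endpoint terms. The only cosmetic difference is that you reduce $\gamma_n\ell_j$ first and then substitute, whereas the paper substitutes $\gamma_n\ell_j-\gamma_{n-1}\ell_j$ into the sum and then reduces; the underlying computation is identical.
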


\begin{proof}
Equation~\eqref{eq:3.5eq1}, with $n$ replaced by $n-1$, states that
\begin{align*}
{\ell}_k - \gamma_{n-1}{\ell}_k  &=    \sum_{j=0}^{k-1}\gamma_{n-1}{\ell}_j  ({t}_{k-j}^{C_{2^{n-1}}})^{2^j}  \\
&\equiv {t}_{k}^{C_{2^{n-1}}}\pmod{\gamma_{n-1}L_{k}}.
\end{align*}

In the following steps, we use the relations in Proposition~\ref{prop:keyformula}  repeatedly:
\begin{align*}
t_k^{C_{2^{n-1}}}  & \equiv {\ell}_k- \gamma_{n-1} {\ell}_k \pmod{\gamma_{n-1}L_{k}} \\
&\equiv \sum_{j=0}^{k-1}\left(\gamma_{n}{\ell}_j  ({t}_{k-j}^{C_{2^n}})^{2^j} + \gamma_{n-1}{\ell}_j  (\gamma_n{t}_{k-j}^{C_{2^n}})^{2^j}\right)\pmod{ \gamma_{n-1}L_{k}} \text{ (by Equation~\eqref{eq:3.5eq3})}\\
&\equiv {t}_{k}^{C_{2^n}} + \gamma_{n}t_k^{C_{2^n}}+  \sum_{j=1}^{k-1}\left(\gamma_n{\ell}_j  ({t}_{k-j}^{C_{2^n}})^{2^j} + \gamma_{n-1}{\ell}_j  (\gamma_n {t}_{k-j}^{C_{2^n}})^{2^j}\right)\pmod{ \gamma_{n-1}L_{k}} \\
&\equiv {t}_{k}^{C_{2^n}} + \gamma_{n}t_k^{C_{2^n}}+  \sum_{j=1}^{k-1}\gamma_n{\ell}_j  ({t}_{k-j}^{C_{2^n}})^{2^j}  \pmod{\gamma_{n-1}L_{k}} \\
&\equiv {t}_{k}^{C_{2^n}} + \gamma_{n}t_k^{C_{2^n}}+  \sum_{j=1}^{k-1}(\gamma_n{\ell}_j  -  \gamma_{n-1}{\ell}_j )({t}_{k-j}^{C_{2^n}})^{2^j}  \pmod{\gamma_{n-1}L_{k}} \\
&\equiv {t}_{k}^{C_{2^n}} + \gamma_{n}t_k^{C_{2^n}}+  \sum_{j=1}^{k-1}\left( \sum_{r=0}^{j-1}\gamma_{n-1}{\ell}_r  (\gamma_n{t}_{j-r}^{C_{2^n}})^{2^r} \right)({t}_{k-j}^{C_{2^n}})^{2^j}  \pmod{\gamma_{n-1}L_{k}} \\
&\hspace{0.2in} \text{(by Equation~\eqref{eq:3.5eq2})}\\
&\equiv {t}_{k}^{C_{2^n}} + \gamma_{n}t_k^{C_{2^n}}+  \sum_{j=1}^{k-1}\gamma_n{t}_{j}^{C_{2^n}} ({t}_{k-j}^{C_{2^n}})^{2^j}  \pmod{\gamma_{n-1}L_{k}}.
\end{align*}
\end{proof}

\begin{proof}[Proof of Theorem~\ref{theorem:Mform}.]
By Proposition~\ref{prop:Mform},
\[ t_k^{C_{2^{n-1}}} - \left({t}_{k}^{C_{2^n}} + \gamma_{n}t_k^{C_{2^n}}+  \sum_{j=1}^{k-1}\gamma_n{t}_{j}^{C_{2^n}} ({t}_{k-j}^{C_{2^n}})^{2^j}\right) \in {\gamma_{n-1}}L_k.\]
However, all terms are in $\Rn$.  Therefore, this difference is in $\gamma_{n-1}L_k \cap \Rn = I_k$ by Corollary~\ref{cor:MkcapRn}.  The result follows.
\end{proof}


\section{Deformations with group actions}\label{sec:proofs-alg}

In this section,
we construct a formal group law $\Gamma_h$ of height $h=2^{n-1}m$, together with a universal deformation $F_h$ of $\Gamma_h$. The formal group law $\Gamma_h$ comes with an obvious action of
\[\Gkm = C_{2^n} \times (\Gal \ltimes k^{\times}[q])\]
and we study its universal deformation together with its action of $\Gkm$.

\subsection{The formal group law $\Gamma_h$}

Let $h=2^{n-1}m$.  Consider the functor from the category of finite fields of characteristic $2$ to the category of complete local rings which takes $k$ to the ring
\begin{align*}
\Rkm =W(k) [C_{2^n} \cdot t_1^{C_{2^n}}, \ldots, C_{2^n} \cdot t_{m-1}^{C_{2^n}}, C_{2^{n}} \cdot u ] [C_{2^{n}}\cdot u^{-1} ]^{\wedge}_\mathfrak{m},
\end{align*}
where
\[ \mathfrak{m}=(C_{2^n}\cdot t_1^{C_{2^n}}, \ldots, C_{2^n}\cdot t_{m-1}^{C_{2^n}}, C_{2^n}\cdot (u-\gamma_n u)).\]
Here, $|t_i^{C_{2^n}}| = 2(2^i-1)$ for $1 \leq i \leq m-1$ and $|u| =2$. Note further that, for a graded ring $A$ with graded ideal $I$, by $A^{\wedge}_{I}$ we mean the graded ring whose $s$th homogenous component is $(A^{\wedge}_I)_s = \varprojlim_i A_s/I_0^iA_s$ for $I_0 = A\cap I$.

There is an action of the group $\Gkm$ on the ring $\Rkm$.  To describe this action, note that there is an action of $C_{2^n}$ on $\Rkm$ by
$W(k)$-linear maps, determined by
\[\gamma_n(\gamma_n^r x) = \left\{ \begin{array}{ll} \gamma_n^{r+1} x & r < 2^{n-1}-1 \\
-x & r = 2^{n-1}-1
\end{array}\right. \]
for $x=t_i^{C_{2^n}}$, $1\leq i\leq m-1$, and $x=u$.  The Galois group $\Gal=\Gal(k/\F_2)$ also acts on $\Rkm$ via its action on the coefficients $W(k)$.  Lastly, the group $k^{\times}[q]$ for $q=2^m-1$ acts on $\Rkm$ by
 \begin{align*}
f_{\zeta}(u)&=\zeta^{-1} u, \\
f_{\zeta}(t_i^{C_{2^n}}) &= t_i^{C_{2^n}}.
\end{align*}
for every $\zeta \in k^{\times}[q]$ and $1 \leq i \leq m-1$.  All together, these three actions combine to give an action of the group $\Gkm$ on $\Rkm$. The ring $\Rkm$ and this action of $\Gkm$ were already discussed in Section~\ref{sec:mainresults}.

\begin{remark}\label{rem:2isin}
Note that $2\in \mathfrak{m}$ as
\begin{eqnarray*}
\gamma_n^{2^{n-1}-1}(u-\gamma_n u) &=&\gamma_n^{2^{n-1}-1}u +u \\
&=& \sum_{r=0}^{2^{n-1}-2}\gamma_n^r(u-\gamma_n u) + 2\gamma_n^{2^{n-1}-1}u
\end{eqnarray*}
and $\gamma_n^{2^{n-1}-1}u $ is a unit.
\end{remark}

From Remark~\ref{rem:2isin}, it is clear that $\Rkm$ is a complete local ring with maximal ideal $\mathfrak m$.  The action of $\Gkm$ is continuous in the topology on $\Rkm$ defined by the maximal ideal $\mathfrak{m}$.

Let $\Rnper(k)\langle m\rangle$ be as in \eqref{eq:Rnkm}.
There is a $\Gkm$-equivariant ring homomorphism
\begin{equation}\label{eq:mapf}
f\colon \Rn \to \Rkm ,
\end{equation}
determined by sending
\begin{align*}
t_i^{C_{2^n}} &\longmapsto \begin{cases}t_i^{C_{2^n}} & 1\leq i \leq m-1, \\
 u^{2^m-1} & i=m, \\
0 & i>m.
\end{cases}
\end{align*}
Note in particular that the map $f$ factors through  $\Rnper(k)\langle m\rangle$.  Let $F_h = f^*\Fc$, where $\Fc$, as before, is the image of the universal formal group law under the inclusion $\Rone \to \Rn$. Let
\[p \colon \Rkm \to \Rkm/\mathfrak{m} =:K\]
be the projection,  where $K=k[\bar u^{\pm1}]$ and $\bar u=p(u)$.
Define
\begin{align}\label{eq:fglgammah}\Gamma_h := p^*F_h.\end{align}
By Proposition~\ref{cor:finalact}, $(\Rkm, u, F_h)$ and $(K, \bar{u}, \Gamma_h)$ are formal group laws with $\Gkm$-actions that extend the $C_2$-conjugation action.

\subsection{Universal deformation of $\Gamma_h$}
For $i \geq 1$ and $1 \leq r \leq n$, we will also denote $v_i$ and $t_i^{C_{2^r}}$ for their images in $\Rkm$ under the map $f$.  Let $I_h \subset \Rkm$ be the ideal
\[I_h = (2, v_1, \ldots, v_{h-1}).\]
For $1 \leq r \leq n$, let $I_{C_{2^r}}$ denote the ideal
$$I_{C_{2^r}} = (2, C_{2^r} \cdot t_1^{C_{2^r}}, \ldots, C_{2^r} \cdot t_{2^{n-r}m-1}^{C_{2^r}}, C_{2^r} \cdot (t_{2^{n-r}m}^{C_{2^r}} - \gamma_r t_{2^{n-r}m}^{C_{2^r}})).$$
More explicitly, we have
\begin{eqnarray*}
I_{C_2} &=& (2, t_1^{C_2}, \ldots, t_{2^{n-1}m-1}^{C_2}, 2 t_{2^{n-1}m}^{C_2})  \\
I_{C_4} &=& (2, C_4 \cdot t_1^{C_4}, \ldots, C_4 \cdot t_{2^{n-2}m-1}^{C_4}, C_4 \cdot (t_{2^{n-2}m}^{C_4} - \gamma_2t_{2^{n-2}m}^{C_4})) \\
&\vdots& \\
I_{C_{2^{n-1}}} &=& (2, C_{2^{n-1}} \cdot t_1^{C_{2^{n-1}}}, \ldots, C_{2^{n-1}} \cdot t_{2m-1}^{C_{2^{n-1}}}, C_{2^{n-1}} \cdot (t_{2m}^{C_{2^{n-1}}} - \gamma_{n-1} t_{2m}^{C_{2^{n-1}}})) \\
I_{C_{2^{n}}} &=& (2, C_{2^{n}} \cdot t_1^{C_{2^{n}}}, \ldots, C_{2^{n}} \cdot t_{m-1}^{C_{2^{n}}}, C_{2^{n}} \cdot (t_{m}^{C_{2^{n}}} - \gamma_{n} t_{m}^{C_{2^{n}}}))
\end{eqnarray*}

By Proposition~\ref{prop:tkvk}, we have the equality
$$I_{C_2} = I_h.$$
In the next results, we prove that $\mathfrak{m} = I_{C_{2^n}} = I_{C_2}$. It is clear from Theorem~\ref{theorem:Mform} that we have the following chain of inclusions:
$$I_{C_2} \subset I_{C_4} \subset \cdots \subset I_{C_{2^n}}.$$

\begin{proposition}\label{prop:mandC2n}
The ideals $\mathfrak{m}$ and $I_{C_{2^n}}$  are equal in $\Rkm$.
\end{proposition}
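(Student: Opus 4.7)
The plan is to prove the equality by showing both inclusions, leveraging one polynomial identity throughout. Since $t_m^{C_{2^n}}$ is sent to $u^q$ under the map $f$ of \eqref{eq:mapf}, where $q = 2^m - 1$, applying the standard factorization
\[
a^q - b^q = (a - b)\sum_{i=0}^{q-1} a^{q-1-i}b^i
\]
with $a = \gamma_n^j u$ and $b = \gamma_n^{j+1} u$ (and the convention $\gamma_n^{2^{n-1}} u = -u$ when $j = 2^{n-1}-1$) yields
\[
\gamma_n^j\bigl(t_m^{C_{2^n}} - \gamma_n t_m^{C_{2^n}}\bigr) \;=\; \gamma_n^j(u - \gamma_n u)\cdot Q_j,
\qquad Q_j := \sum_{i=0}^{q-1}(\gamma_n^j u)^{q-1-i}(\gamma_n^{j+1}u)^i,
\]
for $j = 0, 1, \ldots, 2^{n-1}-1$. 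This identity does the work in both directions. The inclusion $I_{C_{2^n}} \subseteq \mathfrak{m}$ is immediate: the generators $2$ and $\gamma_n^j t_i^{C_{2^n}}$ of $I_{C_{2^n}}$ with $i \leq m-1$ lie in $\mathfrak{m}$ by Remark~\ref{rem:2isin} and by definition, and the displayed identity exhibits each remaining generator $\gamma_n^j(t_m^{C_{2^n}} - \gamma_n t_m^{C_{2^n}})$ as a multiple of $\gamma_n^j(u-\gamma_n u) \in \mathfrak{m}$.

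For the reverse inclusion $\mathfrak{m} \subseteq I_{C_{2^n}}$, it suffices to show $\gamma_n^j(u - \gamma_n u) \in I_{C_{2^n}}$ for each $j$, which by the same identity reduces to proving that each cofactor $Q_j$ is a unit in $\Rkm$. Since $\Rkm$ is a graded $\mathfrak{m}$-adically complete local ring with residue ring $\Rkm/\mathfrak{m} = k[\bar u^{\pm 1}]$, a homogeneous element is a unit if and only if its image in the residue ring is a unit; the non-trivial direction is the usual geometric-series argument that $(1+z)^{-1} = 1 - z + z^2 - \cdots$ converges for $z \in \mathfrak{m}$, combined with the fact that invertible elements exist in every even degree via the $C_{2^n}\cdot u^{\pm 1}$ generators, so a homogeneous inverse of the required degree can be built.

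The concluding computation is that $Q_j$ reduces modulo $\mathfrak{m}$ to a unit. Because $u - \gamma_n u \in \mathfrak{m}$, all $\gamma_n^j u$ with $0 \leq j \leq 2^{n-1}-1$ share a common image $\bar u$ in $k[\bar u^{\pm 1}]$; and because $2 \in \mathfrak{m}$, the boundary case $\gamma_n^{2^{n-1}}u = -u$ also reduces to $\bar u$. Hence every factor of the form $(\gamma_n^\bullet u)$ appearing in $Q_j$ reduces to $\bar u$, and
\[
Q_j \;\equiv\; \sum_{i=0}^{q-1} \bar u^{q-1} \;=\; q\,\bar u^{q-1} \pmod{\mathfrak{m}},
\]
which is a unit in $k[\bar u^{\pm 1}]$ since $q = 2^m-1$ is odd (so a unit in $W(k)$, and $\equiv 1$ in $k$) and $\bar u$ is invertible. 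Lifting, $Q_j \in \Rkm^\times$, so $\gamma_n^j(u-\gamma_n u) = Q_j^{-1}\cdot \gamma_n^j(t_m^{C_{2^n}} - \gamma_n t_m^{C_{2^n}}) \in I_{C_{2^n}}$, finishing the proof. The only point requiring care is the ``unit-in-residue-implies-unit'' principle in the graded complete local setting; this is routine but worth spelling out since $k[\bar u^{\pm 1}]$ is not a field and one needs to match graded degrees when lifting the inverse.
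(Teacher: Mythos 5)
Your proof is correct and follows essentially the same route as the paper: factor $t_m^{C_{2^n}} - \gamma_n t_m^{C_{2^n}} = (u-\gamma_n u)\cdot(\text{cofactor})$ via $a^q-b^q=(a-b)\sum a^{q-1-i}b^i$ and observe (together with $2\in\mathfrak m$ from Remark~\ref{rem:2isin}) that the cofactor is a unit. The paper simply asserts the cofactor is a unit, whereas you verify it by reduction modulo $\mathfrak m$ to $q\,\bar u^{q-1}$ with $q$ odd; this is a welcome elaboration, not a different argument.
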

\begin{proof}
Since $t_m^{C_{2^n}} = u^{2^m-1}$, we have the equality
\begin{align*}
t_m^{C_{2^n}} - \gamma_n t_m^{C_{2^n}} =& u^{2^m-1} - (\gamma_n u)^{2^m-1} \\
=& (u - \gamma_n u) \cdot \sum_{i=0}^{2^m-2} u^{i} (\gamma_n u)^{2^m-2-i} \\
=& (u- \gamma_n u) \cdot \text{unit}.
\end{align*}
Since $2\in \mathfrak{m}$ by Remark~\ref{rem:2isin}, this proves the claim.
\end{proof}

\begin{proposition}\label{prop:invertHHRelementspreprop}
For $1 \leq i \leq n$, the images of the elements $C_{2^i} \cdot {t}_{2^{n-i}m}^{C_{2^i}} \in \Rc_i \subset \Rn$ are invertible in $\Rkm$ and the images of the elements $C_{2^i} \cdot {t}_{k}^{C_{2^i}}$ for $2^{n-i}m<k\leq h=2^{n-1}m$ are zero modulo $I_{h}$.
\end{proposition}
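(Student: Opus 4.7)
My plan is to proceed by downward induction on $i$, starting from the base case $i = n$ and descending to $i = 1$. The base case is immediate from the definition of $f$ in \eqref{eq:mapf}: we have $f(t_m^{C_{2^n}}) = u^{2^m-1}$, a unit in $\Rkm$, and $f(t_k^{C_{2^n}}) = 0$ for $k > m$; all conjugates behave the same way by $\Gkm$-equivariance of $f$.

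For the inductive step from $i+1$ to $i$, I would apply Theorem~\ref{theorem:Mform} with $n$ replaced by $i+1$ to obtain, for each $k$ with $2^{n-i}m \leq k \leq h$,
\begin{equation*}
t_k^{C_{2^{i}}} \equiv t_k^{C_{2^{i+1}}} + \gamma_{i+1} t_k^{C_{2^{i+1}}} + \sum_{j=1}^{k-1}\gamma_{i+1}{t}_{j}^{C_{2^{i+1}}} ({t}_{k-j}^{C_{2^{i+1}}})^{2^j} \pmod{I_k}.
\end{equation*}
Since $k \leq h$, we have $I_k \subseteq I_h$, so this congruence holds modulo $I_h$ as well. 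The heart of the argument is a range check on the right hand side: for a term in the sum to potentially not vanish modulo $I_h$ by the inductive hypothesis, both $j \leq 2^{n-i-1}m$ and $k - j \leq 2^{n-i-1}m$ must hold, which forces $k \leq 2 \cdot 2^{n-i-1}m = 2^{n-i}m$.

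When $k > 2^{n-i}m$, every summand vanishes modulo $I_h$, and the first two terms vanish directly by induction (as $k > 2^{n-i-1}m$), giving $t_k^{C_{2^i}} \equiv 0 \pmod{I_h}$. When $k = 2^{n-i}m$, exactly one summand survives modulo $I_h$, namely the one with $j = k - j = 2^{n-i-1}m$, and it equals $\gamma_{i+1} t_{2^{n-i-1}m}^{C_{2^{i+1}}} \cdot (t_{2^{n-i-1}m}^{C_{2^{i+1}}})^{2^{2^{n-i-1}m}}$, a product of two units by the inductive hypothesis. Since $I_h = I_{C_2} \subseteq I_{C_{2^n}} = \mathfrak{m}$ by Proposition~\ref{prop:tkvk} and Proposition~\ref{prop:mandC2n}, an element of $\Rkm$ that is a unit modulo $I_h$ is also a unit modulo $\mathfrak{m}$, and hence a unit in the complete local ring $\Rkm$; this yields the desired invertibility of $t_{2^{n-i}m}^{C_{2^i}}$.

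To handle all conjugates simultaneously, I would invoke that $I_h$ is $C_{2^n}$-invariant (by Proposition~\ref{prop:IkInvariantIdeal} applied in $\Rn$, descending to $\Rkm$ via the $\Gkm$-equivariance of $f$), so vanishing modulo $I_h$ is preserved under the $C_{2^i}$-action, while ring automorphisms carry units to units. The only real obstacle is bookkeeping of the range argument, but the doubling relation $2 \cdot 2^{n-i-1}m = 2^{n-i}m$ makes the induction step work out cleanly at every stage from $i = n$ down to $i = 1$.
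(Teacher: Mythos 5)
Your proof is correct and follows essentially the same route as the paper's: downward induction on $i$, applying Theorem~\ref{theorem:Mform} at each step, with the range check $j,\,k-j\le 2^{n-i-1}m \Rightarrow k\le 2^{n-i}m$ isolating the single surviving unit term at $k=2^{n-i}m$ and killing everything for $k>2^{n-i}m$, then passing from ``unit mod $I_h\subseteq\mathfrak{m}$'' to ``unit in the complete local ring.'' The only differences from the paper are cosmetic (indexing the step as $i+1\to i$ rather than $i\to i-1$, and making the invariance of $I_h$ under conjugation explicit).
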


\begin{proof}
We will use downward induction on $i$.  The base case, when $i = n$, is immediate because the elements $C_{2^n} \cdot {t}_m^{C_{2^n}}$ are invertible in $\Rkm$ and the elements $C_{2^n} \cdot {t}_k^{C_{2^n}}$ are identically zero for $k>m$.

Now, suppose we have proven the claim for $i$, where $1 < i \leq n$.  More specifically, suppose we have proven that the images of the elements $C_{2^i} \cdot {t}_{2^{n-i}m}^{C_{2^i}}$ are invertible in $\Rkm$ and the images of the elements $C_{2^i} \cdot {t}_{k}^{C_{2^i}}$ are zero modulo $I_h$ for ${2^{n-i}m< k\leq h}$.
Theorem~\ref{theorem:Mform} and the fact that $2^{n-(i-1)}m\leq h$ implies that
\begin{eqnarray*}
{t}_{2^{n-(i-1)}m}^{C_{2^{i-1}}} &\equiv& {t}_{2^{n-(i-1)}m}^{C_{2^i}} + \gamma_i {t}_{2^{n-(i-1)}m}^{C_{2^i}} + \sum_{j = 1}^{2^{n-(i-1)}m -1} \gamma_i {t}_j^{C_{2^i}} ({t}_{2^{n-(i-1)}m - j}^{C_{2^i}})^{2^j}  \pmod{I_h} \\
&\equiv& \gamma_i {t}_{2^{n-i}m}^{C_{2^i}} \cdot ({t}_{2^{n-i}m}^{C_{2^i}})^{2^{2^{n-i}m}}  \pmod{I_h}.
\end{eqnarray*}
This is because every other term in the first line of the equation has a factor in the set $C_{2^i} \cdot t_k^{C_{2^i}}$, $2^{n-i}m<k\leq h$, which is zero modulo $I_h$ by the induction hypothesis.

Since $\gamma_i {t}_{2^{n-i}m}^{C_{2^i}} \cdot ({t}_{2^{n-i}m}^{C_{2^i}})^{2^{2^{n-i}m}}$ is invertible in $\Rkm$ by the induction hypothesis and
\[ I_h = I_{C_2} \subseteq I_{C_{2^n}} = \mathfrak{m},\]
the element ${t}_{2^{n-(i-1)}m}^{C_{2^{i-1}}}$ is invertible in $\Rkm$.

Now, for all $k$ such that $2^{n-(i-1)}m<k\leq h$, we have
\begin{eqnarray*}
t_k^{C_{2^{i-1}}}
&\equiv& t_k^{C_{2^i}} + \gamma_i t_k^{C_{2^i}} + \sum_{j=1}^{k-1}\gamma_{i}{t}_{j}^{C_{2^i}} ({t}_{k-j}^{C_{2^i}})^{2^j} \pmod{I_h}.
\end{eqnarray*}
Again, using the fact that the elements $C_{2^i} \cdot {t}_{k}^{C_{2^i}}$ are zero modulo $I_h$ for ${2^{n-i}m< k\leq h}$, every term in this sum vanishes modulo $I_h$.  This completes the induction step.
\end{proof}

By letting $i = 1$ in Proposition~\ref{prop:invertHHRelementspreprop} and using Proposition~\ref{prop:tkvk}, we obtain the following corollary.

\begin{cor}\label{cor:vhInvertible}
The element $v_h$ is invertible in $\Rkm$.
\end{cor}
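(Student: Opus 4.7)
The proof plan is short because essentially all the ingredients have already been assembled. I would deduce the corollary as a direct consequence of Proposition~\ref{prop:invertHHRelementspreprop} specialized at $i=1$, combined with the comparison $t_h^{C_2}\equiv v_h \pmod{I_h}$ from Proposition~\ref{prop:tkvk}.

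First, I would apply Proposition~\ref{prop:invertHHRelementspreprop} with $i=1$. Since $2^{n-i}m = 2^{n-1}m = h$, the proposition asserts that $C_{2}\cdot t_h^{C_2}$ is invertible in $R(k,m)$; in particular $t_h^{C_2}$ itself is a unit. Next, invoking Proposition~\ref{prop:tkvk} at $k=h$ gives
\[
t_h^{C_2} \equiv v_h \pmod{I_h},
\]
so there exists $\epsilon \in I_h$ with $v_h = t_h^{C_2}-\epsilon$.

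The only remaining point is to upgrade this congruence to the statement that $v_h$ is a unit, and for this I would observe that $I_h\subseteq \mathfrak{m}$. Indeed, by Proposition~\ref{prop:tkvk} one has $I_h=I_{C_2}$, and the chain of inclusions
\[
I_{C_2}\subseteq I_{C_4}\subseteq \cdots \subseteq I_{C_{2^n}}
\]
noted after Proposition~\ref{prop:mandC2n} together with the identification $I_{C_{2^n}}=\mathfrak{m}$ of Proposition~\ref{prop:mandC2n} give $I_h\subseteq\mathfrak{m}$.

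Putting everything together, $\epsilon\in\mathfrak{m}$ and $t_h^{C_2}$ is a unit (hence $t_h^{C_2}\notin\mathfrak{m}$), so $v_h = t_h^{C_2}-\epsilon\notin\mathfrak{m}$. Since $R(k,m)$ is a (complete) local ring with maximal ideal $\mathfrak{m}$, this forces $v_h$ to be a unit, completing the proof. There is no real obstacle here; the substantive content has been carried by Proposition~\ref{prop:invertHHRelementspreprop} and by the identification of $\mathfrak{m}$ with $I_{C_{2^n}}$, and the corollary is just a repackaging of these facts via the comparison $t_h^{C_2}\equiv v_h\pmod{I_h}$.
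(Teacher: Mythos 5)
Your proposal is correct and follows exactly the paper's (one-line) proof: specialize Proposition~\ref{prop:invertHHRelementspreprop} at $i=1$ to get that $t_h^{C_2}$ is a unit, and combine with $t_h^{C_2}\equiv v_h \pmod{I_h}$ from Proposition~\ref{prop:tkvk}. Your extra step verifying $I_h = I_{C_2}\subseteq I_{C_{2^n}}=\mathfrak{m}$ (so that the congruence can be upgraded to invertibility in the local ring) is exactly the detail the paper leaves implicit, and it is justified correctly.
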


\begin{proposition}\label{prop:m=Ih}
The ideals
$\mathfrak{m}$ and $I_h$ are equal in $\Rkm$.
\end{proposition}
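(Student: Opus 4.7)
The containment $I_h \subseteq \mathfrak{m}$ is immediate: by Proposition~\ref{prop:tkvk} and Theorem~\ref{theorem:Mform}, the inclusion chain $I_{C_2} \subseteq I_{C_4} \subseteq \cdots \subseteq I_{C_{2^n}}$ gives $I_h = I_{C_2} \subseteq I_{C_{2^n}} = \mathfrak{m}$ (the last equality is Proposition~\ref{prop:mandC2n}). The plan for the reverse inclusion $\mathfrak{m} \subseteq I_h$ is to apply Nakayama's lemma, reducing the problem to linear algebra in the cotangent space of the regular local ring $R(k,m)_0$.

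First, I would pass to the degree-$0$ subring $R(k,m)_0$, which is a complete Noetherian local ring with residue field $k$ and maximal ideal $\mathfrak m_0$. Since $u$ is a degree-$2$ unit, the equality $\mathfrak{m} = I_h$ is equivalent to $(I_h)_0 = \mathfrak{m}_0$ in $R(k,m)_0$. The ring $R(k,m)_0$ is regular local of Krull dimension $h$, and the images of the elements
\[
\bar s_{i,j} \;=\; \overline{\gamma_n^j t_i^{C_{2^n}}/(\gamma_n^j u)^{2^i-1}} \qquad (1\le i\le m-1,\; 0\le j\le 2^{n-1}-1)
\]
together with
\[
\bar\epsilon_j \;=\; \overline{\gamma_n^{j+1}u/\gamma_n^j u - 1}\qquad (0\le j\le 2^{n-1}-1)
\]
form a $k$-basis of $\mathfrak{m}_0/\mathfrak{m}_0^2$, subject to the relation $\bar{2} \equiv -\sum_{j}\bar\epsilon_j \pmod{\mathfrak{m}_0^2}$ obtained from $\prod_j (\gamma_n^{j+1}u/\gamma_n^j u) = -1$. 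In total $\dim_k \mathfrak{m}_0/\mathfrak{m}_0^2 = 2^{n-1}(m-1) + 2^{n-1} = h$.

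Next, the ideal $(I_h)_0$ is generated by the $h$ degree-$0$ elements
\[
2,\quad v_1/u,\quad v_2/u^3,\quad \ldots,\quad v_{h-1}/u^{2^{h-1}-1}.
\]
By Nakayama's lemma it suffices to show that the images of these $h$ elements are $k$-linearly independent in $\mathfrak{m}_0/\mathfrak{m}_0^2$. To establish this, I would use Theorem~\ref{theorem:Mform} iteratively to express $t_k^{C_2}$ as a polynomial in the $\gamma_n^j t_i^{C_{2^n}}$'s modulo $I_k$, together with Proposition~\ref{prop:tkvk} which gives $v_k \equiv t_k^{C_2} \pmod{I_k}$. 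For $1 \le i \le m-1$ this computation produces $\overline{v_i/u^{2^i-1}} \equiv \sum_j \bar s_{i,j} + (\text{combination of }\bar 2, \overline{v_1/u},\ldots,\overline{v_{i-1}/u^{2^{i-1}-1}})\pmod{\mathfrak{m}_0^2}$, contributing a new ``trace'' direction in the $\bar s_{i,*}$-block. For $m\le i\le h-1$ the leading term involves binomial expansions of terms like $(\gamma_n^j u)^{q}$-powers (via $t_m^{C_{2^n}}=u^q$), yielding contributions that land in the $\bar\epsilon_j$-block and, successively, in the off-trace $\bar s_{i',j}$-directions for $i'<m$.

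The main obstacle will be in this final step: showing inductively that the $h\times h$ matrix of coordinates of $\{\bar 2, \overline{v_i/u^{2^i-1}}\}_{i=1}^{h-1}$ in the basis $\{\bar s_{i,j}, \bar\epsilon_j\}$ is invertible over $k$. The strategy is to argue, by induction on $i$, that after subtracting off suitable multiples of the previously-handled generators (and of $\bar 2$), the $i$-th generator $\overline{v_i/u^{2^i-1}}$ provides a basis vector not in the span of its predecessors. Once this triangularity is verified, Nakayama gives $(I_h)_0 = \mathfrak{m}_0$, and hence $I_h = \mathfrak{m}$.
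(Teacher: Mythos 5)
Your reduction is sound as far as it goes, and it is a genuinely different framework from the paper's: the containment $I_h\subseteq\mathfrak{m}$ follows as you say, the passage to $\Rkm_0$ is harmless because the ring is evenly graded with $u$ a unit, the count $\dim_k\mathfrak{m}_0/\mathfrak{m}_0^2=h$ and the relation $\bar 2\equiv-\sum_j\bar\epsilon_j$ are correct, and Nakayama does convert the problem into showing that the $h$ classes $\bar 2,\overline{v_1/u},\ldots,\overline{v_{h-1}/u^{2^{h-1}-1}}$ span the $h$-dimensional cotangent space. (The paper never linearizes: it proves $\mathfrak{m}\subseteq I_h$ by a double induction along the chain $I_{C_2}\subseteq I_{C_4}\subseteq\cdots\subseteq I_{C_{2^n}}$, introducing intermediate ideals $J_i$ and using the $\gamma_r$-invariance from Proposition~\ref{prop:danny2} together with the invertibility statements of Proposition~\ref{prop:invertHHRelementspreprop} to solve for the generators $C_{2^r}\cdot t_i^{C_{2^r}}$ one at a time inside $I_{C_2}$.)

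However, the step you defer --- invertibility of the $h\times h$ matrix --- is not a routine verification to be filled in later; it is the entire content of the proposition, and your sketch of it is too optimistic. For $1\le i\le m-1$ the computation is indeed easy: all $t_j^{C_{2^r}}$ with $j\le m-1$ lie in $\mathfrak{m}$, so the quadratic terms of Theorem~\ref{theorem:Mform} die in $\mathfrak{m}_0^2$ and $\overline{v_i/u^{2^i-1}}$ is the trace $\sum_j\bar s_{i,j}$ modulo earlier generators. But these account for only $m-1$ of the $2^{n-1}(m-1)$ directions in the $\bar s$-block, and the remaining $(2^{n-1}-1)m$ generators $v_m,\ldots,v_{h-1}$ must supply every off-trace $\bar s$-direction and all but one of the $\bar\epsilon$-directions. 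For those, the quadratic terms $\gamma_r t_j^{C_{2^r}}(t_{k-j}^{C_{2^r}})^{2^j}$ are \emph{not} negligible modulo $\mathfrak{m}_0^2$, because $t_m^{C_{2^n}}=u^q$ is a unit and, inductively, so are the elements $t_{2^{n-i}m}^{C_{2^i}}$; a formally quadratic term contributes linearly to the cotangent space precisely when one of its factors is a unit, and deciding which terms do so requires the very invertibility statements proved in Proposition~\ref{prop:invertHHRelementspreprop}. Iterating the recursion through the intermediate groups $C_{2^{n-1}},\ldots,C_4$ then mixes these contributions in a way that is not visibly triangular in your proposed ordering. Until you exhibit the matrix (or an ordering of rows and columns in which it is unitriangular) and verify its invertibility over $k$, the argument establishes only the framework, not the result.
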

\begin{proof}
By Proposition~\ref{prop:tkvk} and Proposition~\ref{prop:mandC2n}, it suffices to prove that $I_{C_2} = I_{C_{2^n}}$.
We will prove that $I_{C_2} = I_{C_{2^r}}$ for all $1 \leq r \leq n$ by using induction on $r$.  The base case, when $r =1$, is trivial.

Let $1<r\leq n$ and suppose we have shown that
$$I_{C_2} = I_{C_{2^{r-1}}}.$$
For simplicity of notations, let $k:= 2^{n-r}m$.  Consider the ideals
\begin{eqnarray*}
I_{C_{2^{r-1}}} &=& (2, C_{2^{r-1}} \cdot t_1^{C_{2^{r-1}}}, \ldots, C_{2^{r-1}} \cdot t_{2k-1}^{C_{2^{r-1}}}, C_{2^{r-1}} \cdot(t_{{2k}}^{C_{2^{r-1}}} - \gamma_{r-1} t_{2k}^{C_{2^{r-1}}})), \\
I_{C_{2^r}} &=& (2, C_{2^{r}} \cdot t_1^{C_{2^r}}, \ldots, C_{2^{r}} \cdot t_{k-1}^{C_{2^r}}, C_{2^{r}} \cdot(t_{k}^{C_{2^r}} - \gamma_r t_{{k}}^{C_{2^r}})).
\end{eqnarray*}
For $1 \leq i \leq k-1$, define $J_i$ to be the ideal
\begin{eqnarray*}
J_i &=& I_{C_{2^{r-1}}} + (2, C_{2^r} \cdot t_1^{C_{2^r}}, \ldots, C_{2^r} \cdot t_{i-1}^{C_{2^r}}) \\
&=& I_{C_2} + (2, C_{2^r} \cdot t_1^{C_{2^r}}, \ldots, C_{2^r} \cdot t_{i-1}^{C_{2^r}}).
\end{eqnarray*}
Note that the equality holds because of our inductive hypothesis.  Also, by Proposition~\ref{prop:danny2}, the ideal $J_i$ is $\gamma_r$-invariant and
\[ J_{i+1} = J_{i} +(C_{2^r} \cdot t_{i}^{C_{2^r}}) .\]
We will use downward induction on $i$ to show that the elements
$$\{C_{2^r} \cdot t_i^{C_{2^r}}, \ldots, C_{2^r} \cdot t_{k-1}^{C_{2^r}}\}$$
are in the ideal $J_i$ for all $1 \leq i \leq k-1$.  In particular, at $i=1$, this will imply that the elements
$$\{C_{2^r} \cdot t_1^{C_{2^r}}, \ldots, C_{2^r} \cdot t_{k-1}^{C_{2^r}}\}$$
are in the ideal $J_1 = I_{C_{2^{r-1}}} = I_{C_2}$.

The base case, when $i=k-1$, is proven as follows.  By Theorem~\ref{theorem:Mform}, we have the formulas
\begin{eqnarray*}
t_{k-1}^{C_{2^{r-1}}} &\equiv& t_{k-1}^{C_{2^r}} + \gamma_r t_{k-1}^{C_{2^r}} + \sum_{j=1}^{k-2}\gamma_{r}{t}_{j}^{C_{2^r}} ({t}_{k-1-j}^{C_{2^r}})^{2^j} \pmod{I_{C_2}} \\
&\equiv& t_{k-1}^{C_{2^r}} + \gamma_r t_{k-1}^{C_{2^r}} \pmod{J_{k-1}}
\end{eqnarray*}
and
\begin{eqnarray*}
t_{2k-1}^{C_{2^{r-1}}} &\equiv& \gamma_r t_{k-1}^{C_{2^r}}(t_k^{C_{2^r}})^{2^{k-1}} + \gamma_r t_{k}^{C_{2^r}}(t_{k-1}^{C_{2^r}})^{2^k} \pmod{I_{C_2}}.
\end{eqnarray*}
Since $t_{k-1}^{C_{2^{r-1}}} \in J_{k-1}$, the first equation implies that
$$t_{k-1}^{C_{2^r}} \equiv \gamma_r t_{k-1}^{C_{2^r}} \pmod{J_{k-1}}.$$
Substituting this into the second equation and using the fact that $t_{2k-1}^{C_{2^{r-1}}} \in I_{C_2}$ from Proposition~\ref{prop:invertHHRelementspreprop} yields the relation
\begin{eqnarray*}
t_{k-1}^{C_{2^r}}(t_k^{C_{2^r}})^{2^{k-1}} + \gamma_r t_{k}^{C_{2^r}}(t_{k-1}^{C_{2^r}})^{2^k} &\equiv& 0 \pmod{J_{k-1}} \\
\Longrightarrow t_{k-1}^{C_{2^r}}\left((t_k^{C_{2^r}})^{2^{k-1}}+ \gamma_r t_{k}^{C_{2^r}}(t_{k-1}^{C_{2^r}})^{2^k-1}\right) &\equiv& 0 \pmod{J_{k-1}}.
\end{eqnarray*}
By Proposition~\ref{prop:invertHHRelementspreprop}, the element $(t_k^{C_{2^r}})^{2^{k-1}}$ is a unit in $\Rkm$.  Since
\[\gamma_r t_{k}^{C_{2^r}}(t_{k-1}^{C_{2^r}})^{2^k-1} \in \mathfrak{m},\] the sum
$$(t_k^{C_{2^r}})^{2^{k-1}}+ \gamma_r t_{k}^{C_{2^r}}(t_{k-1}^{C_{2^r}})^{2^k-1}$$
is a unit in $\Rkm$.  Therefore,
$$t_{k-1}^{C_{2^r}} \equiv 0 \pmod{J_{k-1}}.$$
By Proposition~\ref{prop:danny2}, the ideal $J_{k-1}$ is $\gamma_r$-invariant.  It follows from this that all the elements $C_{2^r} \cdot t_{k-1}^{C_{2^r}}$ are in $J_{k-1}$.  This proves the base case of the induction.

Suppose we have proven the claim for $i+1 \leq k-1$.  To prove the claim for $i$, it suffices to show that the elements $C_{2^r} \cdot t_i^{C_{2^r}}$ are in the ideal $J_i$.  Once we have established this, it will follow from the induction hypothesis that all the elements in
$$\{C_{2^r} \cdot t_i^{C_{2^r}}, \ldots, C_{2^r} \cdot t_{k-1}^{C_{2^r}}\}$$
are also in the ideal $J_i$.  Indeed, the induction hypothesis implies that the elements
$$\{C_{2^r} \cdot t_{i+1}^{C_{2^r}}, \ldots, C_{2^r} \cdot t_{k-1}^{C_{2^r}}\}$$
are in the ideal $J_{i+1}$ and $J_{i+1} = J_i + (C_{2^r} \cdot t_{i}^{C_{2^r}})$.

By Theorem~\ref{theorem:Mform}, we have
\begin{eqnarray*}
t_{i}^{C_{2^{r-1}}} &\equiv& t_{i}^{C_{2^r}} + \gamma_r t_{i}^{C_{2^r}} + \sum_{j=1}^{i-1} \gamma_r t^{C_{2^r}}_{j}(t_{i-j}^{C_{2^r}})^{2^{j}}\pmod{I_{C_2}} \\
&\equiv& t_{i}^{C_{2^r}} + \gamma_r t_{i}^{C_{2^r}} \pmod{J_i}.
\end{eqnarray*}
Since $t_{i}^{C_{2^{r-1}}} \in J_i$, this implies that
 \begin{equation}\label{eq:thefirstequation}
 t_i^{C_{2^r}} \equiv \gamma_r t_i^{C_{2^r}} \pmod{J_i}.
 \end{equation}
By Theorem~\ref{theorem:Mform} again, we have
\begin{eqnarray*}
t_{k+i}^{C_{2^{r-1}}} &\equiv&   t_{k+i}^{C_{2^r}} + \gamma_r t_{k+i}^{C_{2^r}} + \sum_{j=1}^{k+i-1} \gamma_r t^{C_{2^r}}_{j}(t_{k+i-j}^{C_{2^r}})^{2^{j}}\pmod{I_{C_2}} \\
&\equiv&  \gamma_r t_i^{C_{2^r}}(t_k^{C_{2^r}})^{2^i} + \gamma_r t_{i+1}^{C_{2^r}}(t_{k-1}^{C_{2^r}})^{2^{i+1}} + \cdots + \gamma_r t_k^{C_{2^r}}(t_i^{C_{2^r}})^{2^k} \pmod{J_i},
\end{eqnarray*}
where the second equality uses Proposition~\ref{prop:invertHHRelementspreprop}.
Since the induction hypothesis implies that $t_{k+i}^{C_{2^{r-1}}} \in I_{C_2}$, this gives
\begin{equation}\label{eq:thesecondquation}
0 \equiv  \gamma_r t_i^{C_{2^r}}(t_k^{C_{2^r}})^{2^i} + \gamma_r t_{i+1}^{C_{2^r}}(t_{k-1}^{C_{2^r}})^{2^{i+1}} + \cdots + \gamma_r t_k^{C_{2^r}}(t_i^{C_{2^r}})^{2^k}\pmod{J_i} .
\end{equation}
Substituting Equation~\eqref{eq:thefirstequation} into Equation~\eqref{eq:thesecondquation}, we obtain the equality
\begin{eqnarray*}
0 &\equiv& t_{i}^{C_{2^r}}(t_k^{C_{2^r}})^{2^i} + \gamma_r t_{i+1}^{C_{2^r}}(t_{k-1}^{C_{2^r}})^{2^{i+1}} + \cdots + \gamma_r t_k^{C_{2^r}}(t_i^{C_{2^r}})^{2^k} \pmod{J_i}  \\
&\equiv& t_{i}^{C_{2^r}}(t_k^{C_{2^r}})^{2^i} + (t_{i}^{C_{2^r}})^2 \cdot x \pmod{J_i} \hspace{0.2in} \text{(by induction hypothesis)}\\
&\equiv& t_{i}^{C_{2^r}} \left((t_k^{C_{2^r}})^{2^i} +t_{i}^{C_{2^r}} \cdot x \right)\pmod{J_i}\\
&\equiv& t_{i}^{C_{2^r}} \cdot \text{unit} \pmod{J_i}.
\end{eqnarray*}
Here,
\[x\equiv (t_i^{C_{2^r}})^{-2}\left( \gamma_r t_{i+1}^{C_{2^r}}(t_{k-1}^{C_{2^r}})^{2^{i+1}} + \cdots + \gamma_r t_k^{C_{2^r}}(t_i^{C_{2^r}})^{2^k}\right) \pmod{J_i}.\]
This makes sense because each of the elements in $C_{2^r}\cdot t_{i+1}^{C_{2^r}}$, $\ldots$, $C_{2^r}\cdot t_{k-1}^{C_{2^r}}$ is divisible by $t_i^{C_{2^r}}$ modulo $J_i$. Indeed, $t_i^{C_{2^r}} \equiv \gamma_r t_i^{C_{2^r}} $ modulo $J_i$ as shown above and the elements $C_{2^r}\cdot t_{i+1}^{C_{2^r}}$, $\ldots$, $C_{2^r}\cdot t_{k-1}^{C_{2^r}}$ are in $J_{i+1}$ by the induction hypothesis. So,
\[J_{i+1} = J_i + (C_{2^r} \cdot t_{i}^{C_{2^r}}) \equiv ( t_i^{C_{2^r}}) \pmod{J_i}.\]
The last equality holds because $t_k^{C_{2^r}}$ is a unit in $\pi_* E_h$ and $t_{i}^{C_{2^r}} \cdot x \in \mathfrak{m}$.  This implies that $t_i^{C_{2^r}} \equiv 0 \pmod{J_i}$.  Since the ideal $J_i$ is $\gamma_r$-invariant by Proposition~\ref{prop:danny2}, all of the elements $C_{2^r} \cdot t_i^{C_{2^r}}$ are in $J_i$.  This finishes the induction step.

When $i=1$, the elements
$$\{C_{2^r} \cdot t_1^{C_{2^r}}, \ldots, C_{2^r} \cdot t_{k-1}^{C_{2^r}}\}$$
are all in $J_1 = I_{C_2}$.  Applying Theorem~\ref{theorem:Mform} produces the relation
\begin{eqnarray*}
t_{k}^{C_{2^{r-1}}} &\equiv& t_k^{C_{2^r}} + \gamma_r t_k^{C_{2^r}} + \sum_{j=1}^{k-1} \gamma_r t_{j}^{C_{2^r}} (t_{k-j}^{C_{2^r}})^{2^{j}} \pmod{I_{C_2}}.
\end{eqnarray*}
Therefore,
$$0 \equiv t_k^{C_{2^r}} + \gamma_n t_k^{C_{2^r}} \pmod{I_{C_2}},$$
and the elements $C_{2^r} \cdot (t_k^{C_{2^r}} - \gamma_r t_k^{C_{2^r}})$ are in $I_{C_2}$.  It follows that $I_{C_{2^r}} = I_{C_2}$.  This completes the induction step.
\end{proof}

\begin{theorem}\label{thm:mainalg}
The formal group law $(K,\bar u, \Gamma_h)$ of Equation~\ref{eq:fglgammah} has height $h$. Furthermore, the formal group law $(\Rkm ,u ,F_h)$ is a universal deformation of $(K,\bar u, \Gamma_h)$ and
\[  \Gkm \subseteq \G(k, \Gamma_h), \]
where $\Gkm$ is defined as in \eqref{eq:Gkm}.
\end{theorem}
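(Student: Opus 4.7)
The plan is to prove the three claims of the theorem in the order stated: the height claim is essentially immediate from the algebraic results of this section, the universal--deformation claim is the heart of the matter and requires a careful invocation of the Lubin--Tate theorem together with the identification $\mathfrak{m}=I_h$, and the inclusion $\Gkm \subseteq \mathbb{G}(k,\Gamma_h)$ will then follow formally from the universal property.

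For the height, the $2$-series of $F_h$ over $\Rkm$ is $[2]_{F_h}(x) = \sum^{F_h}_{i\geq 0} v_i x^{2^i}$, where we again write $v_i$ for the image of the Araki generator under the classifying map $f$ of \eqref{eq:mapf}. Proposition~\ref{prop:m=Ih} shows $v_1,\ldots,v_{h-1} \in \mathfrak{m}$, so they reduce to zero in $K$, while Corollary~\ref{cor:vhInvertible} gives that $v_h$ is a unit in $\Rkm$, hence reduces to a unit $\bar v_h$ of $K$. Therefore $[2]_{\Gamma_h}(x) = \bar v_h\, x^{2^h} + \cdots$ with $\bar v_h \in K^\times$, and $\Gamma_h$ has height exactly $h$.

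For universality, I would apply the Lubin--Tate theorem to the degree--zero part. The formal group law $\widetilde F_h$ over $\Rkm_0$ classifies a $W(k)$-algebra map
\[
\phi \colon W(k)[\![u_1,\ldots,u_{h-1}]\!] \longrightarrow \Rkm_0,
\]
and it suffices to show $\phi$ is an isomorphism. The ring $\Rkm_0$ is complete local Noetherian with residue field $k$ by its construction as a graded $\mathfrak{m}$--adic completion of a localization of a polynomial $W(k)$--algebra on $h = m\cdot 2^{n-1}$ generators (the orbits $C_{2^n}\cdot t_i^{C_{2^n}}$ for $1\leq i\leq m-1$ and $C_{2^n}\cdot u$); a direct codimension count in this polynomial ring shows that $\mathfrak{m}$ has height $h$, so $\dim \Rkm_0 = h$. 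For surjectivity, once the Lubin--Tate parameters $u_i$ are chosen so that $\phi(u_i) \equiv v_i u^{-(2^i-1)}$ modulo $(2,\phi(u_1),\ldots,\phi(u_{i-1}))$ (up to units)---the standard identification relating Araki and Lubin--Tate parameters---the identity $\mathfrak{m}_0 = (2, v_1 u^{-1},\ldots,v_{h-1} u^{-(2^{h-1}-1)})$ coming from Proposition~\ref{prop:m=Ih} together with completeness and Nakayama gives that $\phi$ is surjective. Finally, since the source is a regular local domain of Krull dimension $h$, any surjection onto a Noetherian local ring of the same dimension is an isomorphism, as a nonzero kernel would strictly reduce dimension.

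For the inclusion $\Gkm \subseteq \mathbb{G}(k,\Gamma_h)$, the $\Gkm$--action on $(\Rkm,u,F_h)$ built via Proposition~\ref{cor:finalact} descends through the quotient $p\colon \Rkm \to K$ to an action on $(K,\bar u,\Gamma_h)$, yielding a homomorphism $\Gkm \to \mathbb{G}(k,\Gamma_h)$. Injectivity follows from the universal property just established: any element $g \in \Gkm$ acting trivially on $(K,\bar u,\Gamma_h)$ must lift uniquely to the identity on $(\Rkm,u,F_h)$, and the $\Gkm$--action on $\Rkm$ is patently faithful (the Galois factor acts faithfully on $W(k)$, the $k^\times[q]$--factor scales $u$ by distinct roots of unity, and the $C_{2^n}$--factor permutes the orbit $C_{2^n}\cdot u$ nontrivially). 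The main obstacle in this plan is the universal--deformation step, specifically the surjectivity argument for $\phi$, which relies on relating the Araki generators to the Lubin--Tate parameters, together with the codimension computation for $\mathfrak{m}$.
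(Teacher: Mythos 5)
Your proposal is correct and follows essentially the same route as the paper: the height claim from $v_1,\ldots,v_{h-1}\in\mathfrak{m}=I_h$ (Proposition~\ref{prop:m=Ih}) together with $v_h$ a unit (Corollary~\ref{cor:vhInvertible}); universality from the identification of $\mathfrak{m}_0$ with $(2,u_1,\ldots,u_{h-1})$ for $u_i=v_iu^{1-2^i}$ combined with the Krull-dimension count and the Lubin--Tate criterion; and the inclusion $\Gkm\subseteq\G(k,\Gamma_h)$ from faithfulness of the action plus the rigidity isomorphism \eqref{eq:morcont}. The only, immaterial, difference is in the universality step, where you obtain the map $W(k)[\![u_1,\ldots,u_{h-1}]\!]\to\Rkm_0$ from the universal property and prove it is an isomorphism by Nakayama plus a dimension argument, whereas the paper writes down the isomorphism directly and verifies the hypotheses of \cite[Proposition~1.1]{lubintate}.
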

\begin{proof}
The ring $\Rkm$ has Krull dimension $h$.  In particular, the regular sequence of elements
\[\{C_{2^n} \cdot t_1^{C_{2^n}}, \ldots, C_{2^n} \cdot t_{m-1}^{C_{2^n}}, C_{2^n} \cdot (u - \gamma_n u) \}\]
in $\Rkm$ forms a generating set for $\mathfrak{m}$.  Since $I_h = \mathfrak{m}$ by Proposition~\ref{prop:m=Ih} and $I_h$ is generated by the $h$ elements
$\{2, v_1, \ldots, v_{h-1} \}$,
these elements also form a regular sequence in $\Rkm$ that generates the maximal ideal $\mathfrak{m}$.

By Corollary~\ref{cor:vhInvertible}, the element $v_h$ is a unit in $\Rkm$.  This shows that
\[\Gamma_h := p^*F_h\]
is a formal group law of height $h$ over the residue field $\Rkm/\mathfrak{m} = K$. We conjugate $F_h$ and $\Gamma_h$ by $u$ to obtain formal group laws $F_h^0$ and $\Gamma_h^0$ over $\Rkm_0$, the homogenous elements of degree zero, and $k$ respectively. Let $\mathfrak{m}_0 = \Rkm_0 \cap \mathfrak{m}$ and $u_i =v_iu^{1-2^i} $ in $\Rkm_0$. The map 
\[W(k)[\![u_1, \ldots, u_{h-1}]\!] \to \Rkm_0 \]
is an isomorphism, as can be verified by filtering both sides by the maximal ideal $(2,u_1, \ldots, u_{h-1}) = \mathfrak{m}_0$. Because of its relationship to $v_i$, the element $u_i$ is  by definition the coefficients of $x^{2^i}$ in the $2$-series of $F_h^0$ modulo $(2, u_1, \ldots, u_{i-1})$. It follows that $(\Rkm_0, F_h^0)$ satisfies all the conditions of \cite[Proposition 1.1]{lubintate}, and so is a universal deformation for $(k,\Gamma_h^0)$.

Finally, since the action of $\Gkm$ on $\Rkm$ is faithful and via continuous ring isomorphisms, $\Gkm \subseteq \G(k, \Gamma_h)$ by \eqref{eq:morcont}.
\end{proof}

This concludes the algebra needed to establish Theorem~\ref{thm:model}.

\section{An equivariant Lubin--Tate spectrum}\label{sec:proofs-top}
In this section, we turn to study the Lubin--Tate spectrum $E(k,\Gamma_h)$ and prove Theorems~\ref{thm:model} and \ref{thm:IntroEquivOrientation}.  The universal deformation $F_h$ of $\Gamma_h$ studied in the previous section defines
 a Lubin--Tate spectrum $E(k,\Gamma_h)$.
By the Goerss--Hopkins--Miller theorem, the action of $\Gkm$ on $\Gamma_h$ gives rise to an action of $\Gkm$ on our Lubin--Tate theory $E(k,\Gamma_h)$ by maps of $E_{\infty}$-ring spectra.  We then promote our spectrum $E(k,\Gamma_h)$ to a $C_{2^n}$-spectrum and show that as a $C_{2^n}$-spectrum, $E(k, \Gamma_h)$ has an equivariant orientation in the sense that there is a $C_{2^n}$-equivariant map
\[\MUCn \longrightarrow E(k, \Gamma_h)\]
that classifies $F_h$ on underlying homotopy groups.

We will also prove that the homotopy fixed point spectrum $E(k, \Gamma_h)^{h\Ckm}$ of $E(k, \Gamma_h)$ by a subgroup $\Ckm \subset \mathbb{G}(k, \Gamma_h)$ of order coprime to 2 also admits a $C_{2^n}$-equivariant orientation.

\subsection{The classical Lubin--Tate spectrum}\label{sec:classLT}

To obtain $E(k,\Gamma_h)$, we simply apply the Landweber exact functor theorem and the Goerss--Hopkins--Miller theorem. Combined, these give the following result.
\begin{theorem}\label{thm:maintop}
There is a complex orientable $E_{\infty}$-ring spectrum $E(k,\Gamma_h)$ such that $\pi_*E(k,\Gamma_h) =\Rkm$. The spectrum $E(k,\Gamma_h)$ has a continuous action of $\G(k,\Gamma_h)$ by maps of $E_{\infty}$-ring spectra which refines the action of $\G(k,\Gamma_h)$ on $\Rkm$.
\end{theorem}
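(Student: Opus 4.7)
The plan is to obtain $E(k,\Gamma_h)$ as the spectrum representing the Landweber-exact cohomology theory associated with the formal group law $F_h$ over $\Rkm$, and then to upgrade it to an $E_\infty$-ring spectrum with a continuous $\mathbb{G}(k,\Gamma_h)$-action via the Goerss--Hopkins--Miller theorem. All of the substantive input has already been established in Theorem~\ref{thm:mainalg}; what remains is to verify the standard hypotheses of these two theorems.

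First, I would check that the map $BP_* \to \Rkm$ classifying $F_h$ is Landweber exact. By Theorem~\ref{thm:mainalg}, the sequence $(2, v_1, \ldots, v_{h-1})$ is a regular sequence in $\Rkm$ and generates the maximal ideal $\mathfrak{m}$. By Corollary~\ref{cor:vhInvertible}, the element $v_h$ is a unit in $\Rkm$, so $\Rkm / I_{h+1} = 0$ and the regularity condition on $v_i$ for $i > h$ is vacuous. Thus $(v_i)_{i \geq 0}$ (with $v_0 = 2$) acts as a regular sequence in the Landweber sense, and the functor
\[
X \longmapsto \Rkm \otimes_{BP_*} BP_*(X)
\]
is a homology theory represented by a complex orientable spectrum $E(k,\Gamma_h)$ with $\pi_* E(k,\Gamma_h) \cong \Rkm$ and formal group law $F_h$.

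Next, I would invoke the Goerss--Hopkins--Miller theorem \cite{rezk, GHMTheorem} to promote $E(k,\Gamma_h)$ to an $E_\infty$-ring spectrum equipped with a continuous action of $\mathrm{Aut}(K, \Gamma_h^0) = \mathbb{G}(k,\Gamma_h)$ by maps of $E_\infty$-ring spectra, refining the action on $\Rkm$. The input required is that the degree-zero piece $(\Rkm_0, F_h^0)$ is the Lubin--Tate universal deformation of $(k,\Gamma_h^0)$, which was precisely the content of Theorem~\ref{thm:mainalg} (together with the identification of deformation parameters $u_i = v_i u^{1-2^i}$ carried out in its proof). The naturality statement in the Goerss--Hopkins--Miller theorem then yields a continuous action of the full automorphism group $\mathbb{G}(k,\Gamma_h)$ by $E_\infty$-ring maps.

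The only subtlety to check is that this topological action refines the algebraic $\mathbb{G}(k,\Gamma_h)$-action on $\Rkm$ described in Section~\ref{sec:actions}. This follows because, by construction, the action on $\pi_*E(k,\Gamma_h) \cong \Rkm$ is induced by the universal property of $(\Rkm_0, F_h^0)$: an automorphism $(f,\psi) \in \mathbb{G}(k,\Gamma_h)$ lifts uniquely to a continuous $\star$-isomorphism of the universal deformation, which agrees with the action described in \eqref{eq:action1}, \eqref{eq:action2}, and \eqref{eq:action3} by the uniqueness of such lifts. Since the argument is essentially a citation of two well-established theorems once the algebraic groundwork of Theorem~\ref{thm:mainalg} is in hand, I do not anticipate a main obstacle here; the paper's real work was the verification of the universal deformation and regularity properties, which has already been carried out.
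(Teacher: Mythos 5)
Your proposal is correct and follows essentially the same route as the paper, which also obtains $E(k,\Gamma_h)$ by combining the Landweber exact functor theorem (using the regularity of $(2,v_1,\dots,v_{h-1})$ and the invertibility of $v_h$ from Theorem~\ref{thm:mainalg} and Corollary~\ref{cor:vhInvertible}) with the Goerss--Hopkins--Miller theorem applied to the universal deformation $(\Rkm_0, F_h^0)$. The paper in fact gives no more detail than a citation of these two theorems, so your fuller verification of their hypotheses is, if anything, more complete than the original.
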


This finishes the proof of Theorem~\ref{thm:model}. We will now give a description of $\pi_*E(k,\Gamma_h)$ which emphasizes the structure of $\pi_0$, as mentioned in Remark~\ref{rem:pi0remark}.
\begin{proposition}\label{thm:modelE0-alg}
There are elements
\[C_{2^n}\cdot \tau_i = \{\tau_i, \gamma_n \tau_i, \ldots, \gamma_{n}^{2^{n-1}-1}\tau_i \} \subseteq \Rkm_0\]
for $1\leq i\leq m-1$ and
 \[\overline{C_{2^n}\cdot \tau_m} = \{\tau_m, \gamma_n \tau_m, \ldots, \gamma_{n}^{2^{n-1}-2}\tau_m \} \subseteq \Rkm_0\]
 (note that there is no generator ``$\gamma_{n}^{2^{n-1}-1}\tau_m $'')
such that
\[ \Rkm \cong  W(k) [\![ C_{2^n}\cdot \tau_1, \ldots, C_{2^n}\cdot \tau_{m-1},\overline{C_{2^n}\cdot \tau_m} ]\!][u^{\pm1}] .\]
The $C_{2^n}$-action on $\Rkm$ is determined by the formula
$$f_{\gamma_n} (\gamma_n^r x) = \gamma_n^{r+1} x $$
for $x = \tau_i$ ($1 \leq i \leq m$) and $r\leq 2^{n-1}-2$. Furthermore
\begin{enumerate}
\item for $1\leq i\leq m$,
$f_{\gamma_n} ( \gamma_n^{2^{n-1}-1} \tau_i) =\tau_i$,
\item
$$f_{\gamma_n}(\gamma_n^{2^{n-1}-2}\tau_m)= 1+\frac{1}{(1-\tau_m)(1-\gamma_n\tau_m)\ldots (1-\gamma_n^{2^{n-1}-2} \tau_m)}.$$
\item For $1\leq r \leq 2^{n-1}-1$,
\begin{align*}
f_{\gamma_n^r}( u) =   (1-\gamma^{r-1} \tau_m)(1-\gamma^{r-2}\tau_m)\ldots (1-\tau_m ) u
\end{align*}
and $f_{\gamma_n^{2^{n-1}}}(u) = -u$.
\end{enumerate}
The group $\Gal(k/\F_2)$ acts on $\Rkm$ via its action on the coefficients $W(k)$, and the action of $\zeta \in k^{\times}[q]$ fixes $\tau_m$ and is determined by
\begin{eqnarray*}
f_\zeta(u) &=& \zeta^{-1}u, \\
f_\zeta(\tau_{i}) &=& \zeta^{2^i-1}\tau_i
\end{eqnarray*}
for $1\leq i \leq m-1$.
\end{proposition}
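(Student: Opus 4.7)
The plan is to construct explicit degree-zero lifts $\tau_i$ from the existing generators of $\Rkm$, verify the stated action formulas by direct substitution, and establish the ring presentation via a comparison of complete regular local rings of equal Krull dimension.

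For $1 \leq i \leq m-1$ and $0 \leq j \leq 2^{n-1}-1$, I set $\gamma_n^j \tau_i := (\gamma_n^j t_i^{C_{2^n}})/(\gamma_n^j u)^{2^i-1}$; this element has degree zero because $|t_i^{C_{2^n}}| = 2(2^i-1)$. For $i = m$ and $0 \leq l \leq 2^{n-1}-2$, I set $\gamma_n^l \tau_m := 1 - (\gamma_n^{l+1} u)/(\gamma_n^l u)$, also of degree zero. Each such element lies in the maximal ideal $\mathfrak{m}$. The action formulas then reduce to routine substitutions using the known action of $\gamma_n$ on $t_i^{C_{2^n}}$ and $u$. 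The only subtle point is at the end of each orbit: for $i \leq m-1$, the identity $f_{\gamma_n}(\gamma_n^{2^{n-1}-1}\tau_i) = \tau_i$ comes from the oddness of $2^i-1$, and for $i = m$ it follows from $\gamma_n^{2^{n-1}} u = -u$. Unwinding $\gamma_n^{l+1}u = (1 - \gamma_n^l \tau_m)(\gamma_n^l u)$ yields formula (3), and the sign relation then forces
\[
\prod_{l=0}^{2^{n-1}-1}(1 - \gamma_n^l \tau_m) = -1,
\]
which both determines $\gamma_n^{2^{n-1}-1}\tau_m$ in terms of the earlier generators (explaining the bar in $\overline{C_{2^n}\cdot\tau_m}$) and yields formula (2). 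The action formulas for $\Gal$ and $k^\times[q]$ are immediate from $W(k)$-linearity and the known actions $f_\zeta(u) = \zeta^{-1}u$, $f_\zeta(t_i^{C_{2^n}}) = t_i^{C_{2^n}}$.

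For the ring isomorphism, let
\[
A := W(k)[\![\{\gamma_n^j\tau_i\}_{1\le i \le m-1,\, 0\le j \le 2^{n-1}-1},\,\{\gamma_n^l\tau_m\}_{0\le l \le 2^{n-1}-2}]\!][u^{\pm 1}],
\]
a power series ring in $(m-1)2^{n-1} + (2^{n-1}-1) = h-1$ variables over $W(k)$ with $u^{\pm 1}$ adjoined, hence a complete regular local ring of Krull dimension $h$. By Theorem~\ref{thm:mainalg}, $\Rkm$ is also regular of Krull dimension $h$. The tautological map $\Phi \colon A \to \Rkm$ is surjective: the identities $\gamma_n^j u = u\prod_{l=0}^{j-1}(1 - \gamma_n^l\tau_m)$ and $\gamma_n^j t_i^{C_{2^n}} = \gamma_n^j\tau_i \cdot (\gamma_n^j u)^{2^i-1}$, together with the fact that $1 - \gamma_n^l\tau_m$ is a unit in $\Rkm$, express every original generator of $\Rkm$ in $\Phi(A)$ modulo any power of $\mathfrak{m}$, and a standard Nakayama/inverse-limit argument then upgrades this to genuine surjectivity. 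A surjective local homomorphism between complete Noetherian local domains of equal Krull dimension must be an isomorphism, and this completes the proof of the presentation.

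The main obstacle is the asymmetry introduced by the sign relation $\gamma_n^{2^{n-1}}u = -u$: it prevents $\gamma_n^{2^{n-1}-1}\tau_m$ from being an independent variable and instead expresses it as a nontrivial rational function of the other generators. Once the change of variables is set up to absorb this asymmetry, the remaining dimension-count argument rests entirely on the regularity statement in Theorem~\ref{thm:mainalg}, so no new algebra is required.
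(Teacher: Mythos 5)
Your construction is exactly the paper's: the same degree-zero elements $\gamma_n^j\tau_i = (\gamma_n^j t_i^{C_{2^n}})(\gamma_n^j u)^{1-2^i}$ and $\gamma_n^l\tau_m = 1-(\gamma_n^{l+1}u)/(\gamma_n^l u)$, with the action formulas verified by the same substitutions and the same telescoping identity $\frac{1}{1-\gamma_n^l\tau_m}=\frac{\gamma_n^l u}{\gamma_n^{l+1}u}$. The only difference is that you also spell out the ring presentation via the surjectivity-plus-Krull-dimension argument, whereas the paper's proof verifies only the action formulas and leaves the presentation implicit; your added argument is correct (modulo reading ``complete local of dimension $h$'' as a statement about the degree-zero part, consistent with Theorem~\ref{thm:mainalg}).
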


\begin{proof}
Let $\gamma=\gamma_n$.
For $1\leq i <m$ and $0\leq r \leq 2^{n-1}-1$, let $\gamma^r\tau_i = \gamma^r(t_i^{C_{2^n}}u^{1-2^i})$. Then
\begin{align*}
\gamma (\gamma^{r-1}\tau_i ) = \begin{cases}  \gamma^r\tau_i & 0<r<2^{n-1}, \\
\tau_i & r=2^{n-1}.
\end{cases}
\end{align*}
For $0\leq r \leq 2^{n-1}-2$, let
 $\gamma^r(\tau_m)=\gamma^r(1-u^{-1}\gamma u)$. Clearly, for $0< r \leq 2^{n-1}-2$, $\gamma (\gamma^{r-1}\tau_m) = \gamma^{r}\tau_m$. Furthermore,
 \[\gamma(\gamma^{2^{n-1}-2} \tau_m) = 1+ \frac{u}{\gamma^{2^{n-1}-1} u}.\]
Since
 \[\frac{1}{1-\gamma^r \tau_m} = \frac{\gamma^r u}{\gamma^{r+1} u}\]
 for $0\leq r \leq 2^{n-1}-2$, we conclude that
  \[\gamma(\gamma^{2^{n-1}-2} \tau_m)  = 1+\frac{1}{(1-\tau_m)\cdots (1-\gamma^{2^{n-1}-2} \tau_m)},\]
  which proves the claim in the statement of the theorem. The action of $\Ckm$ is clear from the definition of the $\tau_i$s.
\end{proof}

Theorem~\ref{thm:maintop} implies that $E(k,\Gamma_h)$ has an action of $\Gkm$ by maps of $E_{\infty}$-ring spectra.   Before promoting $E(k,\Gamma_h)$ to an equivariant spectrum, we prove the following splitting result.

\begin{theorem}\label{thm:thm4.8}
Let
\begin{equation*}
\Ckm =  \Gal \ltimes k^{\times}[q] \subseteq \Gkm.\end{equation*}
There is a $\Gkm$-equivariant map
\[   E(k, \Gamma_h) \to E(k,\Gamma_h)^{h\Ckm }\]
which splits the natural map $E(k,\Gamma_h)^{h\Ckm } \to E(k,\Gamma_h)$.
\end{theorem}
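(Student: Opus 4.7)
The plan is to reduce the problem to showing that the Tate spectrum $E(k,\Gamma_h)^{t\Ckm}$ vanishes. If this holds, the norm map $E(k,\Gamma_h)_{h\Ckm} \xrightarrow{\simeq} E(k,\Gamma_h)^{h\Ckm}$ is an equivalence, and then the composition
\[E(k,\Gamma_h) \longrightarrow E(k,\Gamma_h)_{h\Ckm} \xrightarrow{\simeq} E(k,\Gamma_h)^{h\Ckm}\]
provides the desired splitting of the natural inclusion $E(k,\Gamma_h)^{h\Ckm} \to E(k,\Gamma_h)$. The $C_{2^n}$-equivariance (and hence $\Gkm$-equivariance) of this composite is automatic from the product structure $\Gkm = C_{2^n}\times\Ckm$, since the $C_{2^n}$-action on $R(k,m)$ commutes with the $\Ckm$-action by construction.

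To establish the Tate vanishing, I would decompose $\Ckm = \Gal \ltimes k^\times[q]$ and treat each factor separately. The key structural observation is that $\Ckm$ acts trivially on the $BP$-theoretic generators $t_i^{C_{2^n}}$ of $R(k,m)$, and only on the coefficient ring $W(k)[u^{\pm 1}]$: the subgroup $k^\times[q]$ rescales $u$ via $u\mapsto \zeta^{-1}u$, while $\Gal$ acts on $W(k)$ through its Galois action. Since $q = 2^m-1$ is odd and hence invertible in the $2$-complete ring $R(k,m)$, multiplication by $q$ is a self-equivalence of $E(k,\Gamma_h)$, which forces $E(k,\Gamma_h)^{tk^\times[q]} \simeq 0$. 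For the residual $\Gal$-action on $E(k,\Gamma_h)^{hk^\times[q]}$: the unramified Witt extension $W(k)/\Z_2$ is a finite \'etale Galois extension, and the normal basis theorem gives $W(k) \cong \Z_2[\Gal]$ as $\Z_2[\Gal]$-modules. As an induced module, $W(k)$ has vanishing Tate cohomology in every degree, and since $\pi_* E(k,\Gamma_h)^{hk^\times[q]} \cong R(k,m)^{k^\times[q]}$ is (up to completion) the extension of scalars $W(k) \otimes_{\Z_2} R(k,m)^{\Ckm}$ as a $\Gal$-module, the Tate cohomology of $\pi_* E(k,\Gamma_h)^{hk^\times[q]}$ also vanishes. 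The Tate spectral sequence then forces $(E(k,\Gamma_h)^{hk^\times[q]})^{t\Gal} \simeq 0$, and a Lyndon--Hochschild--Serre style comparison yields $E(k,\Gamma_h)^{t\Ckm} \simeq 0$.

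The main obstacle will be making the Galois part of this argument rigorous at the spectrum level rather than only at the level of $\pi_*$. While the $k^\times[q]$-part is straightforward, the $\Gal$-part requires realizing the normal-basis splitting on the spectrum level. This should be achievable using Rognes' framework of Galois extensions of structured ring spectra, whereby faithful finite \'etale Galois extensions automatically have vanishing Tate spectrum, combined with the fact that $E(k,\Gamma_h)^{hk^\times[q]}$ arises as such a Galois extension over $E(k,\Gamma_h)^{h\Ckm}$ with Galois group $\Gal$.
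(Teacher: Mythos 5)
There is a genuine gap, and it sits at the very first step: the inference ``$E^{t\Ckm}\simeq 0$, hence the norm $E_{h\Ckm}\to E^{h\Ckm}$ is an equivalence, hence $E\to E_{h\Ckm}\simeq E^{h\Ckm}$ splits the inclusion'' is not valid. Vanishing of the Tate construction does not make $X^{hG}$ a retract of $X$. The composite
\[ X^{hG}\longrightarrow X\longrightarrow X_{hG}\xrightarrow{\ \mathrm{Nm}\ } X^{hG}\]
induces multiplication by $|G|$ on the $0$-line of the relevant spectral sequences, not the identity. The cleanest counterexample is exactly the situation at hand: if $X=G_+\smsh Y$ is induced, then $X^{tG}\simeq \ast$ automatically, the norm is an equivalence, and your composite becomes the diagonal $Y\to\prod_{g}Y$ followed by the fold map, i.e.\ $|G|\cdot\mathrm{id}_Y$. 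Since $\Ckm$ contains $\Gal(k/\F_2)$, whose order $[k:\F_2]$ is frequently even, and $E(k,\Gamma_h)$ is $2$-complete, $|\Ckm|\cdot\mathrm{id}$ is not an equivalence and your proposed map is not a splitting. (Your Tate-vanishing computations themselves are fine --- the odd-order part because $|k^\times[q]|$ is invertible $2$-adically, the Galois part because $W(k)\cong\Z_2[\Gal]$ is induced --- but they do not yield the conclusion you draw from them.)

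The repair, which is what the paper does, is to upgrade ``Tate vanishes'' to the structural statements that actually produce retractions. For $k^\times[q]$ your observation already suffices: because the order is odd and invertible, one has the genuine averaging idempotent $\varepsilon=\tfrac1q\sum_i[\zeta^i]$ acting on $E$, and $E^{hk^\times[q]}\to E\to\varepsilon^{-1}E$ is an equivalence; this is the paper's first step and is essentially your argument made explicit. For the Galois part one needs the strictly stronger fact that $E^{hk^\times[q]}$ is an \emph{induced} $\Gal$-spectrum on $E^{h\Ckm}$, i.e.\ the Devinatz--Hopkins/Bobkova--Goerss equivalence $\Gal_+\smsh E^{h\Ckm}\simeq E^{hk^\times[q]}$ (proved by checking that multiplication $E^{h\mathbb S}\smsh E^{h\Ckm}\to E^{hk^\times[q]}$ is an equivalence); the retraction is then projection onto a wedge summand, not the norm map. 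Your appeal to Rognes-style Galois theory is pointing in the right direction --- faithful $G$-Galois extensions satisfy $A\simeq G_+\smsh B$ as $B$-modules --- but you must extract the wedge-summand projection from that equivalence rather than route the splitting through $E_{h\Ckm}$.
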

\begin{proof}
In this proof, let $E=E(k,\Gamma_h)$, $\G =\G(k,\Gamma_h)$ and $\mathbb{S} = \mathbb{S}(k,\Gamma_h)$.
Note that the group $k^{\times}[q] $ is cyclic.  Let
 $\ell$ be its order and $\zeta$ be a generator so that $\zeta^\ell=1$. Define
\[\varepsilon := \frac{1}{q} \sum_{i=0}^{\ell-1} [\zeta^i] .\]
Letting $[\zeta^i]$ act on $E$ via the action of $k^{\times}[q]$, we obtain a map
\[E \stackrel{\varepsilon}{\longrightarrow} E.\]
 Let $\varepsilon^{-1}E$ be the telescope of $\varepsilon$. Note that since any element of $\Gal$ permutes the set $\{\zeta^i\}_{i=0}^{\ell-1}$, $\varepsilon$ commutes with the action of $\Gal$.  Similarly, $\varepsilon$ commutes with the action of $C_{2^n}$ and $k^{\times}[q]$.  Therefore, $\Gkm$ acts on $\varepsilon^{-1}E$ and the map
 \[E \to \varepsilon^{-1}E\]
is  $\Gkm$-equivariant. Furthermore, the composite
\[ E^{hk^{\times}[q]} \to E \to \varepsilon^{-1}E \]
is a $\Gkm$-equivariant map which is an isomorphism on homotopy groups. This can be verified by using the collapse of the homotopy fixed points spectral sequence for $E^{hk^{\times}[q]}$. Therefore, the composite is a $\Gkm$-equivariant equivalence.

Now, note that
\[E^{h\Ckm }  \simeq (E^{hk^{\times}[q] })^{h\Gal} .\]
By \cite[Lemma 1.37]{BobkovaGoerss}, there is a $\Gal$-equivariant equivalence
\[\Gal_+ \smsh E^{h\Ckm} \to  E^{hk^{\times}[q] .}\]
This is shown by first proving that the composite
\begin{equation}\label{eq:longeq}
E^{h\mathbb{S}} \smsh E^{h\Ckm} \to E^{hk^{\times}[q ]} \smsh E^{hk^{\times}[q]} \to  E^{hk^{\times}[q]} \end{equation}
obtained by the natural maps followed by multiplication
is a weak equivalence, and then appealing to the $\Gal$-equivariant equivalence
\[ \Gal_+ \smsh E^{h\mathbb{G}}  \to E^{h\mathbb{S}} \]
proved in \cite[Lemma 1.36]{BobkovaGoerss}. However, note that the latter map is a $\Gkm$-equivalence if we equip both spectra with trivial $C_{2^n} \times k^{\times}[q]$-actions. Furthermore, \eqref{eq:longeq} is also a $\Gkm$-equivariant map. Therefore, the equivalence
\[\Gal_+ \smsh E^{h\Ckm} \simeq  E^{hk^{\times}[q]}\] is $\Gkm$-equivariant. It follows that $E^{h\Ckm}$ splits off equivariantly from $E^{hk^{\times}[q]}$, hence from $E$.
\end{proof}

\subsection{$E(k, \Gamma_h)$ as an equivariant spectrum}\label{sec:equivspectrum}
We will now upgrade $E(k, \Gamma_h)$ to a commutative $C_{2^n}$-spectrum.  By Theorem~\ref{thm:maintop}, we may view $E(k, \Gamma_h)$ as a commutative ring object in naive $C_{2^n}$-spectra.  The functor
$$X \longmapsto F(E{C_{2^n}}_+, X)$$
takes naive equivalences to genuine equivariant equivalences, and hence allows us to view $E(k, \Gamma_h)$ as a genuine $C_{2^n}$-equivariant spectrum.

The commutative ring spectrum structure on $E(k, \Gamma_h)$ gives rise to an action of a trivial $E_{\infty}$-operad on $F(E{C_{2^n}}_+, E(k, \Gamma_h))$.  Work of Blumberg--Hill \cite{BlumbergHill} shows that this is sufficient to ensure that $F(E{C_{2^n}}_+, E(k, \Gamma_h))$ is a genuine equivariant commutative ring spectrum (see also \cite[Section 2.2]{hill_meier}).  Therefore, by passing to the cofree localizations, we may view $E(k, \Gamma_h)$ and $E(k, \Gamma_h)^{h\Ckm}$ as commutative $C_{2^n}$-spectra.

\begin{proof}[Proof of Theorem~\ref{thm:IntroEquivOrientation}]
The Real orientation theorem of \cite{hahnshi} implies that the complex orientation
\[MU=i^*_e\MUR \to i^*_eE(k,\Gamma_h)\]
refines to a Real orientation
\[\MUR \to i^*_{C_2}E(k, \Gamma_h).\]
The $2$-typical nature of our formal group laws imply that these maps factor through $BP = i^*_e\BPR$ and $\BPR$ respectively.

Applying the norm functor to the maps
$$\MUR \longrightarrow \BPR \longrightarrow i^*_{C_2}E(k, \Gamma_h)$$
and post-composing with the counit map of the norm-restriction adjunction gives maps
$$\MUCn \longrightarrow \BPCn \longrightarrow  N_{C_2}^{C_{2^n}}  i^*_{C_2}E(k, \Gamma_h) \longrightarrow E(k, \Gamma_h)$$
of $C_{2^n}$-ring spectra.  Consider the composite
\[\phi: \BPCn \longrightarrow  N_{C_2}^{C_{2^n}}  i^*_{C_2}E(k, \Gamma_h) \longrightarrow E(k, \Gamma_h). \]
By construction, $\pi_*^e\phi$ is the map
$f \colon \Rn \to \Rkm  $
defined in \eqref{eq:mapf}, which is the same map as in the statement of Theorem~\ref{thm:IntroEquivOrientation}.
\end{proof}

In fact, we can obtain a refinement of Theorem~\ref{thm:IntroEquivOrientation}.  There is a similar $C_{2^n}$-equivariant map from $\MUCn$ to the fixed point spectrum $E(k, \Gamma_h)^{h\Ckm}$, where $\Ckm \subset \mathbb{G}(k, \Gamma_h)$ is the subgroup defined in Theorem~\ref{thm:thm4.8}.  Suppose $k^{\times}[q]$ has $\alpha$ elements, where $1\leq \alpha \leq 2^m-1=q$.  It follows from the description of the $\Gkm$-action on $\pi_*E(k,\Gamma_h)$ (Theorem~\ref{thm:model}) that
\[
(\pi_*E(k,\Gamma))^{\Ckm} \cong \mathbb{Z}_2 [C_{2^n} \cdot t_1^{C_{2^n}}, \ldots, C_{2^n} \cdot t_{m-1}^{C_{2^n}}, C_{2^n}\cdot u^{\alpha}][C_{2^m} \cdot (u^{\alpha})^{-1}]^{\wedge}_{\mathfrak{m'}},
\]
where
$$\mathfrak{m}'=(C_{2^n}\cdot t_1^{C_{2^n}}, \ldots, C_{2^n}\cdot t_{m-1}^{C_{2^n}}, C_{2^n}\cdot (u^\alpha-\gamma_n u^\alpha)).$$
If $\alpha=q$, then $u^\alpha = u^{2^m-1} = t_m^{C_{2^n}}$ and
\[
(\pi_*E(k,\Gamma))^{\Ckm} \cong \mathbb{Z}_2 [C_{2^n} \cdot t_1^{C_{2^n}}, \ldots, C_{2^n} \cdot t_{m-1}^{C_{2^n}}, C_{2^n}\cdot t_m^{C_{2^n}}][C_{2^m} \cdot (t_m^{C_{2^n}})^{-1}]^{\wedge}_{\mathfrak{m'}},
\]
where
$$\mathfrak{m}'=(C_{2^n}\cdot t_1^{C_{2^n}}, \ldots, C_{2^n}\cdot t_{m-1}^{C_{2^n}}, C_{2^n}\cdot (t_{m}^{C_{2^n}}-\gamma_n t_{m}^{C_{2^n}})).$$

Furthermore,  the homotopy fixed points spectral sequence
\[
E_2^{s,t} = H^s(\Ckm, \pi_t E(k,\Gamma_h)) \Longrightarrow  \pi_{t-s} E(k,\Gamma_h)^{h\Ckm}
\]
has the property that $E_2^{>0,*} =0$. This follows from the fact that the action of $\Gal$ on $\pi_*E(k,\Gamma)$ is free, and that the order of $k^{\times}[q]$ is odd.  Therefore, the homotopy fixed points spectral sequence collapses and we have the following result.
\begin{proposition}\label{prop:CqFixedPoint}
There is an isomorphism
\[
\pi_*E(k,\Gamma_h)^{h\Ckm} \cong \mathbb{Z}_2 [C_{2^n} \cdot t_1^{C_{2^n}}, \ldots, C_{2^n} \cdot t_{m-1}^{C_{2^n}}, C_{2^n}\cdot u^{\alpha}][C_{2^m} \cdot (u^{\alpha})^{-1}]^{\wedge}_{\mathfrak{m'}},
\]
where
\[\mathfrak{m}'=(C_{2^n}\cdot t_1^{C_{2^n}}, \ldots, C_{2^n}\cdot t_{m-1}^{C_{2^n}}, C_{2^n}\cdot (u^\alpha-\gamma_n u^\alpha)).\]
If $\alpha=q$, then $u^\alpha=u^{2^m-1} = t_m^{C_{2^n}}$.
\end{proposition}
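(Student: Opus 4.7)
The strategy is to run the homotopy fixed points spectral sequence
\[
E_2^{s,t} = H^s(\Ckm; \pi_t E(k,\Gamma_h)) \Longrightarrow \pi_{t-s} E(k,\Gamma_h)^{h\Ckm}
\]
and show that $E_2^{s,*} = 0$ for every $s > 0$, so that $\pi_* E(k,\Gamma_h)^{h\Ckm}$ is identified with the ring of invariants $(\pi_* E(k,\Gamma_h))^{\Ckm}$ on the edge. The remainder is a direct computation of these invariants from the explicit $\Ckm$-action on $\Rkm = \pi_* E(k,\Gamma_h)$ recorded in Theorem~\ref{thm:model}.

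For the vanishing, the plan is to use the Lyndon--Hochschild--Serre spectral sequence associated to the extension $1 \to k^{\times}[q] \to \Ckm \to \Gal \to 1$. The normal subgroup $k^{\times}[q]$ is cyclic of order $\alpha$ dividing $q = 2^m - 1$, hence of odd order; since $\pi_* E(k,\Gamma_h)$ is a $\Z_{(2)}$-module, $H^{>0}(k^{\times}[q]; \pi_* E(k,\Gamma_h)) = 0$. This reduces the problem to showing $H^{>0}(\Gal; N) = 0$, where $N := (\pi_* E(k,\Gamma_h))^{k^{\times}[q]}$. From \eqref{eq:action2} and \eqref{eq:action3}, $k^{\times}[q]$ fixes each $\gamma_n^j t_i^{C_{2^n}}$ and rescales $u$ by $\zeta^{-1}$, so $N$ is $W(k)$-linearly spanned by monomials in the $C_{2^n}\cdot t_i^{C_{2^n}}$ and the $C_{2^n}$-translates of $u^{\alpha}$. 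Since $\Gal$ acts only on the coefficient ring $W(k)$, the module $N$ is a free $W(k)$-module on a $\Gal$-fixed basis. The normal basis theorem for the unramified extension $W(k)/\Z_2$ exhibits $W(k)$ as a free $\Z_2[\Gal]$-module, so $N$ is a free $\Z_2[\Gal]$-module and hence $H^{>0}(\Gal; N) = 0$, as required.

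To identify $(\pi_* E(k,\Gamma_h))^{\Ckm}$ with the displayed ring, I proceed in the same two stages. Taking $k^{\times}[q]$-invariants replaces the generators $C_{2^n}\cdot u$ by $C_{2^n}\cdot u^{\alpha}$ (since $\alpha$ is the order of $k^{\times}[q]$ and $u$ is rescaled by $\zeta^{-1}$) and leaves the $C_{2^n}\cdot t_i^{C_{2^n}}$ for $1\le i\le m-1$ unchanged. Subsequently taking $\Gal$-invariants replaces $W(k)$ by $W(k)^{\Gal} = \Z_2$. The $\Ckm$-invariance of the maximal ideal $\mathfrak{m}$ (in the spirit of Proposition~\ref{prop:killm}) allows the $\mathfrak{m}$-adic completion to commute with the fixed-point functor, giving the completion at
\[
\mathfrak{m}' = (C_{2^n}\cdot t_1^{C_{2^n}}, \ldots, C_{2^n}\cdot t_{m-1}^{C_{2^n}}, C_{2^n}\cdot(u^{\alpha}-\gamma_n u^{\alpha})),
\]
and the localization at $C_{2^n}\cdot(u^{\alpha})^{-1}$ descends because $u$ (and hence $u^{\alpha}$) is a unit. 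In the special case $\alpha = q$, the defining relation $t_m^{C_{2^n}} = u^{2^m-1}$ from the map $f$ of \eqref{eq:mapf} immediately identifies $u^{\alpha}$ with $t_m^{C_{2^n}}$, yielding the second displayed form.

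The main obstacle I expect is the cohomological vanishing for $\Gal$: although $\Gal$ may have order a power of $2$, the key non-formal input is the normal basis theorem, which ensures that $W(k)$ is free over $\Z_2[\Gal]$ and thus that $N$ is cohomologically trivial. The rest of the argument is careful bookkeeping of the explicit group action on $\Rkm$, together with the (routine) verification that completion and localization commute with finite group invariants under the stated invariance conditions.
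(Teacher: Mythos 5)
Your proposal is correct and follows essentially the same route as the paper: the paper's proof is exactly that $E_2^{>0,*}=0$ in the $\Ckm$-homotopy fixed points spectral sequence because $k^{\times}[q]$ has odd order and $\Gal$ acts freely on $\pi_*E(k,\Gamma_h)$ (i.e.\ $W(k)$ is free over $\Z_2[\Gal]$, your normal basis step), after which the ring of invariants is read off from the explicit action. Your version merely makes the two-step reduction via the extension $1\to k^{\times}[q]\to\Ckm\to\Gal\to 1$ explicit, which the paper leaves implicit.
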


On the underlying homotopy groups, the map
$$\pi_*^eE(k, \Gamma_h) \longrightarrow \pi_*^eE(k, \Gamma_h)^{h\Ckm}$$
is not a ring map, but it is a $C_{2^n}$-equivariant map that sends $t_i^{C_{2^n}} \mapsto t_i^{C_{2^n}}$ for $1 \leq i \leq m-1$, and $u^{\alpha} \mapsto u^\alpha$.

For simplicity, for our next theorem we will choose our field $k$ so that $k^\times[q]$ has $(2^m-1)$-elements.

\begin{theorem}\label{thm:EquivOrientationCq}
There is a $C_{2^n}$-equivariant homotopy commutative ring map
$$\MUCn \longrightarrow E(k, \Gamma_h)^{h\Ckm}.$$
This map factors through a homotopy commutative ring map
$$\psi: \BPCn \longrightarrow E(k, \Gamma_h)^{h\Ckm}$$
such that the map $\pi_*^e \psi$ is the map $\Rn \longrightarrow \pi_*^e E(k, \Gamma_h)^{h\Ckm}$ determined by
\begin{align*}
t_i^{C_{2^n}} &\longmapsto \begin{cases}t_i^{C_{2^n}} & 1\leq i \leq m, \\
0 & i>m.
\end{cases}
\end{align*}
\end{theorem}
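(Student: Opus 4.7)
The proof follows the blueprint of Theorem~\ref{thm:IntroEquivOrientation} with $E(k,\Gamma_h)^{h\Ckm}$ in place of $E(k,\Gamma_h)$. The plan is to first produce a $C_2$-equivariant Real orientation $\MUR \to i^*_{C_2} E(k,\Gamma_h)^{h\Ckm}$, then apply the norm functor $N_{C_2}^{C_{2^n}}$ together with the counit of the norm-forget adjunction to obtain a $C_{2^n}$-equivariant homotopy commutative ring map $\MUCn \to E(k,\Gamma_h)^{h\Ckm}$, and finally factor this map through $\BPCn$ using $2$-typicality of the underlying formal group law.

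The Real orientation is obtained by composing the Hahn--Shi Real orientation $\MUR \to i^*_{C_2} E(k,\Gamma_h)$ with $i^*_{C_2}\sigma$, where $\sigma \colon E(k,\Gamma_h) \to E(k,\Gamma_h)^{h\Ckm}$ is the $\Gkm$-equivariant splitting produced by Theorem~\ref{thm:thm4.8}. The Blumberg--Hill argument invoked in the proof of Theorem~\ref{thm:IntroEquivOrientation} equips $E(k,\Gamma_h)^{h\Ckm}$, viewed through its cofree localization, with the structure of a genuine $C_{2^n}$-equivariant commutative ring spectrum, so the norm-counit construction applies verbatim and produces the desired $C_{2^n}$-equivariant map on the norm side.

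The main technical hurdle is that $\sigma$ is built from averaging idempotents and fails to be multiplicative in general, so one does not immediately see that the resulting composite is a homotopy commutative ring map. The cleanest way around this is to use the universal property of homotopy fixed points: a $C_{2^n}$-equivariant homotopy commutative ring map $\BPCn \to E(k,\Gamma_h)^{h\Ckm}$ is the same data as a $\Gkm$-equivariant homotopy commutative ring map $\BPCn \to E(k,\Gamma_h)$ with $\Ckm$ acting trivially on the source. The map $\phi$ of Theorem~\ref{thm:IntroEquivOrientation} has image in the $\Ckm$-fixed subring of $\pi_*^e E(k,\Gamma_h)$, by inspection of the $\Ckm$-action in Theorem~\ref{thm:model}. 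Since $|\Ckm|$ is coprime to $2$, the Goerss--Hopkins obstruction theory that underlies the $\mathbb{G}(k,\Gamma_h)$-action has vanishing obstructions for upgrading $\phi$ to a $\Ckm$-invariant map, which is precisely the sought-after $\Gkm$-enhancement.

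Once $\psi$ is in hand, the formula for $\pi_*^e\psi$ is read off from the formula for $\pi_*^e\phi$ in Theorem~\ref{thm:IntroEquivOrientation}: since $\psi$ factors $\phi$ through the inclusion $(\pi_*^e E(k,\Gamma_h))^{\Ckm} \hookrightarrow \pi_*^e E(k,\Gamma_h)$, Proposition~\ref{prop:CqFixedPoint} allows us to interpret $t_m^{C_{2^n}} = u^{2^m-1}$ as an element of $\pi_*^e E(k,\Gamma_h)^{h\Ckm}$ and recover the stated formula.
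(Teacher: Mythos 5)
Your overall architecture (Real orientation of the $C_2$-restriction, then norm and the norm--forget counit, then $2$-typicality) matches the paper's, and you correctly identify the central difficulty: the splitting $E(k,\Gamma_h)\to E(k,\Gamma_h)^{h\Ckm}$ of Theorem~\ref{thm:thm4.8} is not multiplicative, so composing it with the Hahn--Shi orientation does not produce a ring map. But your proposed workaround has a genuine gap. First, the obstruction-theoretic lifting step is not available as stated: the map $\phi$ of Theorem~\ref{thm:IntroEquivOrientation} is only a \emph{homotopy} commutative ring map (it is built by norming a Real orientation), not an $E_\infty$-map, so the Goerss--Hopkins obstruction theory controlling $E_\infty$-mapping spaces into $E(k,\Gamma_h)$ does not apply to it. Even setting that aside, knowing that $\pi_*^e\phi$ lands in the $\Ckm$-fixed subring only tells you that the homotopy class of $\phi$ is fixed in $\pi_0$ of the relevant mapping space; producing a map into $E(k,\Gamma_h)^{h\Ckm}$ requires exhibiting $\phi$ as a point of the \emph{homotopy fixed points} of that mapping space, i.e.\ coherent fixedness, and you have not supplied the input needed to run such a descent (nor its compatibility with the genuine $C_{2^n}$-structure). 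Finally, your coprimality claim is false in general: $\Ckm=\Gal(k/\F_2)\ltimes k^\times[q]$, and $\Gal(k/\F_2)$ can have even order; the vanishing of higher cohomology used in the paper comes from the freeness of the Galois action on $\pi_*E(k,\Gamma_h)$ together with the oddness of $|k^\times[q]|$, not from $|\Ckm|$ being odd.

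The paper circumvents all of this by proving that $i_{C_2}^*E(k,\Gamma_h)^{h\Ckm}$ is Real orientable \emph{directly}, so that no lifting of $\phi$ is needed. Concretely, one uses that the (non-multiplicative) splitting and the (multiplicative) inclusion $E(k,\Gamma_h)^{h\Ckm}\to E(k,\Gamma_h)$ exhibit the $C_2$-homotopy fixed point spectral sequence of $E(k,\Gamma_h)^{h\Ckm}$ as a retract of that of $E(k,\Gamma_h)$; since Hahn--Shi determined all differentials in the latter, naturality forces the differentials in the former, and one reads off $\pi_{k\rho_2-1}^{C_2}\bigl(E(k,\Gamma_h)^{h\Ckm}\bigr)=0$ for all $k$. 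By the Real orientability criterion of Hill--Meier this yields a homotopy commutative ring map $\MUR\to i_{C_2}^*E(k,\Gamma_h)^{h\Ckm}$ factoring through $\BPR$, after which the norm construction and the identification of $\pi_*^e\psi$ proceed exactly as in Theorem~\ref{thm:IntroEquivOrientation}. If you want to salvage your approach, you would need to replace the obstruction-theory paragraph with an argument of this kind, or else genuinely establish $\phi$ as a coherent $\Ckm$-fixed point of the mapping space.
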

\begin{proof}
Consider the splitting map
$${E(k, \Gamma_h) \longrightarrow E(k,\Gamma_h)^{h\Ckm}}$$
in Theorem~\ref{thm:maintop}.  Although this map is not a ring map, it is still a $C_{2^n}$-equivariant map and hence induces a map
\begin{equation}\label{eq:EtoCq}
C_2\text{-}\HFPSS(E(k, \Gamma_h)) \longrightarrow C_2\text{-}\HFPSS(E(k, \Gamma_h)^{h\Ckm})
\end{equation}
of $C_2$-equivariant homotopy fixed points spectral sequences (HFPSS).

On the other hand, we also have the map
$$E(k, \Gamma_h)^{h\Ckm} \longrightarrow E(k, \Gamma_h),$$
which is a map of commutative $C_{2^n}$-spectra.  This map induces a map
\begin{equation}\label{eq:CqtoE}
C_2\text{-}\HFPSS(E(k, \Gamma_h)^{h\Ckm}) \longrightarrow C_2\text{-}\HFPSS(E(k, \Gamma_h)).
\end{equation}
The composition map of spectral sequences
$$C_2\text{-}\HFPSS(E(k, \Gamma_h)^{h\Ckm}) \longrightarrow C_2\text{-}\HFPSS(E(k, \Gamma_h)) \longrightarrow C_2\text{-}\HFPSS(E(k, \Gamma_h)^{h\Ckm})$$
is the identity map.

By Theorem~\ref{thm:model}, and \cite[Theorem~4.7]{hill_meier}, the $E_2$-page of the $RO(C_2)$-graded $C_2$-homotopy fixed points spectral sequence for $E(k, \Gamma_h)$ is
$$W(k) [C_{2^n} \cdot \bar{t}_1^{C_{2^n}}, \ldots, C_{2^n} \cdot \bar{t}_{m-1}^{C_{2^n}}, C_{2^{n}} \cdot \bar{u} ] [C_{2^{n}}\cdot \bar{u}^{-1} ]^{\wedge}_{\bar{\mathfrak{m}}}\otimes \mathbb{Z}[u_{2\sigma}^\pm, a_\sigma]/(2a_\sigma).$$
By Proposition~\ref{prop:CqFixedPoint}, the $E_2$-page of the $RO(C_2)$-graded $C_2$-homotopy fixed points spectral sequence for $E(k, \Gamma_h)^{h\Ckm}$ is
$$\mathbb{Z}_2 [C_{2^n} \cdot \bar{t}_1^{C_{2^n}}, \ldots, C_{2^n} \cdot \bar{t}_{m-1}^{C_{2^n}}, C_{2^n}\cdot \bar{u}^{\alpha}][C_{2^m} \cdot (\bar{u}^{\alpha})^{-1}]^{\wedge}_{\bar{\mathfrak{m}}'}\otimes \mathbb{Z}[u_{2\sigma}^\pm, a_\sigma]/(2a_\sigma).$$

The map~\eqref{eq:CqtoE} induces an injection on the $E_2$-page.  Hahn--Shi \cite[Theorem~1.2]{hahnshi} have completely computed all the differentials in $C_2\text{-}\HFPSS(E(k, \Gamma_h))$.  By natuality of the maps \eqref{eq:EtoCq} and \eqref{eq:CqtoE}, we deduce that the map
$$C_2\text{-}\HFPSS(E(k, \Gamma_h)^{h\Ckm}) \longrightarrow C_2\text{-}\HFPSS(E(k, \Gamma_h))$$
also induces injections on the set of differentials on each page.  More specifically, for any nonzero differential $d_r(x) = y$ in $C_2\text{-}\HFPSS(E(k, \Gamma_h)^{h\Ckm})$, its image in $C_2\text{-}\HFPSS(E(k, \Gamma_h))$ is also the nonzero differential $d_r(x) = y$.

As a consequence, we deduce that
$$\pi_{k\rho_2-1}^{C_2}(E(k, \Gamma_h)^{h\Ckm}) = 0$$
for all $k \in \mathbb{Z}$ (this is because $\pi_{k\rho_2-1}^{C_2}(E(k, \Gamma_h)) = 0$ for all $k \in \mathbb{Z}$).  By \cite[Lemma~3.3]{hill_meier}, the spectrum $i_{C_2}^*E(k, \Gamma_h)^{h\Ckm}$ is Real orientable, and we obtain a homotopy commutative ring map
$$\MUR \longrightarrow i_{C_2}^*E(k, \Gamma_h)^{h\Ckm} $$
that factors through $\BPR$.

Since $E(k, \Gamma_h)^{h\Ckm}$ is a $C_{2^n}$-equivariant commutative ring, applying the norm functor $N_{C_2}^{C_{2^n}}(-)$ to
$$\MUR \longrightarrow \BPR \longrightarrow i_{C_2}^*E(k, \Gamma_h)^{h\Ckm} $$
and using the norm-forget adjunction produces the homotopy commutative ring maps
$$\begin{tikzcd}
\MUCn \ar[r] & \BPCn \ar[r, "\psi"] &E(k, \Gamma_h)^{h\Ckm}.
\end{tikzcd}$$

The map $\pi_*^e\psi$ is determined by the $C_{2^n}$-action on the formal group law over $\pi_*^eE(k, \Gamma_h)^{h\Ckm}$ defined via the map
$$BP_* \longrightarrow \pi_*^eE(k, \Gamma_h)^{h\Ckm}.$$
By construction, it is the map we claimed in the statement of the theorem.
\end{proof}

\section{Equivariant orientation and localization}\label{sec:localization}
In this section, we prove Theorem~\ref{thm:IntroInvertingD}.  Throughout this section we will denote the group $C_{2^n}$ by $G$ and the Lubin--Tate theory $E(k, \Gamma_h)$ by $E_h$. We let $\rho_G$ be the real regular representation of $G$ and we abbreviate $\rho_{2} = \rho_{C_2}$.
We need to specify an element $D \in \pi_{*\rho_G}^G \MUCn$ so that there are the desired factorizations as stated in the theorem and the following three properties hold:
\begin{enumerate}
\item The spectra $D^{-1} \MUG$, $D^{-1}\BPG$ are cofree.
\item The Hill--Hopkins--Ravenel periodicity theorem \cite[Theorem 9.19]{HHR} holds for $D^{-1} \MUG$ and $D^{-1}\BPG$.
\item In $\pi_*^e D^{-1}\BPG \langle m \rangle$, $I_{C_2} = I_{G}$, where
\begin{eqnarray*}
I_{C_2} &=& (2, t_1^{C_2}, \ldots, t_{2^{n-1}m-1}^{C_2}, 2 t_{2^{n-1}m}^{C_2}) = (2, v_1, \ldots, v_{2^{n-1}m-1})\\
I_G &=& (2, G \cdot t_1^{G}, \ldots, G \cdot t_{m-1}^G, G \cdot (t_m^G - \gamma_n t_m^G))
\end{eqnarray*}
are the ideals defined in the proof of Proposition~\ref{prop:m=Ih}.
\end{enumerate}

Before specifying the element $D$ so that properties (1)--(3) hold, we will first explain how to obtain the factorizations in Theorem~\ref{thm:IntroInvertingD} once we have chosen an arbitrary element $D \in \pi_\bigstar^G \MUG$ that becomes invertible in $\pi_\bigstar^G E_h$.

Given a homotopy commutative spectrum $R$, the spectrum $D^{-1}R$ is defined to be the homotopy colimit of the sequence
$$\begin{tikzcd}
R \ar[r, "D"] &S^{-V} \wedge R \ar[r, "D"] &S^{-2V} \wedge R \ar[r, "D"] &\cdots.
\end{tikzcd}$$
The $C_{2^n}$-equivariant orientation
$$\MUG \longrightarrow E_h$$
is a map of homotopy commutative ring spectra, and there is a commutative diagram
$$\begin{tikzcd}
\MUG \ar[r, "D"] \ar[d] & S^{-V} \wedge \MUG \ar[r, "D"] \ar[d] & S^{-2V} \wedge \MUG \ar[r, "D"] \ar[d] & \cdots \\
E_h \ar[r, "D"] & S^{-V} \wedge E_h \ar[r, "D"] & S^{-2V} \wedge E_h \ar[r, "D"] & \cdots
\end{tikzcd}$$
Passing to the colimit and using the fact that $D^{-1}E_h \simeq E_h$ produces the factorization map
$$D^{-1}\MUG \longrightarrow E_h.$$
This proves the first diagram.  The proof for factorization through $D^{-1}\BPG$ is exactly the same.

\begin{remark}\rm
As we will see, the element $D \in \pi_{*\rho_G}^G \MUG$ also becomes invertible in $\pi_\bigstar^G E_h^{h\Ckm}$ under the map
$$\pi_{\bigstar}^G \MUG \longrightarrow \pi_{\bigstar}^G E_h^{h\Ckm}.$$
It follows from Theorem~\ref{thm:IntroInvertingD} and the discussion above that there are factorizations
$$\begin{tikzcd}
\MUG \ar[d] \ar[r] &E_h^{h\Ckm} \\
D^{-1} \MUG \ar[ru, dashed]
\end{tikzcd} \hspace{0.3in} \begin{tikzcd}
\BPG \ar[d] \ar[r] &E_h^{h\Ckm} \\
D^{-1} \BPG \ar[ru, dashed]
\end{tikzcd}$$
for the $G$-equivariant orientations of $E_h^{h\Ckm}$ through $D^{-1} \MUG$ and $D^{-1}\BPG$.
\end{remark}

We will now specify the element $D \in \pi_{\bigstar}^G \MUG$ so that Theorem~\ref{thm:IntroInvertingD} holds. By \cite[Section 5]{HHR} and \cite[Theorem 6.7]{hahnshi}, the spectra $i_{C_2}^*\MUG$, $i_{C_2}^*\BPG$, $i_{C_2}^*\BPG\langle m\rangle$ and $i_{C_2}^*E_h$ are strongly even, which means in particular that the restriction maps
$$\pi_{*\rho_2}^{C_2}(-) \longrightarrow \pi_{*}^e(-)$$
from the $(*\rho_2)$-graded $C_2$-equivariant homotopy groups to the non-equivariant homotopy groups are isomorphisms.  Therefore, we have complete knowledge of the homotopy groups of $\pi_{*\rho_2}^{C_2} \MUG$, $\pi_{*\rho_2}^{C_2} \BPG$, $\pi_{*\rho_2}^{C_2} \BPG\langle m\rangle$, and $\pi_{*\rho_2}^{C_2} E_h$.  They are
\begin{eqnarray*}
\pi_{*\rho_2}^{C_2} \MUG &=& \mathbb{Z}[G \cdot \bar{r}_1^G, G \cdot \bar{r}_2^G, \ldots],\\
\pi_{*\rho_2}^{C_2} \BPG &=& \mathbb{Z}_{(2)}[G \cdot \bar{t}_1^G, G \cdot \bar{t}_2^G, \ldots],\\
\pi_{*\rho_2}^{C_2} \BPG\langle m\rangle &=& \mathbb{Z}_{(2)}[G \cdot \bar{t}_1^G, G \cdot \bar{t}_2^G, \ldots, G \cdot \bar{t}_m^G],\\
\pi_{*\rho_2}^{C_2} E_h &=& \mathbb{Z}_2 [G \cdot \bar{t}_1^{G}, \ldots, G \cdot \bar{t}_{m-1}^{G}, G \cdot \bar{u} ] [G\cdot \bar{u}^{-1} ]^{\wedge}_{\bar{\mathfrak{m}}}
\end{eqnarray*}
where
\begin{align*} \bar{\mathfrak{m}} &=(C_{2^n}\cdot \bar{t}_1^{C_{2^n}}, \ldots, C_{2^n}\cdot \bar{t}_{m-1}^{C_{2^n}}, C_{2^n}\cdot (\bar{u}-\gamma_n \bar{u}))\\
 &=(C_{2^n}\cdot \bar{t}_1^{C_{2^n}}, \ldots, C_{2^n}\cdot \bar{t}_{m-1}^{C_{2^n}}, C_{2^n}\cdot (\bar{t}_m-\gamma_n \bar{t}_m)).
\end{align*}

The following proposition gives a criterion to identify elements in $\pi_{*\rho_G}^G \MUG$ that becomes invertible under the induced map
$$\pi_{*\rho_G}^G \MUG \longrightarrow \pi_{*\rho_G}^G E_h$$
of $G$-equivariant homotopy groups.  The same result holds for $\BPG$ as well.

\begin{proposition}\label{prop:NormingRho2}
If the element $x \in \pi_{*\rho_2}^{C_2} \MUG$ becomes invertible under the map
$$\pi_{*\rho_2}^{C_2} \MUG \longrightarrow \pi_{*\rho_2}^{C_2} E_h$$
of $C_2$-equivariant homotopy groups, then the element $N_{C_2}^{G}(x) \in \pi_{*\rho_{G}}^G \MUG$ also becomes invertible under the map
$$\pi_{*\rho_{G}}^{G} \MUG \longrightarrow \pi_{*\rho_{G}}^{G} E_h$$
of $G$-equivariant homotopy groups.
\end{proposition}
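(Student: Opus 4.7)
The plan is to reduce the statement to two independent facts: (i) naturality of the internal multiplicative norm $N_{C_2}^G$ along the orientation $\phi\colon MU^{(\!(G)\!)}\to E_h$, and (ii) multiplicativity of $N_{C_2}^G$ on equivariant homotopy. Both hold because $MU^{(\!(G)\!)}$ is a genuine $G$-equivariant commutative ring spectrum and $E_h$ has been promoted (via the Blumberg--Hill result recalled in Section~\ref{sec:classLT}) to a cofree genuine $G$-equivariant commutative ring spectrum receiving a $G$-equivariant homotopy commutative ring map from $MU^{(\!(G)\!)}$. In particular, the norm
\[
N_{C_2}^G\colon \pi_{k\rho_2}^{C_2}R\longrightarrow \pi_{k\rho_G}^G R
\]
is defined for $R\in \{MU^{(\!(G)\!)},E_h\}$ and satisfies $N(1)=1$ and $N(ab)=N(a)N(b)$.

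First I would invoke naturality to observe that, for the class $x\in\pi_{*\rho_2}^{C_2}MU^{(\!(G)\!)}$,
\[
\phi_{*}\bigl(N_{C_2}^G(x)\bigr)\;=\;N_{C_2}^G\bigl(\phi_{*}(x)\bigr)
\]
in $\pi_{*\rho_G}^G E_h$. This is the only place where one must be a touch careful: the equality is between classes on homotopy groups, and a $G$-equivariant homotopy commutative ring map is enough to ensure it, because $N_{C_2}^G$ on homotopy is built from the smash power functor $N_{C_2}^G(-)$, which only uses the commutative ring structure of the target and the restriction of $\phi$ to $i_{C_2}^{\ast}$.

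Next, using the hypothesis, set $y:=\phi_{*}(x)\in\pi_{*\rho_2}^{C_2}E_h$ and let $y^{-1}\in\pi_{-*\rho_2}^{C_2}E_h$ be its inverse. By multiplicativity of the norm on homotopy,
\[
N_{C_2}^G(y)\cdot N_{C_2}^G(y^{-1})\;=\;N_{C_2}^G(y\cdot y^{-1})\;=\;N_{C_2}^G(1)\;=\;1,
\]
so $N_{C_2}^G(y)$ is a unit in $\pi_{\bigstar}^G E_h$ with inverse $N_{C_2}^G(y^{-1})$. Combining with the previous paragraph, $\phi_{*}(N_{C_2}^G(x))=N_{C_2}^G(y)$ is invertible, which is the conclusion.

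The only real obstacle is the naturality statement $\phi_{*}\circ N_{C_2}^G = N_{C_2}^G\circ \phi_{*}$ for a map that is only homotopy commutative; the multiplicativity of $N_{C_2}^G$ on homotopy classes of commutative ring spectra is standard from \cite{HHR}. Since $\phi$ factors through the unit $MU^{(\!(G)\!)}\simeq N_{C_2}^G MU_{\mathbb R}\to E_h$ of the norm-forget adjunction applied to a $C_2$-equivariant ring map $MU_{\mathbb R}\to i_{C_2}^{\ast}E_h$ (this is exactly how $\phi$ is built in the proof of Theorem~\ref{thm:IntroEquivOrientation}), the norm classes $N_{C_2}^G(x)$ in $\pi_{\bigstar}^G MU^{(\!(G)\!)}$ are tautologically sent to the corresponding norm classes in $\pi_{\bigstar}^G E_h$, and this yields the required naturality without any further coherence input.
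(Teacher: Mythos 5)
Your argument takes a genuinely different route from the paper's, and it is worth separating its two halves. The half that works cleanly is the multiplicativity step: once one knows that the image of $N_{C_2}^{G}(x)$ in $\pi_{*\rho_G}^{G}E_h$ is $N_{C_2}^{G}(y)$ for $y=\phi_*(x)$, the identity $N_{C_2}^{G}(y)\cdot N_{C_2}^{G}(y^{-1})=N_{C_2}^{G}(1)=1$ gives invertibility; this is exactly what the paper leaves implicit when it asserts that $N_{C_2}^G(y)$ "is invertible." The difference lies in how the identity $\phi_*(N_{C_2}^{G}(x))=N_{C_2}^{G}(y)$ is established. The paper does \emph{not} invoke naturality of the internal norm along $\phi$; instead it maps the $C_2$- and $G$-graded slice spectral sequences into the corresponding homotopy fixed point spectral sequences, observes that the map of $E_2$-pages of the $G$-HFPSS is determined by the honest $G$-equivariant ring map $\pi_*^e\phi$ (which visibly carries the product of conjugates representing $N_{C_2}^G(x')$ to the one representing $N_{C_2}^G(y)$), and then uses permanence of these cycles. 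This detour through spectral sequences exists precisely because $\phi$ is only a \emph{homotopy} commutative ring map.

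That is where your proposal has a gap: the naturality $\phi_*\circ N_{C_2}^{G}=N_{C_2}^{G}\circ\phi_*$ is standard for maps of genuine $G$-commutative ($G$-$E_\infty$) rings, but not for a map that is merely homotopy commutative, and your claim that the factorization $\MUG=N_{C_2}^{G}\MUR\to N_{C_2}^{G}i_{C_2}^*E_h\to E_h$ makes it "tautological, without any further coherence input" is an overstatement. The argument really is tautological only when $x$ is pulled back from $\pi_{*\rho_2}^{C_2}\MUR$, since then $N_{C_2}^G(x)$ is an external smash power and functoriality of $N_{C_2}^G(-)$ on spectra suffices. For a general $x\in\pi_{*\rho_2}^{C_2}\MUG$ --- and the classes actually normed in the proof of Theorem~\ref{thm:IntroInvertingD}, such as $(\bar t_k^{C_{2^r}})^{2^{k-1}}+\gamma_r\bar t_{k}^{C_{2^r}}(\bar t_{k-1}^{C_{2^r}})^{2^k-1}$, involve Weyl conjugates and are not of this form --- one must check that the counit square
\[
\begin{tikzcd}
N_{C_2}^G i_{C_2}^* \MUG \ar[r] \ar[d] & N_{C_2}^G i_{C_2}^* E_h \ar[d] \\
\MUG \ar[r, "\phi"] & E_h
\end{tikzcd}
\]
commutes up to homotopy, i.e., that the norm of a homotopy ring map is compatible with the iterated indexed multiplications defining the counits. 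That can be done, but it is exactly the coherence input you declare unnecessary; the paper's comparison of spectral sequences is the device that avoids having to supply it.
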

\begin{proof}
Let the image of $x$ under the map
$$\pi_{*\rho_2}^{C_2} \MUG \longrightarrow \pi_{*\rho_2}^{C_2} E_h$$
be $y$.  We will prove that the image of $N_{C_2}^G(x)$ under the map
$$\pi_{*\rho_{G}}^{G} \MUG \longrightarrow \pi_{*\rho_{G}}^{G} E_h$$
is $N_{C_2}^G(y)$, which is invertible.

We will denote the slice spectral sequence and the homotopy fixed points spectral sequence by $\SliceSS(-)$ and $\HFPSS(-)$, respectively.  Consider the maps
$$C_2\text{-}\SliceSS(\MUG) \longrightarrow C_2\text{-}\HFPSS(\MUG) \longrightarrow C_2\text{-}\HFPSS(E_h)$$
of $RO(C_2)$-graded spectral sequences.  The element $x$ is represented by a class on the $E_2$-page of $C_2\text{-}\SliceSS(\MUG)$, which, by an abuse of notation, will also be denoted by $x$.  On the $E_2$-page, the maps of spectral sequences above send
$$x \longmapsto x' \longmapsto y,$$
where $x'$ and $y$ are classes on the $E_2$-page of $C_2\text{-}\HFPSS(\MUG)$ and the $E_2$-page of $C_2\text{-}\HFPSS(E_h)$, respectively.  Since $x$ is a permanent cycle, $x'$ and $y$ are also permanent cycles.  The element $y$ survives to become the element which, again, we also call $y \in \pi_{*\rho_2}^{C_2} E_h$ in homotopy.

Now, consider the maps
$$G\text{-}\SliceSS(\MUG) \longrightarrow G\text{-}\HFPSS(\MUG) \longrightarrow G\text{-}\HFPSS(E_h)$$
of $RO(G)$-graded spectral sequences.  On the $E_2$-page, the class $N_{C_2}^G (x)$ is first mapped to $N_{C_2}^G(x')$, and then mapped to $N_{C_2}^G(y)$.  This is because the second map on the $E_2$-page is completely determined by the $G$-equivariant map
$$\pi_*^e\MUG \longrightarrow \pi_*^eE_h.$$
The classes $N_{C_2}^G (x)$, $N_{C_2}^G (x')$, and $N_{C_2}^G (y)$ are permanent cycles, and they all survive to the $E_\infty$-page.  It follows that as elements in the $G$-equivariant homotopy groups $\pi_{*\rho_G}^G(-)$,
$$N_{C_2}^G (x) \longmapsto N_{C_2}^G (y)$$
under the map
\[\pi_{*\rho_{G}}^{G} \MUG \longrightarrow \pi_{*\rho_{G}}^{G} E_h.\qedhere\]
\end{proof}

We will now specify the element $D \in \pi_{\bigstar}^G \MUG$ so that properties (1)--(3) hold.  Our method is as follows: first, we will identify elements $x \in \pi_{*\rho_2}^{C_2} \BPG$ that become invertible under the map
$$\pi_{*\rho_2}^{C_2} \BPG \longrightarrow \pi_{*\rho_2}^{C_2}E_h.$$
By Proposition~\ref{prop:NormingRho2}, the elements $N_{C_2}^{G}(x) \in \pi_{*\rho_G}^G \BPG$ (formed by considering $x$ as elements in $\pi_{*\rho_2}^{C_2} \MUG$ under the map $\pi_{*\rho_2}^{C_2}\BPG \longrightarrow \pi_{*\rho_2}^{C_2}\MUG$) will also become invertible in $\pi_{*\rho_G}^G E_h$ under the map
$$\pi_{*\rho_G}^{G} \BPG \longrightarrow \pi_{*\rho_G}^{G}E_h.$$
We will define
$$D := \prod N_{C_2}^G(x)$$
to be the product of the elements $N_{C_2}^G(x)$.

\begin{proposition}\label{prop:invertHHRelements}
The images of the elements $\bar{t}_{2^{n-i}m}^{C_{2^i}} \in \pi_{*\rho_2}^{C_2} \BPG$ for $1 \leq i \leq n$ are invertible in $\pi_{*\rho_2}^{C_2} E_h$.
\end{proposition}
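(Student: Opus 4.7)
The plan is to reduce this $C_{2}$-equivariant invertibility statement to the purely algebraic invertibility statement that was already established in Proposition~\ref{prop:invertHHRelementspreprop}. The bridge is the strongly even property: by \cite{HHR,hahnshi}, both $i_{C_2}^{\ast}\BPG$ and $i_{C_2}^{\ast}E_h$ are strongly even, so the restriction maps
\[
\pi_{\ast\rho_2}^{C_2}\BPG \xrightarrow{\ \cong\ } \pi_{2\ast}^{e}\BPG,\qquad
\pi_{\ast\rho_2}^{C_2}E_h \xrightarrow{\ \cong\ } \pi_{2\ast}^{e}E_h
\]
are ring isomorphisms onto the full underlying homotopy (which is concentrated in even degrees on both sides). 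Under these isomorphisms, the equivariant class $\bar t_k^{C_{2^i}}$ corresponds to its underlying counterpart $t_k^{C_{2^i}}$ in $\Rn$, and the equivariant orientation $\BPG\to E_h$ induces, on underlying homotopy, the map $f\colon \Rn\to \Rkm$ defined in \eqref{eq:mapf}.

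With this translation in place, it suffices to check that for each $1\leq i\leq n$, the element $t_{2^{n-i}m}^{C_{2^i}}$ is sent to a unit under $f$. But this is exactly the conclusion of Proposition~\ref{prop:invertHHRelementspreprop}, which was proved earlier by downward induction on $i$: the base case $i=n$ is built into the definition of $\Rkm$ (where $t_m^{C_{2^n}} = u^{2^m-1}$ with $u$ invertible), and the inductive step uses the recursive formula of Theorem~\ref{theorem:Mform} together with the fact that the ``higher'' generators $C_{2^i}\cdot t_k^{C_{2^i}}$ for $2^{n-i}m < k \leq h$ vanish modulo $I_h \subseteq \mathfrak{m}$.

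Pulling back the inverse through the ring isomorphism then produces an inverse for $\bar t_{2^{n-i}m}^{C_{2^i}}$ in the ring $\pi_{\ast\rho_2}^{C_2}E_h$, which is precisely what we want. There is no serious obstacle: the only thing to verify beyond invoking the cited results is that restriction of the equivariant orientation $\BPG\to E_h$ to underlying $C_2$-spectra, followed by the strong-evenness identification, recovers the algebraic map $f$, and this is immediate from the construction in Theorem~\ref{thm:IntroEquivOrientation}. The content of this proposition is therefore essentially a repackaging of Proposition~\ref{prop:invertHHRelementspreprop}, and it is what is needed to feed Proposition~\ref{prop:NormingRho2} with invertible $C_2$-equivariant classes whose norms to $G$ will give the desired periodicity element $D$.
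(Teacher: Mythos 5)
Your proof is correct and follows the same route as the paper: the paper's proof of this proposition is literally ``immediate consequence of Proposition~\ref{prop:invertHHRelementspreprop},'' with the strong-evenness identification of $\pi_{*\rho_2}^{C_2}$ with underlying homotopy supplied in the surrounding discussion, exactly as you use it. You have simply spelled out the translation step (restriction isomorphisms plus compatibility of the equivariant orientation with the algebraic map $f$) that the paper leaves implicit.
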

\begin{proof}
This is an immediate consequence of Proposition~\ref{prop:invertHHRelementspreprop}.
\end{proof}

\begin{proof}[Proof of Theorem~\ref{thm:IntroInvertingD}.]
Proposition~\ref{prop:invertHHRelements} shows that we can include the product
\begin{align}\label{eq:defDini}
\prod_{i =1}^n N_{C_2}^G(\bar{t}_{2^{n-i}m}^{C_{2^i}}) = N_{C_2}^G(\bar{t}_{2^{n-1}m}^{C_2}) \cdot N_{C_2}^G(\bar{t}_{2^{n-2}m}^{C_4}) \cdots N_{C_2}^G(\bar{t}_{m}^{C_{2^n}})
\end{align}
into $D$.  By the arguments in \cite[Section 9]{HHR} and \cite[Section 10]{HHR}, inverting these elements will produce periodicity and homotopy fixed points theorems for the spectra $D^{-1} \MUG$, $D^{-1}\BPG$, and their quotients.  Therefore properties (1) and (2) hold.

Now, we will include elements into $D$ so that property (3) holds.  We will describe an iterative algorithm to accomplish this.  In the proof of Proposition~\ref{prop:m=Ih}, we used induction on $n$ to show that $I_{C_2} = I_{C_{2^n}}$ in $\pi_*E_h$.  For each step of the induction process, we defined intermediate ideals $J_i \subset \pi_*^e \BPG\langle m \rangle$ and used downward induction on $i$ to show that certain elements are in the images of ideals $J_i$ in $\pi_*E_h$.

In the argument of the downward induction, we identified certain elements in $\pi_*^e \BPG\langle m \rangle$ that become invertible in $\pi_*E_h$.  For instance, the elements
$${(t_k^{C_{2^r}})^{2^{k-1}}+ \gamma_r t_{k}^{C_{2^r}}(t_{k-1}^{C_{2^r}})^{2^k-1}}$$
and
$$(t_k^{C_{2^r}})^{2^i} +t_{i}^{C_{2^r}} \cdot x$$
are such elements (see the proof of Proposition~\ref{prop:m=Ih}).  Our algorithm is as follows: everytime we identify such an element $t \in \pi_*^e \BPG\langle m \rangle$, include $N_{C_2}^G (\bar{t})$ into the product defining $D$, where $\bar{t} \in \pi_{*\rho_2}^{C_2} \BPG\langle m \rangle$ is the (unique) $C_2$-equivariant lift of $t$.  For the two elements mentioned above, we will include
$$N_{C_2}^G((\bar{t}_k^{C_{2^r}})^{2^{k-1}}+ \gamma_r \bar{t}_{k}^{C_{2^r}}(\bar{t}_{k-1}^{C_{2^r}})^{2^k-1})$$
and
$$N_{C_2}^G((\bar{t}_k^{C_{2^r}})^{2^i} +\bar{t}_{i}^{C_{2^r}} \cdot \bar{x})$$
into the product.  Including all such elements to the product defining $D$ will guarantee that the proof for $I_{C_2} = I_{G}$ will carry through in $\pi_*^e D^{-1}\BPG\langle m \rangle$ as well.  This proves property (3).  \end{proof}

\section{The height of $BP^{(\!(C_{2^n})\!)}\langle m \rangle$}\label{sec:heightfilt}

In this section, we continue to let $G=C_{2^n}$ and $h=2^{n-1}m$. We now turn to analyze the height of the formal group law over $BP^{(\!(G)\!)}\langle m \rangle$.  We start by making a few remarks that will render this analysis easier. By Proposition~\ref{prop:tkvk}, we have
\[t_k^{C_2}\equiv v_k \pmod {I_k}\]
in $BP_*$.  In the equivalence above, the generators $v_k$ are the Araki generators. The generators ``$v_k$" are only well defined modulo $I_k$ and any choice of these generators will give the same chromatic story. So, instead of using classical choices of generators for $BP_*$ such as the Araki or Hazewinkel generators, we can use generators $t_k^{C_2}$ in our analysis of the heights of various $BP$-modules. For instance, since the Bousfield class of $E(h)$ is the same as that of $v_h^{-1}BP$ \cite[Theorem 7.3.2]{RavNil}, which is the same as that of $(t_h^{C_2})^{-1}BP$, we have
\[L_{h} X  = L_{(t_h^{C_2})^{-1}BP}X\]
for any spectrum $X$.

For this reason, from now on, we redefine
\begin{align}
\label{eq:redefinevk} v_k := t_k^{C_2} \in BP_*.
\end{align}
As usual, we let $I_r = (2,v_1, \ldots, v_{r-1})$.  As an immediate consequence of our work in the previous sections, we have the following result.  Note that in the introduction, we called
\[ \Rnm:= \pi_*^e i^*_e \BPG\langle m \rangle.\]
\begin{proposition}\label{prop:IhIGBP}
In $\pi_*^e D^{-1}\BPG\langle m \rangle$, $I_h = I_G$ and $\pi_*^e D^{-1}\BPG\langle m \rangle$ is a regular local ring with maximal ideal $I_h$ generated by the regular sequence $(2, v_1, \ldots, v_{h-1})$. Furthermore, $v_h$ maps to $t_m^\beta$ in
\[ \pi_*^e D^{-1}\BPG\langle m \rangle/I_h \cong  \F_2[t_m^{\pm 1}],\]
where $\beta=(2^h-1)/(2^m-1)$. In particular, modulo $I_h$, the formal group law over $\pi_{*}^{e}D^{-1}BP^{(\!(G)\!)}\langle m\rangle$ has height exactly $h$.
\end{proposition}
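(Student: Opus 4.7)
The plan is to derive the proposition from Theorem~\ref{thm:IntroInvertingD}(3), a short Krull dimension count, and a telescoping degree computation using the recursion in the proof of Proposition~\ref{prop:invertHHRelementspreprop}. First I would note that with the redefinition $v_k = t_k^{C_2}$ in \eqref{eq:redefinevk}, the ideal $I_{C_2}$ in Theorem~\ref{thm:IntroInvertingD}(3) becomes
\[ I_{C_2} = (2, t_1^{C_2}, \ldots, t_{h-1}^{C_2}, 2 t_h^{C_2}) = (2, v_1, \ldots, v_{h-1}) = I_h, \]
since $2 t_h^{C_2} \in (2)$. Hence Theorem~\ref{thm:IntroInvertingD}(3) gives $I_h = I_G$ inside $\pi_*^e D^{-1}\BPG\langle m\rangle$. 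To identify the quotient, observe that reducing modulo $I_G$ kills $2$ and all orbits $G\cdot t_i^G$ for $1\leq i\leq m-1$ and collapses the orbit $G\cdot t_m^G$ to the single class $t_m := t_m^G$, so $\Rnm/I_G \cong \F_2[t_m]$. The element $D$ was constructed to include the norm $N_{C_2}^G(\bar t_m^G)$, whose reduction modulo $I_G$ is a positive power of $t_m$; inverting $D$ thus inverts $t_m$ and yields $D^{-1}\Rnm/I_h \cong \F_2[t_m^{\pm 1}]$.

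The regularity statement follows from a dimension count. The ring $\Rnm$ is a graded polynomial algebra over $\Z_{(2)}$ on $h = m\cdot 2^{n-1}$ generators, so it has Krull dimension $h+1$, and localization at $D$ cannot increase this. Since $D^{-1}\Rnm/I_h \cong \F_2[t_m^{\pm 1}]$ has Krull dimension $1$, any prime minimal over $I_h$ has height at least $h$, while Krull's height theorem applied to an $h$-generator set forces height at most $h$. Therefore $I_h$ has height exactly $h$, the generating set $(2, v_1, \ldots, v_{h-1})$ is automatically a regular sequence, and the localization $(D^{-1}\Rnm)_{I_h}$ is a regular local ring with maximal ideal $I_h$.

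For the final claims I would trace the recursion of Proposition~\ref{prop:invertHHRelementspreprop} modulo $I_h$. Setting $a_i := t_{2^{n-i}m}^{C_{2^i}}$ and using $I_h = I_G$ to absorb the $\gamma_i$-action, the formula proven there simplifies to $a_{i-1} \equiv a_i^{1+2^{2^{n-i}m}} \pmod{I_h}$, starting from $a_n \equiv t_m$. Telescoping,
\[ v_h = a_1 \equiv t_m^{\beta}, \qquad \beta = \prod_{j=0}^{n-2}\bigl(1+2^{2^j m}\bigr) = \frac{2^h-1}{2^m-1}, \]
which is a unit in $\F_2[t_m^{\pm 1}]$. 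Since $[2]_{\Fc}(x) \equiv v_h x^{2^h} + \text{higher} \pmod{I_h}$, the height of $\Fc$ modulo $I_h$ is exactly $h$. The main obstacle I anticipate is verifying that the various auxiliary norm elements included in $D$ by the iterative construction of Theorem~\ref{thm:IntroInvertingD} do not vanish modulo $I_G$; each such element must be inspected to confirm that, after reduction, it becomes a unit in $\F_2[t_m]$ so that the quotient calculation above is unaffected.
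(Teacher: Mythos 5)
Your proposal is correct and follows essentially the same route as the paper: it quotes Theorem~\ref{thm:IntroInvertingD}(3) for $I_h=I_G$, uses a Krull dimension count (as in the proof of Theorem~\ref{thm:mainalg}) for the regular sequence, and identifies $v_h$ with $t_m^{\beta}$ in the quotient $\F_2[t_m^{\pm 1}]$. The only cosmetic difference is that you compute $\beta$ by telescoping the recursion of Proposition~\ref{prop:invertHHRelementspreprop}, whereas the paper observes that $v_h$ is a factor of $i_e^*D$, hence a unit, and then pins down the exponent purely by degree reasons.
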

\begin{proof}
In the proof of Theorem~\ref{thm:IntroInvertingD}, we show that $I_h = I_G$. Now, as in the proof of Theorem~\ref{thm:mainalg}, we get that $(2, v_1, \ldots, v_{h-1})$ is a regular sequence by analyzing the Krull dimension of $\pi_*^e D^{-1}\BPG\langle m \rangle$.
Finally, $v_h$ (which is $t_{h}^{C_2}$) is a factor in $i^*_eD$ (see Equation~\ref{eq:defDini}). Therefore, $v_h$ is a unit and so maps to a unit in $\pi_*^e D^{-1}\BPG \langle m \rangle /I_h$.  The identification of $ \pi_*^e D^{-1}\BPG\langle m \rangle/I_h$ is straightforward by using the fact that $I_G=I_h$.  It follows by degree reasons that $v_h$ maps to $t_m^\beta$.
\end{proof}

Note that, as we have mentioned in the proof above, the element $v_h$ is invertible in $\pi_* i^*_e D^{-1}BP^{(\!(G)\!)}$ because it is a factor in $i^*_eD$.  Therefore $i^*_e D^{-1}BP^{(\!(G)\!)}$ is a $v_h^{-1}BP$-module and so is $E(h)$-local.

\begin{lemma}\label{lem:heightBPbracketsm}
For all $0 \leq k \leq n-1$ and $r > 2^k m$, under the composite map
\[\pi_*^e BP^{(\!(C_{2^{n-k}} )\!)} \longrightarrow \pi_*^e \BPCn \longrightarrow \pi_*^e \BPCn \langle m \rangle,
\]
the images of $t_r^{C_{2^{n-k}}}$ and its conjugates by $C_{2^{n-k}}$ are contained in the ideal
${I_r = (2, v_1, \ldots, v_{r-1})}$.
\end{lemma}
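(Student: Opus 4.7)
The plan is to prove the lemma by induction on $k$, running from $k=0$ up to $k=n-1$. The base case $k=0$ is immediate: for $r > m$, the element $t_r^{C_{2^n}}$ is zero in $\pi_*^e BP^{(\!(C_{2^n})\!)}\langle m\rangle$ by the very definition of the twisted monoid quotient, so in particular $t_r^{C_{2^n}} \in I_r$, and the same holds for its conjugates. The inductive step will feed Theorem~\ref{theorem:Mform} (applied with $n$ replaced by $n-k$, so that $C_{2^{n-1}}$ is replaced by $C_{2^{n-k-1}}$, and $k$ in the theorem replaced by $r$) into itself, layer by layer, going from the largest cyclic group $C_{2^n}$ down to $C_2$.

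For the inductive step, assume the result for $k$; we must establish it for $k+1$. Let $r > 2^{k+1}m$. Theorem~\ref{theorem:Mform} gives
\[
t_r^{C_{2^{n-k-1}}} \;\equiv\; t_r^{C_{2^{n-k}}} + \gamma_{n-k}\, t_r^{C_{2^{n-k}}} + \sum_{j=1}^{r-1}\gamma_{n-k}\, t_j^{C_{2^{n-k}}}\,(t_{r-j}^{C_{2^{n-k}}})^{2^j} \pmod{I_r}.
\]
Since $r > 2^{k+1}m > 2^k m$, the inductive hypothesis immediately handles the first two terms. For each summand, I observe that $r = j + (r-j) > 2^{k+1}m$ forces at least one of $j > 2^k m$ or $r-j > 2^k m$. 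In the first case the inductive hypothesis gives $\gamma_{n-k}t_j^{C_{2^{n-k}}} \in I_j \subseteq I_r$ (since $j \leq r$); in the second, it gives $t_{r-j}^{C_{2^{n-k}}} \in I_{r-j} \subseteq I_r$, whence $(t_{r-j}^{C_{2^{n-k}}})^{2^j}$ lies in $I_r$ as well. Either way the summand is in $I_r$, so $t_r^{C_{2^{n-k-1}}} \in I_r$.

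Finally, to handle the conjugates under $C_{2^{n-k-1}}$, I invoke Proposition~\ref{prop:danny2}, which asserts that $I_r$ is invariant under the full $C_{2^n}$-action, hence under any subgroup action. Applying $\gamma_{n-k-1}^s$ to the relation $t_r^{C_{2^{n-k-1}}} \in I_r$ gives $\gamma_{n-k-1}^s t_r^{C_{2^{n-k-1}}} \in I_r$ for each $s$, completing the inductive step. I do not anticipate a genuine obstacle here; the only point requiring care is the bookkeeping of the containments $I_{r-j} \subseteq I_r$ and the observation that the range $r > 2^{k+1}m$ is exactly what is needed so that the split $r = j + (r-j)$ cannot place both halves below the inductive threshold $2^k m$ simultaneously.
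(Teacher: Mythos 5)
Your proposal is correct and follows essentially the same route as the paper: induction on $k$, with the base case given by the definition of the quotient and the inductive step driven by Theorem~\ref{theorem:Mform}, noting that in each summand of the recursion at least one factor has index exceeding $2^k m$. The only cosmetic difference is that you handle the conjugates by invoking the $C_{2^n}$-invariance of $I_r$ from Proposition~\ref{prop:danny2}, whereas the paper simply reruns the argument on the conjugated elements; both are fine.
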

\begin{proof}
We will use induction on $k$.  The base case when $k = 0$ is immediate because for $r > m$, under the map
\[\pi_*^e \BPCn \longrightarrow  \pi_*^e \BPCn \langle m \rangle,\]
$t_r^{C_{2^n}}$ and its conjugates by $C_{2^n}$ are all sent to 0.

Now, suppose the claim is true for $k$.  By Theorem~\ref{theorem:Mform}, we have the following equality in $\pi_*^e \BPCn \langle m \rangle$ modulo $I_r$:
\[t_r^{C_{2^{n-k-1}}} \equiv t_r^{C_{2^{n-k}}} + \gamma_{n-k} t_r^{C_{2^{n-k}}} + \sum_{j = 1}^{r - 1} (t_{r-j}^{C_{2^{n-k}}})^{2^j} \gamma_{n-k} t_j^{C_{2^{n-k}}}.\]
For $r > 2^{k+1} m$, every summand on the right hand side contains some $t_i^{C_{2^{n-k}}}$ or its conjugate with $i \geq r/2 > 2^k m$.  By the induction hypothesis, these elements are all in the ideal $I_i \subseteq I_r  $.  It follows that $t_r^{C_{2^{n-k-1}}} = 0$ modulo $I_r$ for $r > 2^{k+1}m$.  The same proof applies to its conjugates.  This finishes the induction.
\end{proof}

\begin{proposition}\label{prop:heightBPbracketsm} Let $h=2^{n-1}m$.  Under the composite map
\[
\pi_* BP \longrightarrow \pi_*^e \BPG  \longrightarrow \pi_*^e \BPG \langle m \rangle,
\]
the images of the $v_i$ generators satisfy $v_r \in (2, v_1, \ldots, v_h)$ for $r >h$.
\end{proposition}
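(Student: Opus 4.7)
The plan is to deduce this proposition as a direct consequence of Lemma~\ref{lem:heightBPbracketsm} (applied with $k=n-1$), combined with a short induction on $r$. Taking $k = n-1$ in Lemma~\ref{lem:heightBPbracketsm} (so that $2^{k}m = h$) tells us that the image of $t_r^{C_2}$ in $\pi_*^e BP^{(\!(G)\!)}\langle m\rangle$ lies in $I_r = (2, v_1, \ldots, v_{r-1})$ for every $r > h$. Under the redefinition $v_r := t_r^{C_2}$ of \eqref{eq:redefinevk}, this reads $v_r \in (2, v_1, \ldots, v_{r-1})$ for $r > h$. The statement of the proposition is strictly stronger: we need to truncate the list of generators to $(2, v_1, \ldots, v_h)$, i.e., eliminate the contributions of $v_{h+1}, \ldots, v_{r-1}$.

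First I would establish the base case $r = h+1$, where the two ideals agree: $I_{h+1} = (2, v_1, \ldots, v_h)$, so the statement follows immediately from Lemma~\ref{lem:heightBPbracketsm}. Then I would proceed by strong induction on $r > h$. Assuming $v_i \in (2, v_1, \ldots, v_h)$ for every $h < i < r$, I apply the lemma to write
\[
v_r \;=\; 2 a_0 + v_1 a_1 + \cdots + v_h a_h + v_{h+1} a_{h+1} + \cdots + v_{r-1} a_{r-1}
\]
for some coefficients $a_i \in \pi_*^e BP^{(\!(G)\!)}\langle m\rangle$. Each of the terms $v_{h+1} a_{h+1}, \ldots, v_{r-1} a_{r-1}$ lies in $(2, v_1, \ldots, v_h)$ by the inductive hypothesis, so the entire right hand side lies in $(2, v_1, \ldots, v_h)$, completing the induction.

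There is no real obstacle here, since all the work has already been done in Lemma~\ref{lem:heightBPbracketsm} (which in turn relies on the recursive formula of Theorem~\ref{theorem:Mform}). The only thing to keep in mind is that the substitutions happen in the ring $\pi_*^e BP^{(\!(G)\!)}\langle m\rangle$, so the coefficients $a_i$ need not be $2$-typical or Galois invariant, only elements of this ring, which is exactly what the lemma provides. The argument is essentially the standard trick of promoting a nested containment $v_r \in I_r$ to a uniform containment $v_r \in I_{h+1}$ using downward substitution.
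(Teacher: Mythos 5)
Your proof is correct and follows essentially the same route as the paper: both apply Lemma~\ref{lem:heightBPbracketsm} with $k=n-1$ to get $v_r=t_r^{C_2}\in I_r$ for all $r>h$, and then collapse the chain of ideals down to $I_{h+1}=(2,v_1,\ldots,v_h)$ — the paper phrases this as the iterative identification $I_{r+1}=I_r=\cdots=I_{h+1}$, which is exactly your strong induction with explicit substitution.
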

\begin{proof}
Set $k = n-1$ in Lemma~\ref{lem:heightBPbracketsm}.  The result of the lemma implies that for all $r > h$, $v_r = t_r^{C_2}$ is contained in the ideal $I_r = (2, v_1, \ldots, v_{r-1})$.  In other words, $I_{r+1} = I_r$.  Applying the lemma iteratively shows that
\[I_{r+1} = I_r = \cdots = I_{h+1} = (2, v_1, \ldots, v_h).\]
It follows that $v_r \in (2, v_1, \ldots, v_h)$, as desired.
\end{proof}

\begin{proposition}\label{prop:computingKr} \hfill
\begin{enumerate}[(1)]
\item For $0 \leq r \leq h$,
\[\pi_*L_{K(r)} i^*_e D^{-1}\BPG \langle m \rangle \cong \left(v_r^{-1}\pi_*i^*_e  D^{-1} \BPG \langle m \rangle \right)^{\wedge}_{I_r}, \]
and $L_{K(r)} i^*_{e} D^{-1}BP^{(\!(G)\!)}\langle m\rangle \not\simeq \ast$.
\item For $r > h$, $L_{K(r)} i^*_{e} D^{-1}BP^{(\!(G)\!)}\langle m\rangle  \simeq \ast $.
\end{enumerate}
\end{proposition}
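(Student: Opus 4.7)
The plan is to first establish that the $BP_*$-module $M := \pi_*^e i^*_e D^{-1}BP^{(\!(G)\!)}\langle m\rangle$ is Landweber exact, and then deduce both parts from standard chromatic machinery together with the observation that $v_h$ is invertible in $M$.

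To check Landweber exactness, I verify that $(p, v_1, v_2, \ldots)$ is a regular sequence on $M$ by combining Propositions~\ref{prop:IhIGBP} and~\ref{prop:heightBPbracketsm}. For $1\le r \le h-1$, the element $v_r$ is a non-zero divisor on $M/I_r$ because $(2,v_1,\ldots,v_{h-1})$ is a regular sequence in the regular local ring $M$. For $r=h$, Proposition~\ref{prop:IhIGBP} identifies $v_h$ with the unit $t_m^{\beta}$ in $M/I_h\cong\mathbb{F}_2[t_m^{\pm 1}]$, so $v_h$ is already a unit in $M$ itself. Consequently $M/I_{h+1}=(M/I_h)/(v_h)=0$, and Proposition~\ref{prop:heightBPbracketsm} then gives $M/I_r=0$ for every $r>h$, so the regularity of $v_r$ modulo $I_r$ holds vacuously in that range.

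For part~(2), invertibility of $v_h$ in $M$ means $i^*_e D^{-1}BP^{(\!(G)\!)}\langle m\rangle$ is a module spectrum over $v_h^{-1}BP$. For $r>h$, the containment $v_h\in I_r$ gives
\[
K(r)_*(v_h^{-1}BP)=v_h^{-1}(BP_*/I_r)[v_r^{\pm 1}]=0,
\]
so $v_h^{-1}BP\wedge K(r)\simeq *$. Any module spectrum over a $K(r)$-acyclic ring spectrum is itself $K(r)$-acyclic, hence $L_{K(r)}i^*_e D^{-1}BP^{(\!(G)\!)}\langle m\rangle\simeq *$.

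For part~(1), I invoke the standard computation of the $K(r)$-localization of a Landweber exact $BP$-module spectrum (in the spirit of Hovey--Strickland): for Landweber exact $E$ with $v_r$ acting non-nilpotently on $\pi_*E/I_r$, the localization $L_{K(r)}E$ is the $I_r$-adic completion of $v_r^{-1}E$, with homotopy $(v_r^{-1}\pi_*E)^{\wedge}_{I_r}$. Applied to $E=i^*_e D^{-1}BP^{(\!(G)\!)}\langle m\rangle$, this yields the stated isomorphism. Non-vanishing then follows because, for $r<h$, $v_r$ is a non-zero divisor on the nonzero ring $M/I_r$, while for $r=h$ the element $v_h=t_m^{\beta}$ is already a unit in $M/I_h\ne 0$; in either case $K(r)_*M\ne 0$. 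The main technical hurdle is Step~(1): justifying that the $K(r)$-localization is computed by first inverting $v_r$ and then $I_r$-completing, when $M$ is not free over $BP_*$. This is the classical Landweber-exact localization package, but requires care to identify the completion with the correct derived/limit construction and to apply it in the presence of the exotic $BP$-module structure coming from the norm.
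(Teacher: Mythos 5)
Your proposal is correct, and for part (1) it travels the same road as the paper: both reduce to the Hovey--Strickland description of $K(r)$-localization together with the regularity of $(2,v_1,\ldots,v_{r-1})$ on $M$, which you extract from Propositions~\ref{prop:IhIGBP} and~\ref{prop:heightBPbracketsm} exactly as the paper does. The difference is packaging: where you cite the ``Landweber-exact localization package'' as a black box, the paper carries it out by hand --- writing $L_{K(r)}X\simeq \holim_i M_{J(i)}\wedge L_rX$ for a cofinal family of generalized Moore spectra, identifying $L_rX$ with $v_r^{-1}X$ (because $v_r^{-1}X$ is a $v_r^{-1}BP$-module, hence $E(r)$-local), computing $\pi_*(M_{J(i)}\wedge L_rX)$ from the regular sequence, and checking that the $\lim^1$ term vanishes. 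So the ``main technical hurdle'' you flag is precisely the content of the paper's proof; deferring to the general theorem is legitimate, but be aware that the identification $L_rX\simeq v_r^{-1}X$ and the Mittag--Leffler/$\lim^1$ analysis are part of what must be verified, and your non-vanishing argument then coincides with the paper's. For part (2) your route is genuinely different and cleaner: the paper again runs the Moore-spectrum tower and notes that $v_h^{j_h}$ acts as an equivalence at the $h$-th stage, whereas you observe that the spectrum is a module over the $K(r)$-acyclic ring $v_h^{-1}BP$. One small repair there: the displayed formula $K(r)_*(v_h^{-1}BP)=v_h^{-1}(BP_*/I_r)[v_r^{\pm1}]$ is not literally correct, since $K(r)_*BP\cong K(r)_*[t_1,t_2,\ldots]$ and $v_h$ acts through the right unit; the honest justification is that $I_r$ is an invariant ideal, so $\eta_R(v_h)\in I_r\cdot BP_*BP$ maps to zero in $K(r)_*BP$ for $r>h$ and inverting it kills everything (equivalently, $\langle v_h^{-1}BP\rangle=\langle E(h)\rangle$ and $K(r)\wedge E(h)\simeq\ast$ for $r>h$). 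With that adjustment your argument is complete.
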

\begin{proof}
In  \cite[Section 4]{HovStrick}, the authors produce a cofinal sequence $J(i) = (j_0, j_1, \ldots, j_{r-1})$ of positive integers and generalized Moore spectra
\[M_{J(i)} =S^0/(v_0^{j_0},  \ldots, v_{r-1}^{j_{r-1}})   \]
with maps $M_{J(i+1)} \to M_{J(i)}$ so that, for any spectrum $X$,
\[ L_{K(r)}X \simeq \mathrm{holim}_i M_{J(i)} \smsh L_r X.\]
This gives a $\lim^1$-sequence
\[ 0 \to {\lim_i}^{1} \pi_{*+1}(M_{J(i)} \smsh L_r X) \to \pi_*L_{K(r)}X \to \lim_i  \pi_{*}(M_{J(i)} \smsh L_r X) \to 0.\]

We apply this to $X=i^*_e D^{-1}BP^{(\!(G)\!)} \langle m \rangle $.  First, we show that $L_rX = v_r^{-1} X$.  Note that $v_r^{-1}X$ is $E(r)$-local because $L_rBP = v_r^{-1}BP$, and $v_r^{-1}X$  is a $v_r^{-1}BP$-module.  For any $E(r)$-local spectrum $Y$ and a map $X \to Y$, we get a map $v_r^{-1}BP \to Y$ from the composition map $BP \to X \to Y$.  This implies that the map $X \to Y$ factors through the map $X \to v_r^{-1}X$.  It follows from the universal property of $L_rX$ that $L_r X \simeq v_r^{-1}X$.

We can obtain $M_{J(i)} \smsh L_r X$ by a series of cofiber sequences
\[\Sigma^{j_{k}|v_k|}S^0/(v_0^{j_0},  \ldots, v_{k-1}^{j_{k-1}}) \smsh L_r  X \xrightarrow{v_{k}^{j_k}} S^0/(v_0^{j_0},  \ldots, v_{k-1}^{j_{k-1}}) \smsh L_r X \to S^0/(v_0^{j_0}, \ldots, v_{k}^{j_{k}}) \smsh L_r X  .\]
We start with the case when $r\leq h$. Since the sequence $(v_0, v_1, \ldots, v_{r-1})$ is regular in $\pi_* L_r X$, so is the sequence $(v_0^{j_0}, v_1^{j_1}, \ldots, v_{r-1}^{j_{r-1}})$. It follows that we get a series of exact sequences
\[ 0 \to \pi_*L_r X/(v_0^{j_0},  \ldots, v_{k-1}^{j_{k-1}})   \xrightarrow{v_{k}^{j_k}} \pi_*L_r X/(v_0^{j_0},  \ldots, v_{k-1}^{j_{k-1}})  \to \pi_*L_r X/(v_0^{j_0},  \ldots, v_{k}^{j_{k}})  \to 0, \]
which lead to an isomorphism
\[ \pi_{*}(M_{J(i)} \smsh L_r X) \cong (\pi_*L_r X )/(v_0^{j_0},  \ldots, v_{r-1}^{j_{r-1}})  . \]
 The maps in the inverse system $ \lim_i  \pi_{*}(M_{J(i)} \smsh L_r X)$ are then obviously surjective and so ${\lim_i}^{1} \pi_{*+1}(M_{J(i)} \smsh L_r X) =0$. The exact sequence above gives an isomorphism
\[  \pi_*L_{K(r)}X \xrightarrow{\cong}  \lim_i  \pi_{*}(M_{J(i)} \smsh L_r X) \cong (v_r^{-1}\pi_*X)^{\wedge}_{I_r}.\]

To show that this is not 0, note that since $(2, v_1, \ldots, v_r)$ is a regular sequence, $v_r \colon \pi_*X/I_r^k \to \pi_*X/I_r^k$ is injective.  Therefore, $\pi_*X/I_r^k$ (which is clearly non-zero) injects into  $v_r^{-1}\pi_*X/I_r^k$. It follows that $\lim_k \pi_*X/I_r^k $ injects into $\lim_k v_r^{-1}\pi_*X/I_r^k$, and so the latter is nontrival. This proves (1).

For (2), note that $v_h$ is in $D$ by definition.  In the series of cofiber sequences forming $M_{J(i)} \wedge L_rX$, the first map below induces an equivalence:
\[\Sigma^{j_{h}|v_h|}S^0/(v_0^{j_0},  \ldots, v_{h-1}^{j_{h-1}}) \smsh L_r  X \xrightarrow{v_{h}^{j_h}} S^0/(v_0^{j_0},  \ldots, v_{h-1}^{j_{h-1}}) \smsh L_r X \to S^0/(v_0^{j_0}, \ldots, v_{h}^{j_{h}}) \smsh L_r X.\]
This implies that $S^0/(v_0^{j_0}, \ldots, v_{h}^{j_{h}}) \smsh L_r X = 0$ and therefore $M_{J(i)} \wedge L_rX = 0$.  It follows that every term in the tower $\{M_{J(i)} \wedge L_r X\}$ is contractible and ${L_{K(r)} X \simeq \ast}$.
\end{proof}

\begin{theorem}\label{cor:computingKr} \hfill
\begin{enumerate}
\item For $0 \leq r \leq h$, $L_{K(r)} i_e^* \BPG \langle m \rangle \not \simeq \ast$.
\item For $r > h$, $L_{K(r)} i_e^* \BPG \langle m \rangle \simeq \ast$.
\end{enumerate}
\end{theorem}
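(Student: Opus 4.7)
The plan is to translate both parts into $K(r)$-homology statements using the standard equivalence $L_{K(r)}Y \simeq \ast$ if and only if $K(r) \wedge Y \simeq \ast$. Write $X := i_e^* BP^{(\!(G)\!)}\langle m\rangle$ and $\widetilde{X} := i_e^* D^{-1}BP^{(\!(G)\!)}\langle m\rangle$; Proposition~\ref{prop:computingKr} already handles $\widetilde{X}$, so the task is to transfer those conclusions to $X$. For part (1), the map $X \to \widetilde{X}$ is a localization obtained by inverting $D$, realized as a sequential homotopy colimit. Since smashing with $K(r)$ commutes with sequential colimits, $K(r)\wedge \widetilde{X} \simeq D^{-1}(K(r)\wedge X)$, and Proposition~\ref{prop:computingKr}(1) gives $K(r)\wedge \widetilde{X} \not\simeq \ast$ for $0 \leq r \leq h$, so \emph{a fortiori} $K(r)\wedge X \not\simeq \ast$.

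For part (2), I mimic the proof of Proposition~\ref{prop:computingKr}(2) applied to $X$ itself. Exactly as in that proof, $L_r X \simeq v_r^{-1}X$: the target is $E(r)$-local because it is a module over $v_r^{-1}BP \simeq L_r BP$, and $X \to v_r^{-1}X$ is an $E(r)$-equivalence because its cofiber is $v_r$-power torsion and $v_r$ is a unit in $E(r)_*$. The Hovey--Strickland formula then gives
\[
L_{K(r)} X \simeq \holim_i \, M_{J(i)} \wedge v_r^{-1}X,
\]
so it suffices to show that each $M_{J(i)} \wedge v_r^{-1}X \simeq \ast$. By iterating the cofiber sequences defining $M_{J(i)}$, each $v_k$ for $0 \leq k \leq r-1$ acts nilpotently on $\pi_*(M_{J(i)} \wedge v_r^{-1}X)$: from a cofiber $A \xrightarrow{v_k^{j_k}} A \to C$, the homotopy $\pi_*C$ is an extension of $\ker(v_k^{j_k})$ by $\mathrm{coker}(v_k^{j_k})$, both annihilated by $v_k^{j_k}$, so $v_k^{2 j_k} = 0$ on $\pi_* C$, and this nilpotence propagates through the tower. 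Since $r > h$, this nilpotence covers $v_0 = 2, v_1, \ldots, v_h$. Proposition~\ref{prop:heightBPbracketsm} expresses $v_r = 2 a_0 + v_1 a_1 + \cdots + v_h a_h$ in $\pi_* X$, so $v_r$ acts on $\pi_*(M_{J(i)} \wedge v_r^{-1}X)$ as a sum of commuting nilpotent endomorphisms, hence as a nilpotent endomorphism. At the same time $v_r$ acts invertibly through the $v_r^{-1}X$-module structure, and a simultaneously nilpotent and invertible endomorphism forces the module to be zero.

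The main obstacle is this nilpotence step in part (2): one must verify that iterated cofibers yield nilpotent $v_k$-actions even though the sequence $(v_0, v_1, \ldots, v_{r-1})$ is not regular on $\pi_*X$. The elementary extension observation above handles each stage, and the rest of the argument is routine bookkeeping.
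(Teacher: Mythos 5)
Your proof is correct, and it rests on the same two pillars as the paper's argument --- Proposition~\ref{prop:computingKr} for part (1) and Proposition~\ref{prop:heightBPbracketsm} for part (2) --- but the implementation of each step is genuinely different. For (1), the paper notes that $S^0 \to L_{K(r)} i_e^*\BPG\langle m\rangle \to L_{K(r)} i_e^* D^{-1}\BPG\langle m\rangle$ are ring maps, so contractibility of the middle term would force the nontrivial target to be the zero ring; your observation that $K(r)\smsh(-)$ commutes with the telescope defining $D^{-1}$ reaches the same conclusion without invoking any multiplicative structure, a mild gain in robustness. For (2), the paper rewrites $M_{J(i)}\smsh L_r X$ as $L_r X\smsh_{MU} MU/(v_0^{j_0},\ldots,v_{r-1}^{j_{r-1}})$ and runs the K\"unneth spectral sequence: $v_r^q$ vanishes on the $E_2$-page, so each homotopy class is killed by a power of $v_r$ depending on its filtration, and inverting $v_r$ annihilates everything. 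Your iterated-cofiber bookkeeping proves the stronger, uniform statement that a single power of $v_r$ kills all of $\pi_*(M_{J(i)}\smsh L_r X)$, and the ``nilpotent plus invertible implies zero'' punchline is then the same as the paper's. Two points to make explicit when writing this up: first, you need $L_r X\simeq v_r^{-1}X$ for the un-localized $X$, but the argument given for this in the proof of Proposition~\ref{prop:computingKr} never uses $D$, so it applies verbatim; second, both your cofiber towers and the paper's $\smsh_{MU}$ formulation implicitly identify the Moore-spectrum self-maps with multiplication by $v_k^{j_k}$ on the homotopy of a $BP$-module --- strictly, the self-map induces multiplication by something congruent to $v_k^{j_k}$ only modulo $I_k$, but since at that stage you have already arranged for $2, v_1,\ldots,v_{k-1}$ to act nilpotently, the discrepancy is itself nilpotent and your commuting-nilpotents argument absorbs it.
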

\begin{proof}
For (1), note that the maps
\[S^0 \longrightarrow L_{K(r)} i_e^* \BPG \langle m \rangle \longrightarrow L_{K(r)} i_e^* D^{-1} \BPG \langle m \rangle\]
are ring maps.  Since $L_{K(r)} i_e^* D^{-1} \BPG \langle m \rangle \not \simeq \ast$ by Proposition~\ref{prop:computingKr}, it follows that ${L_{K(r)} i_e^* \BPG \langle m \rangle \not \simeq \ast}$.

For (2), let $X = i_e^* \BPG\langle m \rangle$.  We will show that $ L_r X \wedge M_{J(i)} \simeq \ast$ for the generalized Moore spectra
\[M_{J(i)} = S^0/(v_0^{j_0}, \ldots, v_{r-1}^{j_{r-1}}).\]
This will imply every term in the tower $\{L_r X \wedge M_{J(i)} \}$ is contractible and ${L_{K(r)} X \simeq \ast}$.  Note that
$$L_r X \wedge M_{J(i)} = L_rX \wedge_{MU} MU/(v_0^{j_0}, \ldots, v_{r-1}^{j_{r-1}}). $$
There is a K\"unneth spectral sequence \cite[Theorem IV.4.1]{EKMM}
\[E_2^{s,t} = \Tor_{-s, t}^{MU_*}(\pi_*X, \pi_* MU/(v_0^{j_0}, \ldots, v_{r-1}^{j_{r-1}})) \Longrightarrow \pi_{t-s} \left(X \wedge_{MU} MU/(v_0^{j_0}, \ldots, v_{r-1}^{j_{r-1}})\right).\]
We have graded the spectral sequence cohomologically. As such, it is a lower half-plane spectral sequence.
Note that
\[ \pi_* MU/(v_0^{j_0}, \ldots, v_{r-1}^{j_{r-1}}) \cong MU_*/(v_0^{j_0}, \ldots, v_{r-1}^{j_{r-1}})\]
and the $E_2$-page
is a module over
\[\Tor_{0,*}^{MU_*}(\pi_* X, MU_*/(v_0^{j_0}, \ldots, v_{r-1}^{j_{r-1}})) = (\pi_*X) /(v_0^{j_0}, \ldots, v_{r-1}^{j_{r-1}}). \]

Since $v_r \in (v_0, \ldots, v_{r-1})$ by Proposition~\ref{prop:heightBPbracketsm}, $v_r^q \in (v_0^{j_0}, \ldots, v_{r-1}^{j_{r-1}})$ for ${q = \sum_{i=0}^{r-1} j_i}$.  This implies that $v_r^q$ is zero on the $E_2$-page.  Any element in the homotopy groups of $X \wedge_{MU} MU/(v_0^{j_0}, \ldots, v_{r-1}^{j_{r-1}})$ is represented by some element of filtration $s\leq 0$ on the $E_\infty$-page of the K\"unneth spectral sequence.  Since $v_r^q$ is zero on the $E_2$-page, this element must be annihilated by $v_r^{q(s+1)}$ in homotopy.  Therefore, every element in the homotopy groups of $X \wedge_{MU} MU/(v_0^{j_0}, \ldots, v_{r-1}^{j_{r-1}})$ is killed by some finite power of $v_r$.  It follows that
\[\pi_*(L_rX \wedge _{MU} MU/(v_0^{j_0}, \ldots, v_{r-1}^{j_{r-1}})) = v_r^{-1} \pi_* (X \wedge_{MU} MU/(v_0^{j_0}, \ldots, v_{r-1}^{j_{r-1}})) = 0.\qedhere\]
\end{proof}

\begin{proposition}
Let $q= 2^m -1$.  If $\F_{q} \subseteq k$ then the natural map $i^*_e BP^{(\!(G)\!)} \to i^*_e E(k, \Gamma_h)^{h\Ckm}$ of Theorem~\ref{thm:EquivOrientationCq} factors through an equivalence
\[ L_{K(h)} (i_e^* D^{-1}\BPG \langle m \rangle) \xrightarrow{\simeq} i^*_eE(k, \Gamma_h)^{h\Ckm}.\]
\end{proposition}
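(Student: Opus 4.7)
The plan is to verify the factorization and then reduce to an isomorphism on $\pi_*$. First I would check that the map $i_e^*\BPG \to i_e^* E(k,\Gamma_h)^{h\Ckm}$ of Theorem~\ref{thm:EquivOrientationCq} factors through $L_{K(h)}(i_e^* D^{-1}\BPG\langle m\rangle)$. The formula for $\pi_*^e\psi$ in Theorem~\ref{thm:EquivOrientationCq} shows that $t_i^{C_{2^n}} \mapsto 0$ for $i > m$, so the map factors through $i_e^*\BPG\langle m\rangle$. Each factor of $D$ is a norm $N_{C_2}^{G}(\bar x)$ whose image in $\pi_*^e E(k,\Gamma_h)$ is a unit by Proposition~\ref{prop:invertHHRelementspreprop} and the arguments used in the proof of Theorem~\ref{thm:IntroInvertingD}; each such factor has degree divisible by $2m$ and is therefore $k^\times[q]$-fixed, so lies in the subring $\pi_*^e E(k,\Gamma_h)^{h\Ckm}$ where it remains a unit. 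Hence the map factors through $i_e^* D^{-1}\BPG\langle m\rangle$. Finally, since $E(k,\Gamma_h)^{h\Ckm}$ is a retract of the $K(h)$-local spectrum $E(k,\Gamma_h)$ by Theorem~\ref{thm:thm4.8}, it is itself $K(h)$-local, and the map factors through $L_{K(h)}$.

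Next I would compute and compare both sides. By Proposition~\ref{prop:computingKr}(1),
\[
\pi_* L_{K(h)}(i_e^* D^{-1}\BPG\langle m\rangle) \cong \bigl(v_h^{-1}\pi_* D^{-1}\BPG\langle m\rangle\bigr)^{\wedge}_{I_h}.
\]
By Proposition~\ref{prop:IhIGBP}, $v_h$ is already a unit (it maps to $t_m^\beta$, and the orbit of $t_m^{C_{2^n}}$ is among the factors of $D$) and $I_h = I_G$. Every remaining factor of $D$ reduces modulo $I_G$ to a unit in the quotient $R/I_G \cong \F_2[t_m^{C_{2^n}}]$ once $t_m^{C_{2^n}}$ is inverted — essentially the content of the proof of Proposition~\ref{prop:m=Ih}, where each such factor was shown to be a ``dominant unit term plus element of $I_G$.'' Consequently these factors are already units in the $I_G$-adic completion and can be dropped, leaving
\[
\pi_* L_{K(h)}(i_e^* D^{-1}\BPG\langle m\rangle) \cong \Z_2[C_{2^n}\cdot t_1^{C_{2^n}}, \ldots, C_{2^n}\cdot t_m^{C_{2^n}}][C_{2^n}\cdot (t_m^{C_{2^n}})^{-1}]^{\wedge}_{I_G}.
\]
When $\F_q \subseteq k$, we have $\alpha = q$ and $u^q = t_m^{C_{2^n}}$, so Proposition~\ref{prop:CqFixedPoint} identifies $\pi_* E(k,\Gamma_h)^{h\Ckm}$ with precisely the same ring (noting $\mathfrak m' = I_G$). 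Theorem~\ref{thm:EquivOrientationCq} shows the induced ring map sends the displayed generators to themselves, so it is the identity, and the map of spectra is a $K(h)$-equivalence.

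The main obstacle is step two: auditing the factors of $D$ assembled in the proof of Theorem~\ref{thm:IntroInvertingD} and verifying each reduces modulo $I_G$ to a unit in $\F_2[C_{2^n}\cdot (t_m^{C_{2^n}})^{\pm 1}]$. The norm factors $N_{C_2}^G(\bar t_{2^{n-i}m}^{C_{2^i}})$ must be unwound through the recursion of Theorem~\ref{theorem:Mform}, which modulo $I_G$ collapses them to monomials in the $\gamma_n^j t_m^{C_{2^n}}$; the additional factors introduced during the induction of Proposition~\ref{prop:m=Ih} were selected precisely so that the dominant term is a power of some $t_{2^{n-i}m}^{C_{2^i}}$ (hence a unit) with the rest in $I_G$. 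Once this bookkeeping is confirmed, the isomorphism of homotopy rings upgrades to the desired equivalence of $K(h)$-local spectra.
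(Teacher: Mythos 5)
Your proof follows essentially the same route as the paper's: factor through $i_e^*\BPG\langle m\rangle$ and then $i_e^*D^{-1}\BPG\langle m\rangle$, pass to $L_{K(h)}$ using $K(h)$-locality of the target, and identify the homotopy of both sides via Propositions~\ref{prop:computingKr}, \ref{prop:IhIGBP} and \ref{prop:CqFixedPoint}, matching generators by the formula in Theorem~\ref{thm:EquivOrientationCq}. One small correction to a justification: the image of $D$ lands in the fixed subring $(\pi_*E(k,\Gamma_h))^{\Ckm}$ not because its degree is divisible by $2m$ (that would not force $q\mid j$ for the relevant powers $u^j$), but because the image of $\pi_*^e\BPG$ consists of polynomials in the elements $C_{2^n}\cdot t_i^{C_{2^n}}$ with $t_m^{C_{2^n}}=u^q$, all of which are visibly $\Gal\ltimes k^{\times}[q]$-invariant.
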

\begin{proof}
Let $E_h = E(k,\Gamma_h)$. The complex orientation $BP \to i^*_eE_h$ is a map of $A_{\infty}$-algebras and therefore so is the map
$i^*_e\BPG \to i^*_e E_h$.
It follows that $ i^*_eE_h$ is a $i^*_e\BPG$-module. Constructing $ i^*_e\BPG  \langle m \rangle $ as the quotient
\[ i^*_e\BPG/( t_{m+1}, \gamma t_{m+1}, \ldots, \gamma^{2^{n-1}-1} t_{m+1},   t_{m+2}, \gamma t_{m+2}, \ldots, \gamma^{2^{n-1}-1} t_{m+2}, \ldots)\]
via a series of cofiber sequences
and noting that $\gamma^i t_k$ maps to zero for $k\geq m+1$, we get a factorization
\[\xymatrix{ i^*_e\BPG \ar[r] \ar[d] & i^*_eE_h \\
 i^*_e\BPG  \langle m \rangle \ar@{.>}[ru] &  }\]
Composing the dotted arrow with the splitting of Theorem~\ref{thm:thm4.8} gives a map
\[ i^*_e\BPG  \langle m \rangle\to  i^*_eE_h^{h\Ckm}.  \]
Since $D$ is mapped to a unit in $\pi_*^eE_h$, this dotted arrow factors as a map
\[ i^*_eD^{-1}\BPG  \langle m \rangle\to  i^*_eE_h^{h\Ckm}.  \]
We apply the functor $L_{K(h)}(-)$ to this map. Since the target is already $K(h)$-local, we obtain a map
\[ \varphi \colon L_{K(h)}(i^*_eD^{-1}\BPG  \langle m \rangle)\to  i^*_eE_h^{h\Ckm}.  \]
It suffices to prove that $\varphi$ induces an isomorphism on homotopy groups.

In Proposition~\ref{prop:computingKr}, we proved that
\begin{align*}
\pi_* L_{K(h)}(i^*_eD^{-1}\BPG \langle m \rangle) \cong (\pi_*^eD^{-1}\BPG \langle m \rangle)^{\wedge}_{I_h}.
\end{align*}
Proposition~\ref{prop:IhIGBP} implies that
$I_h = I_{G}$ in $\pi_*^eD^{-1}\BPG$. By Proposition~\ref{prop:CqFixedPoint}, we have an isomorphism
\[
\pi_*E(k,\Gamma_h)^{h\Ckm} \cong \mathbb{Z}_2 [G \cdot t_1^{G}, \ldots, G \cdot t_{m-1}^{G}, G\cdot t_m][C_{2^m} \cdot (t_m)^{-1}]^{\wedge}_{I_{G}}.
\]
Here, we have used the fact that $\mathfrak{m}'=I_{G}$ if $t_m$ is a unit and that $\F_q \subseteq k$ (so that $\alpha=2^m-1$ in our application of Proposition~\ref{prop:CqFixedPoint}). Furthermore, by design, $\varphi$ maps $t_i$ and its conjugates in $\pi_*^eD^{-1}\BPG\langle m \rangle$ to the same named generators in $ \pi_*^eE(k,\Gamma_h)^{h\Ckm} $. Therefore, $\varphi$ induces an isomorphism on homotopy groups.
\end{proof}

\bibliographystyle{plain}
\bibliography{E-bib}

\end{document}